\newtheorem{prop}{Proposition}[section]
\newtheorem{thm}[prop]{Theorem}
\newtheorem{cor}[prop]{Corollary}
\newtheorem{lem}[prop]{Lemma}
\theoremstyle{definition}
\newtheorem{que}[prop]{Question}
\newtheorem{defn}[prop]{Definition}
\newtheorem{rem}[prop]{\it Remark}
\newtheorem*{claim*}{Claim}
\newcommand{\bP}{\mathbb{P}}
\newcommand{\bC}{\mathbb{C}}
\newcommand{\bR}{\mathbb{R}}
\newcommand{\bA}{\mathbb{A}}
\newcommand{\bQ}{\mathbb{Q}}
\newcommand{\bZ}{\mathbb{Z}}
\newcommand{\bN}{\mathbb{N}}
\newcommand{\bG}{\mathbb{G}}
\newcommand{\tX}{\widetilde{X}}
\newcommand{\tD}{\widetilde{D}}
\newcommand{\cX}{\mathcal{X}}
\newcommand{\cO}{\mathcal{O}}
\newcommand{\cL}{\mathcal{L}}
\newcommand{\cM}{\mathcal{M}}
\newcommand{\cF}{\mathcal{F}}
\newcommand{\cN}{\mathcal{N}}
\newcommand{\cS}{\mathcal{S}}
\newcommand{\cT}{\mathcal{T}}
\newcommand{\cH}{\mathcal{H}}
\newcommand{\fa}{\mathfrak{a}}
\newcommand{\rd}{\mathrm{d}}
\newcommand{\Spec}{\mathrm{Spec}}
\newcommand{\Supp}{\mathrm{Supp}}
\newcommand{\mult}{\mathrm{mult}}
\newcommand{\lct}{\mathrm{lct}}
\newcommand{\vol}{\mathrm{vol}}
\newcommand{\ord}{\mathrm{ord}}
\newcommand{\md}{\frac{\mult_x}{\deg}}
\newcommand{\reg}{\mathrm{reg}}
\newcommand{\Diff}{\mathrm{Diff}}
\newcommand{\codim}{\mathrm{codim}}
\newcommand{\Ext}{\mathrm{Ext}}
\newcommand{\Sym}{\mathrm{Sym}}
\newcommand{\Proj}{\mathrm{Proj}}
\newcommand{\Nklt}{\mathrm{Nklt}}
\newcommand{\Bs}{\mathrm{Bs}}
\newcommand{\Aut}{\mathrm{Aut}}
\newcommand{\Fut}{\mathrm{Fut}}
\newcommand{\Exc}{\mathrm{Exc}}
\newcommand{\hvol}{\widehat{\mathrm{vol}}}
\newcommand{\Val}{\mathrm{Val}}
\newcommand{\gr}{\mathrm{gr}}
\newcommand{\cV}{\mathcal{V}}
\newcommand{\ocX}{\overline{\cX}}
\newcommand{\ocV}{\overline{\cV}}
\newcommand{\ocL}{\overline{\cL}}
\newcommand{\tc}{\mathrm{tc}}
\newcommand{\cG}{\mathcal{G}}
\newcommand{\coker}{\mathrm{coker}}
\newcommand{\Rees}{\mathrm{Rees}}
\newcommand{\initl}{\mathrm{in}}
\numberwithin{equation}{section}
\title{On the sharpness of Tian's criterion for K-stability}
\author{Yuchen Liu}
\address{Department of Mathematics, Yale University, New Haven, CT 06511, USA.}
\email{yuchen.liu@yale.edu}
\author{Ziquan Zhuang}
\address{Department of Mathematics, Massachusetts Institute of Technology, Cambridge, MA 02139, USA.}
\email{ziquan@mit.edu}
\date{\today}
\begin{document}

\maketitle

\begin{abstract}
    Tian's criterion for K-stability states that a Fano variety of dimension $n$ whose alpha invariant is greater than $\frac{n}{n+1}$ is K-stable. We show that this criterion is sharp by constructing $n$-dimensional singular Fano varieties with alpha invariants $\frac{n}{n+1}$ that are not K-polystable for sufficiently large $n$. We also construct K-unstable Fano varieties with alpha invariants $\frac{n-1}{n}$.
\end{abstract}

\section{Introduction}

Given a complex Fano manifold $X$, Tian \cite{Tia87} introduced the \emph{$\alpha$-invariant} $\alpha(X)$ of $X$ which measures the integrability of exponentials of plurisubharmonic functions. In the same paper, Tian proved that $X$ admits a  K\"ahler-Einstein metric if $\alpha(X)>\frac{n}{n+1}$ where $n=\dim X$, known as Tian's criterion. 
In this article, we will use the algebraic interpretation of $\alpha$-invariants in terms of singularities of pairs due to Demailly \cite[Appendix A]{CS08} (see also \cite[Appendix A]{Shi10}). 

\begin{defn}[\cite{Tia87, CS08}]
Let $X$ be a $\bQ$-Fano variety, i.e. $X$ is a normal projective variety over $\bC$, $-K_X$ is $\bQ$-Cartier and ample, and $X$ has klt singularities.
The $\alpha$-invariant $\alpha(X)$ is defined as
\[
\alpha(X):=\inf\{\lct(X;D)\mid D
\textrm{ is an effective $\bQ$-divisor and }D\sim_{\bQ}-K_X\}.
\]
\end{defn}

Tian's criterion \cite{Tia87} has been generalized
and complemented by Demailly-Koll\'ar \cite{DK01}, Odaka-Sano \cite{OS-alpha} and Fujita \cite{Fuj16}. The following theorem summarizes their results in the language of K-stability.

\begin{thm}[\cite{Tia87, DK01, OS-alpha, Fuj16}]\label{thm:tian}
Let $X$ be an $n$-dimensional $\bQ$-Fano variety. 
Then 
\begin{enumerate}
    \item $X$ is K-stable if either $\alpha(X)>\frac{n}{n+1}$ or $\alpha(X)= \frac{n}{n+1}$, $n\ge 2$ and $X$ is smooth;
    \item $X$ is K-semistable if $\alpha(X)\geq \frac{n}{n+1}$.
\end{enumerate}
\end{thm}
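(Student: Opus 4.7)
The plan is to deduce the theorem from the valuative criterion for K-stability via the stability threshold $\delta$ (Fujita--Li, Blum--Jonsson) together with the Fujita--Odaka inequality comparing $\alpha$ and $\delta$. Recall that $X$ is K-semistable iff $\delta(X)\ge 1$, and K-stable iff $\delta(X)>1$, where
\[
\delta(X)=\inf_{E}\frac{A_X(E)}{S_X(E)},\qquad S_X(E)=\frac{1}{(-K_X)^n}\int_0^\infty \vol(-K_X-tE)\,\rd t,
\]
with the infimum ranging over all prime divisors $E$ over $X$.

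I would next establish the Fujita--Odaka inequality $\delta(X)\ge \tfrac{n+1}{n}\alpha(X)$. Fix a prime divisor $E$ over $X$ and set $T_X(E):=\sup\{t\ge 0:-K_X-tE \text{ is big}\}$. A standard volume-concavity estimate yields $S_X(E)\le \tfrac{n}{n+1}T_X(E)$. On the other hand, for any effective $\bQ$-divisor $\Gamma\sim_\bQ -K_X$ one has $\alpha(X)\cdot \ord_E(\Gamma)\le \lct(X;\Gamma)\cdot \ord_E(\Gamma)\le A_X(E)$; letting $\ord_E(\Gamma)$ approach $T_X(E)$ gives $T_X(E)\le A_X(E)/\alpha(X)$. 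Composing the two estimates yields $A_X(E)/S_X(E)\ge \tfrac{n+1}{n}\alpha(X)$. This immediately settles part (2) and the strict-inequality case of (1): $\alpha(X)\ge n/(n+1)$ forces $\delta(X)\ge 1$, and $\alpha(X)>n/(n+1)$ forces $\delta(X)>1$.

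The smooth equality case of (1) is the main obstacle, since Fujita--Odaka only delivers $\delta(X)\ge 1$ when $\alpha(X)=n/(n+1)$. Suppose for contradiction that $X$ is smooth with $n\ge 2$ and $\delta(X)=1$, witnessed (or approximated) by a prime divisor $E$ over $X$ with $A_X(E)=S_X(E)$. Equality throughout the Fujita--Odaka chain then forces $S_X(E)=\tfrac{n}{n+1}T_X(E)$ and the existence of an effective $\bQ$-divisor $\Gamma\sim_\bQ -K_X$ with $\lct(X;\Gamma)=n/(n+1)$ having $E$ as a log canonical place of $\bigl(X,\tfrac{n}{n+1}\Gamma\bigr)$, together with a rigid Zariski-type structure of $-K_X-tE$ on the whole interval $[0,T_X(E)]$. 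I would then invoke Kawamata subadjunction and the Demailly--Kollár multiplier-ideal method on a minimal log canonical center of the strictly log canonical pair $\bigl(X,\tfrac{n}{n+1}\Gamma\bigr)$ to contradict either smoothness or the extremal value $\alpha(X)=n/(n+1)$. The difficulty is genuine: the singular examples constructed later in this paper show that without smoothness the conclusion of (1) fails in the equality case, so any proof must use smoothness essentially — via connectedness of non-klt loci or inversion of adjunction — to exclude this borderline configuration.
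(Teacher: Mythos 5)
You handle the easy parts correctly: the chain $S_X(E)\le\frac{n}{n+1}T_X(E)$ (Okounkov body/Brunn--Minkowski concavity) and $T_X(E)\le A_X(E)/\alpha(X)$ (cf. \cite[Lemma 3.3]{FO16}) gives $\delta(X)\ge\frac{n+1}{n}\alpha(X)$, and with the valuative characterizations ($\delta\ge 1$ iff K-semistable, $\delta>1$ implies K-stable) this yields part (2) and the strict-inequality case of (1). Note that the paper does not prove Theorem \ref{thm:tian} at all --- it quotes it from \cite{Tia87,DK01,OS-alpha,Fuj16} --- so the only meaningful comparison is with the arguments in those references, and your $\delta$-route is a standard modern repackaging of them for these two cases.

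The genuine gap is the smooth equality case $\alpha(X)=\frac{n}{n+1}$, $n\ge 2$, which is exactly the content of \cite{Fuj16} and the only part requiring real work; your text offers a strategy ("Kawamata subadjunction and the Demailly--Koll\'ar multiplier-ideal method on a minimal lc center") but never executes it, and you acknowledge the difficulty yourself. As written it also has attainment problems: $\delta(X)=1$ need not be computed by a prime divisor (one should instead start from a destabilizing special test configuration, i.e.\ a dreamy divisor $F$ with $\beta(F)\le 0$, as in Theorem \ref{thm:beta-criterion}), and since $\alpha$ is an infimum there need not exist an extremal $\Gamma\sim_\bQ-K_X$ with $\lct(X;\Gamma)=\frac{n}{n+1}$ admitting $E$ as an lc place. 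The argument that actually closes this case --- whose structural output the paper records as Theorem \ref{thm:fujalpha} --- is Fujita's equality analysis: $\beta(F)\le 0$ together with $A_X(F)\ge\frac{n}{n+1}\tau(F)$ forces $A_X(F)=n$, $\tau(F)=\epsilon(F)=n+1$, that $\sigma^*(-K_X)-(n+1)F$ is semiample but not big with ample model a generic $\bP^1$-fibration restricting to an isomorphism on $F$, that $\sigma(F)$ is a point $x$, and that $x\in X$ is weakly exceptional with Koll\'ar component $F$ of log discrepancy $n$. Smoothness then enters concretely: over a smooth point the unique Koll\'ar component is the exceptional divisor of the ordinary blowup, and $\epsilon(F)=n+1$ gives $(-K_X)^n=(n+1)^n$, which together with K-semistability (already established) and the optimal volume bound forces $X\cong\bP^n$, contradicting $\alpha(\bP^n)=\frac{1}{n+1}<\frac{n}{n+1}$ for $n\ge 2$. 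None of these steps appear in your sketch, and there is no indication how your multiplier-ideal approach would rule out the borderline configuration, so the equality case of (1) remains unproved.
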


The purpose of this article is to study the sharpness of the assumptions in Theorem \ref{thm:tian}. Our main result goes as follows.

\begin{thm}\label{thm:main}
For any $n\gg 0$ (more precisely, $n=4$ or $n\geq 7$ for $X$, and $n\geq 14$ for $Y$), there exist a hypersurface $X\subset\bP^{n+1}$ of degree $n+1$, and a complete intersection $Y\subset\bP^{n+2}$ of a hyperquadric and a degree $n$ hypersurface, such that the following properties hold:
\begin{enumerate}
    \item Both $X$ and $Y$ are Gorenstein canonical with only one singular point;
    \item $\alpha(X)=\frac{n}{n+1}$ and $X$ is not K-polystable;
    \item $\alpha(Y)=\frac{n-1}{n}$ and $Y$ is K-unstable.
\end{enumerate}
\end{thm}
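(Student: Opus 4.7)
I would construct $X$ and $Y$ explicitly as hypersurfaces / complete intersections of the prescribed degrees with a single designer Gorenstein canonical singularity at a chosen point $p$. A natural ansatz for $X$ is
\[
F=x_0\,g(x_1,\dots,x_{n+1})+h(x_1,\dots,x_{n+1})=0,\quad\deg g=n,\ \deg h=n+1,
\]
with $g,h$ chosen generically so that the common zero locus $\{g=h=0\}\subset\bP^n$ is smooth. Then the unique singular point is $p=[1{:}0{:}\cdots{:}0]$, of multiplicity $n$, with tangent cone $V(g)$; the ordinary blowup $E$ of $p$ resolves the singularity and gives $A_X(E)=1$, so $X$ is Gorenstein canonical. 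A parallel ansatz for a complete intersection of a hyperquadric and a degree-$n$ hypersurface in $\bP^{n+2}$ yields $Y$.

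\medskip

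The upper bounds on $\alpha$ come from writing down a specific hyperplane section $D\sim -K_X$ through $p$---a section tangent to the special direction of the ansatz, equivariant for the natural $\bG_m$-action on $X$---and computing $\lct(X;D)$ by inversion of adjunction on the weighted blowup at $p$; this yields $\lct(X;D)=\tfrac{n}{n+1}$, and the analogous computation on $Y$ yields $\tfrac{n-1}{n}$. To upgrade these to equalities for $\alpha$, I would split by the center of an arbitrary valuation $v$ computing $\alpha$: if the center avoids $p$ then $X$ is smooth there and a degree--multiplicity estimate on pluri-anticanonical divisors forces $A_X(v)/T_X(v)\ge\tfrac{n}{n+1}$; if the center contains $p$, inversion of adjunction on the weighted blowup reduces the estimate to one on the projectivized tangent cone (an explicit Fano), handled via normalized-volume / $\beta$-invariant bounds.

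\medskip

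For the destabilization: the ansatz equips $X$ with a nontrivial $\bG_m$-action fixing $p$, yielding a special test configuration that degenerates $X$ to the projective cone $X_0$ over $V(g)\subset\bP^n$. The matching $\alpha$ bounds force $\beta(E)=A_X(E)-S_X(E)=0$ on the corresponding divisorial valuation, so the generalized Futaki invariant vanishes; since $X_0\not\cong X$, this shows $X$ is strictly K-semistable, hence not K-polystable. For $Y$ the analogous test configuration has $\beta$-invariant equal to a polynomial in $n$, computed by an explicit volume integral on the projective cone, which flips sign and becomes strictly \emph{negative} once $n\ge 14$; this produces the destabilizing special test configuration and yields K-instability.

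\medskip

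\textbf{Main obstacle.} The heart of the argument is the lower bound $\alpha(X)\ge\tfrac{n}{n+1}$: one must rule out \emph{every} effective $\bQ$-divisor $D\sim_\bQ -K_X$ of small lct, not merely the tangent hyperplanes coming from the ansatz. This global control requires either a careful boundedness/multiplicity argument for divisors through the singular point, or an equivariant reduction to the $\bG_m$-fixed locus of $X$ followed by an induction on dimension. The dimensional restrictions $n=4$ or $n\ge 7$ for $X$, and $n\ge 14$ for $Y$, should reflect exactly the range in which these estimates go through and in which the generic member of each chosen family has canonical (not merely log canonical) singularities at $p$.
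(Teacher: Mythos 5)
Your construction of $X$ is the same as the paper's (there $X=(x_0f+g=0)$ with $\deg f=n$, $\deg g=n+1$), and the destabilizing data is also the same: the weighted blow-up at the singular point with $A_X(F)=n$, $\tau(F)=\epsilon(F)=n+1$, $\beta(F)=0$, degenerating $X$ to the cone $X_0$. But the heart of the theorem --- the lower bound $\alpha(X)\ge\frac{n}{n+1}$ (and $\alpha(Y)\ge\frac{n-1}{n}$) --- is exactly where your proposal has no real argument. The step you describe as ``inversion of adjunction on the weighted blowup reduces the estimate to one on the projectivized tangent cone, handled via normalized-volume/$\beta$-invariant bounds'' hides the genuinely hard input: one must prove $\alpha(F,\Delta_F)\ge 1$ for the Koll\'ar component $(F,\Delta_F)\cong(\bP^{n-1},\frac{n}{n+1}S)$, equivalently $\lct(S;|H|_\bQ)\ge\frac{n-1}{n}$ for the degree-$n$ hypersurface $S\subset\bP^{n-1}$ cut out by $f$ (and the analogous Calabi--Yau complete-intersection estimates for $Y$). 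No soft normalized-volume or $\beta$-type bound gives this; it is false for some smooth $S$ (e.g.\ hypersurfaces with an Eckardt point --- this is precisely the gap in the Kudryavtsev/Prokhorov examples noted in the paper), so genericity of $f$ must be exploited through a serious projective-geometry argument. The paper devotes all of Section~\ref{sec:lct-hyp} to this, via Pukhlikov's hypertangent divisors and (strong) P-regularity of general (complete-intersection) hypersurfaces, and this is also the true source of the dimension restrictions $n=4$ or $n\ge 7$ for $X$ and $n\ge 14$ for $Y$. Your explanation of those restrictions is off: for $Y$ one has $\beta(F)=(n+1-e)-\frac{n}{n+1}(n+2-e)<0$ for every $n$ once $e\ge2$, so the bound $n\ge 14$ has nothing to do with a sign flip of $\beta$; it comes from the lct/P-regularity estimates. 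Likewise the singularity is canonical in all dimensions, so that is not where the restriction on $n$ for $X$ comes from either.

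Two further steps need more than you give them. First, even for centers avoiding the singular point, a ``degree--multiplicity estimate'' alone does not yield $\lct\ge\frac{n}{n+1}$: the paper rules out positive-dimensional non-lc centers by Pukhlikov's multiplicity bound $\mult_Z D\le 1$ and then must invoke a nontrivial theorem on isolated non-lc centers of anticanonical $\bQ$-divisors on hypersurfaces to handle point centers in the smooth locus; moreover, for centers through the singular point the paper does not argue directly on $X$ but degenerates $D$ to $X_0$ and runs a case analysis on the coefficient of the divisor at infinity, using Lemma~\ref{lem:lct at wexc sing} (which again needs weak exceptionality). Second, your deduction of K-semistability is not valid as stated: vanishing of the Futaki invariant of a single (non-product) test configuration shows at most that $X$ is not K-stable and not K-polystable, never that it is K-semistable. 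This can be repaired either by first proving $\alpha(X)\ge\frac{n}{n+1}$ and quoting Theorem~\ref{thm:tian}(2), or, as the paper does, by proving the central fiber $X_0=C_p(\bP^{n-1},M)$ is K-polystable (which again rests on $\alpha(\bP^{n-1},\frac{n}{n+1}S)\ge1$ together with the orbifold-cone criterion of Proposition~\ref{prop:kpscone}) and then using openness of K-semistability. Without the Section~\ref{sec:lct-hyp} estimates, both routes are blocked, so the proposal as written does not close.
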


These examples show that the assumptions of Theorem \ref{thm:tian} are almost sharp. More precisely, in Theorem \ref{thm:tian}.1 the smoothness assumption cannot be removed, and the lower bound of $\alpha(X)$ is Theorem \ref{thm:tian}.2 cannot be smaller than $\frac{n-1}{n}$. We remark here that using different methods, K. Fujita constructed a log Fano hyperplane arrangement $(X,\Delta)$ that is not K-polystable and $\alpha(X,\Delta)=\frac{n}{n+1}$ (see \cite[Section 9]{Fuj17}).

Our construction is motivated by the work \cite{Fuj16}. It follows from the argument of \cite{Fuj16} that if $X$ is a $\bQ$-Fano variety of dimension $n$ such that $\alpha(X)=\frac{n}{n+1}$ but $X$ is not K-stable, then $X$ has a weakly exceptional singularity whose corresponding Koll\'ar component has log discrepancy $n$. Candidates of such singularities have been proposed by Kudryavtsev \cite{K-plt-blowup} and Prokhorov \cite{P-plt-blowup}, although both of their constructions contain a gap (see Remark \ref{rem:gap}). Therefore, a large part of our argument is devoted to rectifying their construction, which eventually reduces to estimating the global log canonical thresholds of general hypersurfaces or complete intersections and relies heavily on Pukhlikov's technique of hypertangent divisors (see e.g. \cite[\S 3]{Puk-book}). As a consequence of this analysis, we also prove the following:

\begin{thm} \label{thm:lct-general-hyp}
Fix $\epsilon>0$. Then for $n\gg0$, we have $\lct(X;|H|_\bQ)=1$ where $X\subseteq\bP^{n+1}$ is a general hypersurface of degree $d\le (2-\epsilon)n$ and $H$ is the hyperplane class.
\end{thm}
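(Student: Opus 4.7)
\textit{Proof plan.} The upper bound $\lct(X;|H|_\bQ)\le 1$ is immediate: any smooth hyperplane section of $X$ is an effective divisor in $|H|_\bQ$ with log canonical threshold exactly $1$. For the lower bound, suppose for contradiction that there exist an effective $\bQ$-divisor $D\sim_\bQ H$ and a point $x\in X$ where $(X,D)$ fails to be log canonical. Since $X$ is a general hypersurface it is smooth at $x$, and the standard inequality $\lct_x(D)\ge 1/\mult_xD$ on a smooth variety then forces $\mult_xD>1$.

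To contradict this, I invoke Pukhlikov's hypertangent divisor technique. In affine coordinates centered at $x$, expand the defining polynomial $f=f_1+\cdots+f_d$ by homogeneous degree. For each $k=1,\ldots,d-1$, the identity $(f_1+\cdots+f_k)|_X=-(f_{k+1}+\cdots+f_d)|_X\in\fm_x^{k+1}$ shows that $f_1+\cdots+f_k$ cuts out a hypertangent divisor $T_k\sim kH$ on $X$ with $\mult_xT_k\ge k+1$. Each such $T_k$ is a member of a linear system $\Lambda_k\subset|kH|$ whose dimension is bounded below by $\binom{n+k}{k-1}-1$, via the surjection $H^0(\bP^{n+1},\cO(k))\twoheadrightarrow H^0(X,\cO(k))$ for $k<d$ and the $\binom{n+k}{k}$ linear conditions coming from $\fm_x^{k+1}$.

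Now intersect $D$ with $n-1$ carefully chosen hypertangent divisors. In the main case $d\ge n-3$, take $T_1$, three general members $T_2^{(1)},T_2^{(2)},T_2^{(3)}\in\Lambda_2$, and one representative of each $\Lambda_k$ for $k=3,4,\ldots,n-3$; for smaller $d$, substitute additional general members of $\Lambda_2$ for the unavailable high-order hypertangents. The key technical input, drawn from Pukhlikov's analysis in \cite[\S 3]{Puk-book}, is that for a general $X$ the base loci of the $\Lambda_k$ at $x$ have small codimension in $X$, so that after moving within each linear system and a Bertini argument the chosen divisors together with $D$ form a proper $0$-dimensional intersection $Z$. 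By Bezout and the multiplicativity of multiplicities under proper intersection,
\[
\deg Z=4d(n-3)!\qquad\text{and}\qquad\mult_xZ\ge\mult_xD\cdot 9(n-2)!>9(n-2)!.
\]
Comparing via $\mult_xZ\le\deg Z$ yields $d>\tfrac{9(n-2)}{4}$, contradicting $d\le(2-\epsilon)n$ for $n$ sufficiently large (depending on $\epsilon$).

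The principal obstacle is the control of the base loci of the hypertangent systems $\Lambda_k$ for a general $X$, which is needed to guarantee the proper intersection used above. This is the technical core of Pukhlikov's hypertangent method, and the genericity hypothesis on $X$ enters the argument precisely at this point.
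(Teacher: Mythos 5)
There is a genuine gap, and it sits exactly at the point you flag as "the principal obstacle": the proper $0$-dimensional intersection you want to form does not exist. For a hypersurface there is only one quadratic term $q_2$ in the local expansion, so the second hypertangent system $\Lambda_2$ has base locus $\Bs(\Lambda_2)\supseteq X\cap(q_1=q_2=0)$, of codimension exactly $2$ in $X$ near $x$. Every member of $\Lambda_2$ contains this locus, so your three chosen members $T_2^{(1)},T_2^{(2)},T_2^{(3)}$ intersect in a set of codimension $\le 2$ through $x$, and no Bertini or moving argument can improve this, since the obstruction is the common base locus of the whole system. Consequently $D\cap T_1\cap T_2^{(1)}\cap T_2^{(2)}\cap T_2^{(3)}\cap T_3\cap\cdots\cap T_{n-3}$ always has positive-dimensional components through $x$, and the Bezout comparison $\mult_x Z\le \deg Z$ with $\deg Z=4d(n-3)!$ is not available. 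In Pukhlikov's technique each hypertangent system of a hypersurface can be used only once: one proceeds one irreducible component at a time, and at each step must check that the chosen component (the one with maximal $\md$, which you do not get to choose) is not contained in the base locus of the next system. This is precisely where the paper's notion of $m$-strong P-regularity (Lemma \ref{lem:m-reg}) enters: it makes the base loci $Z_\ell=\Bs(\Lambda_{L,\ell})$ irreducible and reduced with computable $\md$, so that containment can be excluded by comparing ratios (Lemma \ref{lem:lct bound}). Your appeal to "Pukhlikov's analysis of base loci" does not supply a statement that would legitimize reusing $\Lambda_2$.

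The deeper reason you were forced into this illegal move is that your starting input $\mult_x D>1$ is too weak: with each hypertangent system used once, the product of slopes is about $n$, and one only gets bounds of the shape $\lct\ge\min\{1,\tfrac{n}{d}\}$, which fails for $d$ up to $(2-\epsilon)n$. The paper's proof gains the crucial extra factor of $2$ from a refined consequence of non-log-canonicity: on the blow-up of $x$ there is a hyperplane $\Pi$ in the exceptional divisor with $\mult_x(cD)+\mult_\Pi(c\tD)>2$ (\cite[\S 7, Proposition 2.3]{Puk-book}); restricting to a general hyperplane section $L$ whose strict transform contains $\Pi$ converts this into $\md(D\cdot L)>\tfrac{2}{c\deg X}$, and then the chain of hypertangent divisors inside $L$, each system used once and with one small slope dropped (hence the need for $m$-strong P-regularity with $m$ large in terms of $\epsilon$), yields $\lct\ge\min\{1,\tfrac{m+1}{m+2}\cdot\tfrac{2(n-1)}{d}\}\ge 1$. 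Finally, the paper treats $d\le n$ separately by citing Cheltsov's theorem, while the range $n+1\le d\le(2-\epsilon)n$ is handled as above; your argument as written covers neither range correctly.
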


If $X$ is a non-K-stable $\bQ$-Fano variety of dimension $n$ with $\alpha(X)=\frac{n}{n+1}$, then Fujita's characterization \cite{Fuj16} also implies that $X$ specially degenerates to a K-semistable $\bQ$-Fano variety $X_0$ with $\alpha(X_0)=\frac{1}{n+1}$ (see Corollary \ref{cor:polystable degeneration}). Indeed $X_0$ has the smallest $\alpha$-invariant among all K-semistable $\bQ$-Fano varieties by \cite[Theorem 3.5]{FO16}. These $\bQ$-Fano varieties have been studied by C. Jiang \cite{Jia17} where he showed that $\bP^n$ is the only K-semistable Fano manifold with the smallest $\alpha$-invariant. We provide a full characterization of such $\bQ$-Fano varieties to complement Jiang's results (see Theorem \ref{thm:jiang}).

The following result is inspired by the work of Blum and Xu in \cite{BX18} where it is shown that the moduli functor of uniformly K-stable $\bQ$-Fano varieties with fixed dimension and volume is represented by a separated Deligne-Mumford stack of finite type.

\begin{thm}\label{thm:alpha-moduli}
Let $n$ be a positive integer and $V$ be a positive rational number. Then the moduli functor of $\bQ$-Fano varieties $X$ satisfying $\alpha(X)>\frac{1}{2}$ of dimension $n$ and volume $V$ is represented by a separated Deligne-Mumford stack of finite type, which has a coarse moduli space that is a separated algebraic space. In particular, $\Aut(X)$ is finite for a $\bQ$-Fano variety $X$ satisfying $\alpha(X)>\frac{1}{2}$.
\end{thm}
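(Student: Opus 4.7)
\emph{Proof plan.}
My strategy mirrors Blum--Xu \cite{BX18}, replacing their uniform K-stability hypothesis by $\alpha(X)>\tfrac12$. Three ingredients are needed: (i) every $X$ in the moduli has finite automorphism group (making the functor Deligne--Mumford); (ii) the family of such $X$ with fixed dimension $n$ and volume $V$ is bounded (finite type); and (iii) the valuative criterion for separatedness. Given these, the Keel--Mori theorem furnishes the coarse moduli space as a separated algebraic space.

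For (i), I would establish the contrapositive: a nontrivial one-parameter subgroup $\lambda\colon\bG_m\hookrightarrow\Aut(X)$ forces $\alpha(X)\le\tfrac12$. Choose $m$ divisible so that $-mK_X$ is very ample, decompose $H^0(X,-mK_X)=\bigoplus_w V_w$ by $\lambda$-weights, and pick sections $s^\pm$ of maximal and minimal $\lambda$-weight. The Bia\l{}ynicki--Birula limit $\lim_{t\to 0}\lambda(t)\cdot x$ of a generic $x\in X$ lies in a fixed component $Y\subseteq X^\lambda$ contained in both $\Supp\{s^+=0\}$ and $\Supp\{s^-=0\}$; setting $D^\pm:=\tfrac{1}{m}\{s^\pm=0\}\sim_\bQ -K_X$, a direct multiplicity computation along $Y$ yields $\lct\bigl(X;\tfrac12(D^++D^-)\bigr)\le\tfrac12$, so $\alpha(X)\le\tfrac12$.

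For (ii), boundedness follows by combining the uniform lower bound $\alpha>\tfrac12$ with the fixed volume $(-K_X)^n=V$. The bound on $\alpha$ produces, via a general choice of $D\in|-K_X|_\bQ$, a uniform $\epsilon$-lc pair $(X,\tfrac12 D)$, and Birkar's boundedness of $\epsilon$-lc Fanos of fixed dimension and bounded volume then confines all such $X$ to finitely many components of an appropriate Hilbert scheme.

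Step (iii) is the main obstacle. Suppose $\cX_1,\cX_2\to\Delta=\Spec R$ are two $R$-flat families over a DVR with each fiber in the moduli and a generic isomorphism $\phi\colon(\cX_1)_\eta\xrightarrow{\sim}(\cX_2)_\eta$; one must show $\phi$ extends across the closed point. Using the pair $(\cX_1,\cX_2)$ I would form a Rees-type degeneration of $X=(\cX_1)_\eta$ producing a nontrivial special test configuration whenever $(\cX_1)_0\not\cong(\cX_2)_0$; its central fiber $X_0$ would inherit a nontrivial $\bG_m$-action and hence satisfy $\alpha(X_0)\le\tfrac12$ by step (i). Because $\alpha$ is lower semicontinuous in families this alone does not contradict $\alpha(X)>\tfrac12$; the key technical step, which I expect to be the hardest part of the proof, is to \emph{descend} the obstructing divisor from $X_0$ back to $X$. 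Concretely, sections of extremal $\lambda$-weight on the total space should produce, via a weighted averaging, an effective $D\sim_\bQ -K_X$ on the generic fiber $X$ with $\lct(X;D)\le\tfrac12$, directly contradicting $\alpha(X)>\tfrac12$ and forcing $(\cX_1)_0\cong(\cX_2)_0$.
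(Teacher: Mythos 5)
Your high-level skeleton (finite automorphisms $\Rightarrow$ DM, boundedness $\Rightarrow$ finite type, a valuative criterion for separatedness, Keel--Mori for the coarse space) matches the paper, but the two steps you actually try to prove do not go through. In step (i), the central geometric claim is false: a section of extremal weight vanishes on every $\bG_m$-fixed component \emph{except} the one whose fiber weight equals that extremal weight, and the generic Bia\l{}ynicki--Birula limit lies precisely in such an extremal component; so the limit component $Y$ is contained in the support of only one of $s^{+},s^{-}$, not both. Correspondingly the asserted bound $\lct\bigl(X;\tfrac12(D^{+}+D^{-})\bigr)\le\tfrac12$ fails in the simplest examples: for $\bP^{2}$ with a linear $\bG_m$-action one gets $D^{+}=3L_{1}$, $D^{-}=3L_{2}$ for two lines, and the average $\tfrac32(L_{1}+L_{2})$ has lct $\tfrac23>\tfrac12$ (on $\bP^{1}$ the average even has lct $1$). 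A repaired argument using $D^{+}$ alone would need, e.g., that the generic orbit closure $C$ satisfies $(-K_{X}\cdot C)\ge 2$, which is automatic on smooth $X$ but can fail on klt Fano varieties; and in any case your step (i) only treats $\bG_m$, whereas an infinite $\Aut(X)$ may a priori only contain $\bG_a$, which is also needed for the Deligne--Mumford property. Step (iii), which you yourself flag as the hardest part, is not a proof: the ``descent of the obstructing divisor'' from the central fiber of your Rees-type degeneration back to $X$ is exactly the missing content, and since it is supposed to rest on step (i) the gap compounds.

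The paper's route is both simpler and logically reversed. Separatedness is proved directly (Proposition \ref{prop:alpha sum>1}): choose general $D_{1}\in|-mK_{X_{0}}|$, $D_{2}\in|-mK_{Y_{0}}|$ on the two special fibers, extend them over the two families, take strict transforms, and form the convex combination weighted by $\alpha(Y_{0})$ and $\alpha(X_{0})$; because $\alpha(X_{0})+\alpha(Y_{0})>1$ (here both exceed $\tfrac12$) the resulting pair is klt on one special fiber and lc on the other, and the extension lemma of \cite{LWX,BX18} then forces the generic isomorphism to extend --- no test configuration and no descent of divisors is needed. Finiteness of $\Aut(X)$ is then a \emph{consequence} of this separatedness, applied to $X\times\bA^{1}$ reglued over the punctured line by a $\bG_m$- or $\bG_a$-twist; i.e.\ your (i) should be an output of (iii), not an input. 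Two further points: for the finite-type DM stack one also needs the condition $\alpha>\tfrac12$ to be open in families, which the paper takes from \cite{BL18b} and which your plan omits; and boundedness is Jiang's theorem \cite{Jia-boundedness} --- your shortcut ``$\alpha>\tfrac12$ plus a general member gives a uniform $\epsilon$-lc pair'' is not justified, since $\alpha(X)>\tfrac12$ only yields klt-ness of $(X,\tfrac12 D)$ with no uniform $\epsilon$ across the family, so Birkar's theorem cannot be invoked in one line.
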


We ask the following question about the sharpness of Tian's criterion and Jiang's conjecture \cite[Conjecture 1.6]{Jia17}.

\begin{que}
Let $n=\dim X\geq 2$ be an integer. 
\begin{enumerate}
    \item Does there exist a K-semistable $\bQ$-Fano variety $X$ such that $\frac{1}{n+1}<\alpha(X)<\frac{1}{n}$?
    \item Does there exist a K-unstable $\bQ$-Fano variety $X$ such that $\frac{n-1}{n}<\alpha(X)<\frac{n}{n+1}$?
\end{enumerate}
\end{que}

This paper is organized as follows. In Section \ref{sec:prelim} we collect some preliminary materials on K-stability, weakly exceptional singularities, Koll\'ar components, and orbifold cones. In Section \ref{sec:fuj} we recall Fujita's characterization on non-K-stable $\bQ$-Fano varieties $X$ of dimension $n$ with $\alpha(X)=\frac{n}{n+1}$.
In Section \ref{sec:fuj}, we prove Theorem \ref{thm:alpha-moduli}. We also provide a generalization of  \cite[Theorem 1.2]{Jia17} in Theorem \ref{thm:jiang}. In Section \ref{sec:lct-hyp} we estimate the global log canonical thresholds of general hypersurfaces or complete intersections based on Pukhlikov's work \cite{Puk-product, Puk-book, Puk-cpi}. Finally in Section \ref{sec:construction} we present our main constructions to prove Theorem \ref{thm:main}.

\subsection*{Acknowledgement}
This project was initiated to answer a question of Kento Fujita originated from his paper \cite{Fuj16}. The authors would like to thank Kento Fujita for inspiration and fruitful discussions. The authors would like to thank Harold Blum, Ivan Cheltsov, Christopher Hacon, Chen Jiang, J\'anos Koll\'ar, Chi Li, Gang Tian, and Chenyang Xu for helpful discussions and comments. They are also grateful to the anonymous referees for many helpful suggestions and comments. The first author would like to thank Sam Payne for his support during the Fall 2018 semester, constant encouragement, and helpful comments. The second author would also like to thank his advisor J\'anos Koll\'ar for constant support and encouragement. This material is based upon work supported by the National Science Foundation under Grant No. DMS-1440140 while both authors were in residence at the Mathematical Sciences Research Institute in Berkeley, California, during the Spring 2019 semester.

\section{Preliminary}\label{sec:prelim}

\subsection{Notation and conventions}

We work over the complex numbers. We follow the terminologies in \cite{km98}.  A projective variety $X$ is $\bQ$-\emph{Fano} if $X$ has klt singularities and $-K_X$ is ample. A pair $(X,\Delta)$ is \emph{log Fano} if $X$ is projective, $-K_X-\Delta$ is $\bQ$-Cartier ample and $(X,\Delta)$ is klt. If $(X,\Delta)$ is a klt pair, $X$ is projective and $H$ is an ample $\bQ$-divisor on $X$, then the \emph{global log canonical threshold} $\lct(X,\Delta;|H|_\bQ)$ is defined as the largest $t>0$ such that $(X,\Delta+tD)$ is lc for every effective $\bQ$-divisor $D\sim_\bQ H$. If $\Delta=0$, we simply write $\lct(X;|H|_\bQ)$. The \emph{$\alpha$-invariant} of a log Fano pair $(X,\Delta)$ is defined as $\alpha(X,\Delta):=\lct(X,\Delta;|-K_X-\Delta|_{\bQ})$.

\subsection{K-stability}

\begin{defn}[\cite{Tia97, Don02, LX14, Li15, OS15}]
Let $(X,\Delta)$ be an $n$-dimensional log Fano pair. Let $L$ be an ample line bundle on $X$ such that $L\sim_{\bQ}-l (K_X+\Delta)$ for some $l\in\bQ_{>0}$.
\begin{enumerate}
\item A \emph{normal test configuration} $(\cX,\Delta_{\tc};\cL)/\bA^1$ of $(X,\Delta;L)$ consists of the following data:
\begin{itemize}
 \item a normal variety $\cX$, an effective $\bQ$-divisor $\Delta_{\tc}$ on $\cX$, together with a flat projective morphism $\pi:(\cX,\Supp(\Delta_{\tc}))\to \bA^1$;
 \item a $\pi$-ample line bundle $\cL$ on $\cX$;
 \item a $\bG_m$-action on $(\cX,\Delta_{\tc};\cL)$ such that $\pi$ is $\bG_m$-equivariant with respect to the standard action of $\bG_m$ on $\bA^1$ via multiplication;
 \item $(\cX\setminus\cX_0,\Delta_{\tc}|_{\cX\setminus\cX_0};\cL|_{\cX\setminus\cX_0})$
 is $\bG_m$-equivariantly isomorphic to $(X,\Delta;L)\times(\bA^1\setminus\{0\})$.
\end{itemize}

A normal test configuration is called a \emph{product} test configuration if 
\[
(\cX,\Delta_{\tc};\cL)\cong(X\times\bA^1,\Delta\times\bA^1;\mathrm{pr}_1^* L\otimes\mathrm{pr}_2^*\cO_{\bA^1}(k\cdot 0))
\]
for some $k\in\bZ$. A product test configuration is called a \emph{trivial} test configuration if the above isomorphism is $\bG_m$-equivariant with respect to the trivial $\bG_m$-action on $X$ and the standard $\bG_m$-action on $\bA^1$ via multiplication.

A normal test configuration $(\cX,\Delta_{\tc};\cL)$ is called a \emph{special test configuration} if $\cL\sim_{\bQ}-l (K_{\cX/\bA^1}+\Delta_{\tc})$ and $(\cX,\cX_0+\Delta_{\tc})$ is plt. In this case, we say that $(X,\Delta)$ \emph{specially degenerates to} $(\cX_0,\Delta_{\tc,0})$ which is necessarily a log Fano pair.

\item Assume $\pi:(\cX,\Delta_{\tc};\cL)\to \bA^1$ is a normal test configuration of 
$(X,\Delta;L)$. Let $\bar{\pi}: (\ocX,\overline{\Delta}_{\tc};\ocL)\to\bP^1$ be the natural $\bG_m$-equivariant compactification of $\pi$. The \emph{generalized Futaki invariant} of $(\cX,\Delta_{\tc};\cL)$ is defined by the intersection formula
\[
\Fut(\cX,\Delta_{\tc};\cL):=\frac{1}{(-K_X-\Delta)^n}\left(\frac{n}{n+1}\cdot\frac{(\ocL^{n+1})}{l^{n+1}}+\frac{(\ocL^n\cdot (K_{\ocX/\bP^1}+\overline{\Delta}_{\tc}))}{l^n}\right).
\]
\item \begin{itemize}
 \item The log Fano pair $(X,\Delta)$ is said to be \emph{K-semistable} if $\Fut(\cX,\Delta_{\tc};\cL)\geq 0$ for any normal test configuration $(\cX,\Delta_{\tc};\cL)/\bA^1$ and any $l\in\bQ_{>0}$ such that $L$ is Cartier. 
 \item The log Fano pair $(X,\Delta)$ is said to be \emph{K-stable} if it is K-semistable and $\Fut(\cX,\Delta_{\tc};\cL)=0$ for a normal test configuration $(\cX,\Delta_{\tc};\cL)/\bA^1$ if and only if it is a trivial test configuration.
 \item The log Fano pair $(X,\Delta)$ is said to be \emph{K-polystable} if it is K-semistable and $\Fut(\cX,\Delta_{\tc};\cL)=0$ for a normal test configuration $(\cX,\Delta_{\tc};\cL)/\bA^1$ if and only if it is a product test configuration.
 \item The log Fano pair $(X,\Delta)$ is said to be \emph{strictly K-semistable} if it is K-semistable but not K-polystable.
 \item The log Fano pair $(X,\Delta)$ is said to be \emph{K-unstable} if it is not K-semistable.
\end{itemize}
\end{enumerate}
\end{defn}

 By the work of Li and Xu \cite{LX14}, to test K-(poly/semi)stability of a log Fano pair $(X,\Delta)$ it suffices to test on special test configurations (see \cite[Section 6]{Fujita-valuative-criterion} for precise statements).
\medskip 

Next we recall the valuative criterion of K-stability due to K. Fujita \cite{Fuj17} and C. Li \cite{Li-minimize}.

\begin{defn}[{\cite[Definition 1.1 and 1.3]{Fujita-valuative-criterion}}] \label{defn:threshold and beta}
Let $X$ be a $\bQ$-Fano variety of dimension $n$. Let $F$ be a prime divisor over $X$, i.e., there exists a projective birational morphism $\pi: Y\to X$ with $Y$ normal such that $F$ is a prime divisor on $Y$.
    \begin{enumerate}
        \item For any $t\ge 0$, we define $\vol_X(-K_X-tF):=\vol_Y(-\pi^*K_X-tF)$.
        \item The \emph{pseudo-effective threshold} $\tau(F)$ of $F$ with respect to $-K_X$ is defined as
        \[\tau(F):=\sup\{\tau>0\,|\, \vol_X(-K_X-\tau F)>0\}.\]
        \item Let $A_X(F)$ be the log discrepancy of $F$ with respect to $X$. We set
        \[\beta(F):=A_X(F)\cdot((-K_X)^n)-\int_0^{\tau(F)}\vol_X(-K_X-tF)\mathrm{d}t.\]
        \item The prime divisor $F$ over $X$ is said to be \emph{dreamy} if the $\bZ_{\geq 0}^2$-graded algebra \[\bigoplus_{k,j\in\bZ_{\geq 0}}H^0(X,\cO_X(-klK_X-jF))\] is finitely generated for some $l\in\bZ_{>0}$ with $-lK_X$ Cartier. Note that this definition does not depend on the choice of $l$. 
    \end{enumerate}
\end{defn}

The following theorem summarizes results from \cite[Theorems 1.3 and 1.4]{Fujita-valuative-criterion}, \cite[Theorem 3.7]{Li-minimize}, and \cite[Corollary 4.3]{BX18}.

\begin{thm}[\cite{Fujita-valuative-criterion, Li-minimize, BX18}] \label{thm:beta-criterion}
Let $X$ be a $\bQ$-Fano variety. Then the following are equivalent:
\begin{enumerate}
    \item $X$ is K-stable $($resp.\ K-semistable$)$;
    \item $\beta(F)>0$ $($resp.\ $\beta(F)\geq 0)$ holds for every prime divisor $F$ over $X$;
    \item $\beta(F)>0$ $($resp.\ $\beta(F)\geq 0)$ holds for every dreamy prime divisor $F$ over $X$. 
\end{enumerate}
\end{thm}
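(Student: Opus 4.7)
The plan is to establish (2) $\Rightarrow$ (3) trivially, (1) $\Leftrightarrow$ (2) via the Fujita--Li valuative criterion, and (3) $\Rightarrow$ (1) by combining the Li--Xu reduction to special test configurations with the observation of Blum--Xu that the divisorial valuations produced by such test configurations are Koll\'ar components, hence dreamy. The implication (2) $\Rightarrow$ (3) is immediate, since every dreamy prime divisor is in particular a prime divisor over $X$.

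For (1) $\Rightarrow$ (2), fix a prime divisor $F$ over $X$ and choose $r \in \bZ_{>0}$ so that $-rK_X$ is Cartier. Form the decreasing $\bN$-filtration $\cF$ on the section ring $R_\bullet := \bigoplus_m H^0(X, -mrK_X)$ defined by $\ord_F$, and approximate $\cF$ by its finitely generated truncations (Witt Nystr\"om); each truncation produces a normal test configuration $(\cX_N, \cL_N)/\bA^1$ of $(X, -rK_X)$. An intersection-theoretic computation as in \cite[\S 5]{Fujita-valuative-criterion} and \cite[\S 3]{Li-minimize} expresses the generalized Futaki invariants of $(\cX_N, \cL_N)$ (up to a fixed positive normalization) as Duistermaat--Heckman integrals, whose limit as $N \to \infty$ equals $\beta(F)/(-K_X)^n$. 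K-semistability of $X$ therefore forces $\beta(F) \ge 0$. For the K-stability case, if $\beta(F) = 0$ for some non-trivial $F$ then one constructs via MMP a non-trivial special test configuration with zero Futaki invariant, contradicting K-stability.

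For (3) $\Rightarrow$ (1), by \cite{LX14} it suffices to test the sign of $\Fut$ on normal special test configurations $(\cX, \cL)/\bA^1$. Given such a test configuration, the plt hypothesis on $(\cX, \cX_0)$ implies that the divisorial valuation $F := \ord_{\cX_0}$, restricted to the function field of $X$, is a Koll\'ar component over $X$. The $\bG_m$-equivariant coordinate ring of $(\cX, \cL)$ is isomorphic to the Rees-type ring
\[
\bigoplus_{m, j \ge 0} \cF^j H^0(X, -mrK_X)
\]
associated with $F$, which is of finite type since it arises from a quasi-projective family; thus $F$ is dreamy. A direct intersection calculation then yields $\Fut(\cX, \cL) = c \cdot \beta(F)$ for a positive constant $c$ depending only on normalizations, so assumption (3) gives the required sign.

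The main obstacle lies in the strict-inequality half of the K-stability case: one must ensure that $\beta(F) > 0$ on every non-trivial dreamy $F$ translates to $\Fut > 0$ on every non-product special test configuration. This reduces to checking that non-triviality of the test configuration forces non-triviality of the associated Koll\'ar component, which is precisely the content of Blum--Xu's \cite[Corollary 4.3]{BX18}; without this promotion one would only obtain K-semistability in the (3) $\Rightarrow$ (1) direction.
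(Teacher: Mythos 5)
First, note that the paper does not prove this statement at all: it is quoted as a summary of \cite[Theorems 1.3 and 1.4]{Fujita-valuative-criterion}, \cite[Theorem 3.7]{Li-minimize} and \cite[Corollary 4.3]{BX18}, so there is no internal proof to compare against. Judged on its own terms, your sketch follows the standard route of those references for (2)$\Rightarrow$(3), for the semistable half of (1)$\Leftrightarrow$(2) (truncated filtrations, Duistermaat--Heckman limits), and for (3)$\Rightarrow$(1) (Li--Xu reduction to special test configurations, the Rees-algebra correspondence, and $\Fut=c\cdot\beta(F)$), and to that extent it is fine.

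The genuine problem is where you locate the input from \cite{BX18}, and it hides a real gap. In your (1)$\Rightarrow$(2) argument for K-stability you assert that from an arbitrary prime divisor $F$ over $X$ with $\beta(F)=0$ ``one constructs via MMP a non-trivial special test configuration with zero Futaki invariant.'' This construction is not available for a general $F$: the MMP machinery of Li--Xu (and Fujita's correspondence) starts from a test configuration or from a \emph{dreamy} divisor, i.e.\ it needs finite generation of $\bigoplus_{m,j}H^0(X,\cO_X(-mrK_X-jF))$, which is exactly what is unknown for an arbitrary $F$. The content of \cite[Corollary 4.3]{BX18} is precisely this promotion: on a K-semistable $X$, a prime divisor with $\beta(F)=0$ is automatically dreamy (indeed induces a special degeneration), so that the dreamy hypothesis in (2) can be dropped. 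By contrast, the step you attribute to \cite{BX18} in your closing paragraph --- that a non-trivial special test configuration induces a non-trivial (dreamy) divisorial valuation with $\Fut$ a positive multiple of $\beta$ --- is already part of Fujita's correspondence (see \cite[Section 6]{Fujita-valuative-criterion}) and is not where Blum--Xu enter; also, for K-stability as defined in this paper you must get $\Fut>0$ on all non-\emph{trivial} special test configurations, not merely non-product ones, as your phrasing suggests. So the architecture is right, but the key non-formal step (handling non-dreamy divisors with $\beta(F)=0$) is asserted rather than proved, and the citation meant to cover it is attached to the wrong implication. A minor additional slip: calling the restriction of $\ord_{\cX_0}$ a ``Koll\'ar component over $X$'' is not the right notion; it is a dreamy prime divisor over $X$, Koll\'ar components being a local notion attached to a klt singularity.
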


\subsection{Weakly exceptional singularities and Koll\'ar components}

\begin{defn}
Let $x\in X$ be a klt singularity. We say that a proper birational morphism $\sigma:Y\to X$ provides a \emph{Koll\'ar component} $F$, if $\sigma$ is an isomorphism over $X\setminus\{x\}$ and $\sigma^{-1}(x)$ is a prime divisor $F$ on $Y$ such that $(Y,F)$ is plt and $-F$ is $\bQ$-Cartier $\sigma$-ample. We denote by $\Delta_F$ the different $\bQ$-divisor on $F$, i.e. $K_F+\Delta_F=(K_Y+F)|_F$.
\end{defn}

\begin{defn}
Let $X$ be a $\bQ$-Fano variety. Let $x\in X$ be a closed point. Suppose $\sigma:Y\to X$ provides a Koll\'ar component $F$ over $x\in X$. Then the \emph{Seshadri constant} $\epsilon(F)$ of $F$ with respect to $-K_X$ is defined as
\[
\epsilon(F):=\sup\{\epsilon>0\,|\, \sigma^*(-K_X)-\epsilon F\textrm{ is ample}\}.
\]
\end{defn}

The notion of weakly exceptional singularities in the sense of Shokurov is crucial in our construction.

\begin{defn}[{\cite[Definition 4.1]{Pro00}}]
A klt singularity $x\in X$ is said to be \emph{weakly exceptional} if there exists only one Koll\'ar component over it.
\end{defn}

The following criterion connects weakly exceptional singularities with $\alpha$-invariant of Koll\'ar components. Its present form first appeared in \cite[Theorem 3.10]{CS11} which essentially follows from {\cite[Theorem 4.3]{Pro00} and \cite[Theorem 2.1]{K-plt-blowup}}.

\begin{thm}[{\cite[Theorem 3.10]{CS11}}] \label{thm:alpha and weakly exceptional}
A klt singularity $x\in X$ is weakly exceptional if and only if there exists a Koll\'ar component $F$ over $x\in X$ satisfying $\alpha(F,\Delta_F)\geq 1$. 
\end{thm}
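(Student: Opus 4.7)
The plan is to set up a dictionary between effective $\bQ$-divisors $D_F\sim_\bQ -(K_F+\Delta_F)$ on $F$ and effective $\bQ$-divisors on $X$ for which $F$ is an lc place. The key computation is adjunction: if $\sigma\colon Y\to X$ extracts a Koll\'ar component $F$, then $K_Y+F=\sigma^*K_X+A_X(F)\cdot F$, so restricting to $F$ yields $K_F+\Delta_F=A_X(F)\cdot F|_F$ and hence $-(K_F+\Delta_F)\sim_\bQ -A_X(F)\cdot F|_F$. Combined with Kawakita's inversion of adjunction, this computation should say that $\alpha(F,\Delta_F)\ge 1$ precisely controls whether $F$ can be the unique divisorial valuation over $x$ that serves as an lc place for boundaries on $X$ centered at $x$.

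For the direction $(\Leftarrow)$, I assume some Koll\'ar component $F$ satisfies $\alpha(F,\Delta_F)\ge 1$ and suppose for contradiction that a distinct Koll\'ar component $F'$ over $x$ exists. Since $F'$ is centered at $x$, its center on $Y$ lies in $F$. Using the plt blow-up extracting $F'$, I would construct an effective $\bQ$-divisor $D'\sim_\bQ -K_X$ on $X$ with $\mathrm{ord}_{F'}(D')=A_X(F')$ (for instance by averaging general sections of $|-mK_X|$ that vanish maximally along $F'$). Pulling back to $Y$, subtracting $\mathrm{ord}_F(\sigma^*D')\cdot F$, and restricting to $F$ should yield, via the adjunction dictionary above, an effective $\bQ$-divisor $D'_F\sim_\bQ -(K_F+\Delta_F)$ such that $(F,\Delta_F+D'_F)$ is not lc along the center of $F'$ on $F$. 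This contradicts $\alpha(F,\Delta_F)\ge 1$.

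For the direction $(\Rightarrow)$, I assume $x\in X$ is weakly exceptional with unique Koll\'ar component $F$, and suppose for contradiction that some effective $D_F\sim_\bQ -(K_F+\Delta_F)$ makes $(F,\Delta_F+D_F)$ non-lc. Inverting the dictionary, I would extend $D_F$ to an effective $\bQ$-divisor $B$ on $Y$ with $B|_F=D_F$ (using the $\sigma$-ampleness of $-F$ together with Kodaira-type vanishing on $Y$), then adjust by a suitable combination of $\sigma^*(-K_X)$ and $F$ so that the pushforward $\sigma_*B$ is $\bQ$-linearly equivalent to a multiple of $-K_X$. By inversion of adjunction the resulting pair on $X$ would admit a non-lc place $F''$ centered at $x$ and distinct from $F$, and the standard MMP-based construction that extracts a single divisor from a minimal non-lc place would produce a Koll\'ar component $F'\ne F$ over $x$, contradicting weak exceptionality.

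The main obstacle is controlling the extension of divisors across the $F$-vs-$X$ dictionary. In direction $(\Leftarrow)$ one must guarantee that $\mathrm{ord}_F(\sigma^*D')$ is precisely what is needed for the restriction to $F$ to land in the anticanonical class of $(F,\Delta_F)$, and that inversion of adjunction truly transfers the non-lc behavior of $F'$ at its center to a non-lc statement for $D'_F$ on $F$. In direction $(\Rightarrow)$ one must extend $D_F$ to $Y$ and then descend to $X$ without creating spurious lc issues away from $F$ or destroying the non-lc place produced by $D_F$, and one must ensure the MMP-based extraction genuinely produces a Koll\'ar component (rather than a non-normal place). Both steps rely on a careful combination of $\sigma$-ampleness of $-F$, the $\bQ$-Fano structure of $(F,\Delta_F)$ inherited from adjunction, and the extraction of Koll\'ar components from lc places via MMP arguments.
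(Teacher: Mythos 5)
The paper does not contain a proof of this statement: it is quoted from Cheltsov--Shramov (Theorem 3.10 of \cite{CS11}), which in turn follows Prokhorov and Kudryavtsev, and your outline follows the same standard circle of ideas as those sources (the adjunction $K_F+\Delta_F\sim_\bQ A_X(F)\,F|_F$, transferring boundaries across the plt blow-up, inversion of adjunction, extraction of Koll\'ar components from lc places). So the strategy is right, but as written the sketch has a genuine gap in the direction ``$\alpha(F,\Delta_F)\ge 1\Rightarrow$ weakly exceptional'', plus a setting error that runs through both directions.

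The setting error first: the theorem concerns a germ of a klt singularity, so there is no ample $-K_X$ and no linear system $|-mK_X|$; locally every $\bQ$-divisor is $\bQ$-linearly trivial, so the normalization ``$D'\sim_\bQ -K_X$'' carries no information. The auxiliary divisor must be built from the valuation ideals $\fa_m=\{f\in\cO_{X,x}\mid\ord_{F'}(f)\ge m\}$, say $D'=\frac{A_X(F')}{m}\mathrm{div}(f)$ for a general $f\in\fa_m$ with $m\gg 0$ sufficiently divisible; the point of this specific choice is that the crepant pullback of $(X,D')$ to $Y'$ is $(Y',F'+\tfrac{A_X(F')}{m}\widetilde{B})$ with $\widetilde{B}$ a general member of a $\sigma'$-free system, hence plt. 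This plt-ness is exactly what your sketch is missing. Your normalization $\ord_{F'}(D')=A_X(F')$ controls nothing about $\ord_F(D')$, and the class of the restricted divisor is $\ord_F(D')\cdot(-F|_F)$, which equals $-(K_F+\Delta_F)\sim_\bQ A_X(F)(-F|_F)$ only when $\ord_F(D')=A_X(F)$. Moreover the two things you need pull against each other: to have the restriction dominated by an anticanonical divisor of $(F,\Delta_F)$ you need $\ord_F(D')\le A_X(F)$, i.e.\ $a(F;X,D')\ge -1$; while to transfer genuine non-lc-ness to $F$ via Kawakita you need $(Y,F+B)$ to be strictly non-lc along $c_Y(F')$, and since $a(F';X,D')=-1$ and $\ord_{F'}(F)>0$, this happens precisely when $a(F;X,D')>-1$ \emph{strictly}. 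In the borderline case $\ord_F(D')=A_X(F)$ one only obtains an lc, non-klt pair $(F,\Delta_F+B|_F)$ with $B|_F\sim_\bQ-(K_F+\Delta_F)$, which is perfectly compatible with $\alpha(F,\Delta_F)\ge 1$ and yields no contradiction; so your claim that the restriction is ``not lc along the center of $F'$'' is unjustified as stated. The plt choice above resolves this: plt-ness of the pullback to $Y'$ makes $F'$ the unique lc place of $(X,D')$, hence $a(F;X,D')>-1$, i.e.\ $\mu:=\ord_F(D')<A_X(F)$; then $B|_F\sim_\bQ\mu(-F|_F)$, and adding an effective general representative of $(A_X(F)-\mu)(-F|_F)$ produces $D'_F\sim_\bQ-(K_F+\Delta_F)$ with $(F,\Delta_F+D'_F)$ non-lc, the desired contradiction with $\alpha(F,\Delta_F)\ge1$.

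The other direction is essentially right in outline, but the details you defer are the actual content: lift $mD_F$ to a section of $\cO_Y(-mA_X(F)F)$ using relative Kawamata--Viehweg vanishing (as in Proposition \ref{prop:kcdeg} of the paper), add a general section of $\cO_Y(-(mA_X(F)+1)F)$ so that $(Y,F+B)$ is klt away from $F$ and the non-lc locus of $(X,\sigma_*B)$ is $\{x\}$ (no adjustment by $\sigma^*(-K_X)$ is needed locally: the negativity lemma gives $\sigma^*\sigma_*B=B+A_X(F)F$ automatically); then take $c=\lct_x(X;\sigma_*B)<1$, note that $a(F;X,c\sigma_*B)=(1-c)A_X(F)-1>-1$ so $F$ is not an lc place of the scaled pair, and extract a Koll\'ar component from an lc place centered at $x$ by tie-breaking and an MMP argument. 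The strict inequality $c<1$ is what guarantees the extracted component differs from $F$; without recording it, ``$F''\neq F$'' is again an assertion rather than a proof.
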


\subsection{Orbifold cones}

\begin{defn}
Let $V$ be a normal projective variety. Let $M$ be an ample $\bQ$-Cartier $\bQ$-divisor on $V$. 
\begin{enumerate}
    \item The \emph{affine orbifold cone} $C_a(V,M)$ is defined as
    \[
    C_a(V,M):=\Spec\bigoplus_{m=0}^\infty H^0(V,\cO_{V}(\lfloor mM \rfloor)).
    \]
    \item The \emph{projective orbifold cone} $C_p(V,M)$ is defined as
    \[
    C_p(V,M):=\Proj\bigoplus_{m=0}^\infty\bigoplus_{i=0}^\infty H^0(V,\cO_{V}(\lfloor mM \rfloor)\cdot s^i,
    \]
    where the grading of $H^0(V,\cO_{V}(\lfloor mM \rfloor))$ and $s$ are $m$ and $1$, respectively.
\end{enumerate}
\end{defn}

For the projective orbifold cone $C_p(V,M)$, we denote the $\bQ$-Cartier divisor corresponding to $(s=0)$ by $V_\infty$. If $\{M\}=\sum_{i=1}^k \frac{a_i}{b_i}M_i$ for some prime divisors $M_i$ on $V$ and $0<a_i<b_i$ coprime integers, we denote 
$\Delta_M:=\sum_{i=1}^k\frac{b_i-1}{b_i}M_i$. The pair $(V_\infty, \Diff_{V_\infty}(0))$ obtained by taking adjunction of the pair $(C_p(V,M), V_\infty)$ is isomorphic to $(V, \Delta_M)$. 

Let us illustrate some basic properties of orbifold cones from \cite{Kol04} and \cite[Section 3.3.1]{LL19}.
\begin{prop}
Let $V$ be a normal projective variety. Let $M$ be an ample $\bQ$-Cartier divisor on $V$.
\begin{enumerate}
    \item Both $C_a(V,M)$ and $C_p(V,M)$ are normal.
    \item The following conditions are equivalent:
    \begin{enumerate}
        \item $C_a(V,M)$ is klt;
        \item $(C_p(V,M),V_\infty)$ is plt;
        \item $(V,\Delta_M)$ is a log Fano pair, and $M\sim_{\bQ}-r^{-1}(K_V+\Delta_M)$ for some $r\in\bQ_{>0}$.
    \end{enumerate}
    \item If the conditions in $(2)$ are satisfied, then $K_{C_p(V,M)}\sim_{\bQ}-(1+r)V_\infty$.
\end{enumerate}

\end{prop}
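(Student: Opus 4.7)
The strategy is to exploit the $\bG_m$-equivariant cone structure and reduce everything to standard adjunction and inversion-of-adjunction computations along the divisor $V_\infty\subset C_p(V,M)$ and along the exceptional divisor of the blowup of the vertex $o\in C_a(V,M)$.

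For part (1), the section ring $R:=\bigoplus_{m\ge 0} H^0(V, \cO_V(\lfloor mM \rfloor))$ is an integrally closed graded domain. This follows from Demazure's description of normal affine varieties with good $\bG_m$-action: each graded piece is a reflexive sheaf on the normal variety $V$, which supplies the $S_2$ condition globally, while $R_1$ holds away from the vertex since $\Spec R\setminus\{o\}\to V$ is a Seifert $\bG_m$-bundle over a normal base. Normality of $C_a(V,M)=\Spec R$ is then immediate, and normality of $C_p(V,M)=\Proj R[s]$ follows by checking each standard affine chart.

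For part (2), the essential input is the identification of the different along $V_\infty$:
\[
\Diff_{V_\infty}(0)=\Delta_M=\sum_i \frac{b_i-1}{b_i}M_i,
\]
computed from the local cyclic quotient structure of $C_p(V,M)$ transverse to $V_\infty$ at the prime divisors $M_i$ where $M$ has fractional coefficient $a_i/b_i$. I would similarly compute the different along the exceptional divisor $E$ of the blowup of the vertex in $C_a(V,M)$ and identify $(E,\Diff_E(0))$ with $(V,\Delta_M)$. Granting these parallel computations, (c)$\Rightarrow$(a) and (c)$\Rightarrow$(b) follow from inversion of adjunction applied to $(\widetilde{C}_a, E)$ and $(C_p(V,M), V_\infty)$ respectively, together with $\bG_m$-equivariance to propagate plt/klt across the full cones; the converses follow from ordinary adjunction, which produces $(V,\Delta_M)$ klt, while the proportionality $M\sim_\bQ -r^{-1}(K_V+\Delta_M)$ drops out of the canonical class formula of part (3).

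For part (3), assume (c). The class $K_{C_p(V,M)}$ is $\bG_m$-invariant and hence $\bQ$-linearly equivalent to $\lambda V_\infty$ for some $\lambda\in\bQ$. To determine $\lambda$, I restrict to $V_\infty$ and apply adjunction, using the tautological self-intersection $V_\infty|_{V_\infty}\sim_\bQ M$ supplied by the cone construction together with the hypothesis $K_V+\Delta_M\sim_\bQ -rM$ from (c):
\[
(\lambda+1)M\sim_\bQ (K_{C_p(V,M)}+V_\infty)\big|_{V_\infty}=K_V+\Delta_M\sim_\bQ -rM,
\]
yielding $\lambda=-(1+r)$, as required.

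The main technical content is the different computation in part (2): once the fractional coefficients $(b_i-1)/b_i$ are pinned down from the local model of a cyclic $\mu_{b_i}$-quotient singularity transverse to $M_i$, every remaining implication reduces to a short application of adjunction, inversion of adjunction, or $\bG_m$-equivariance. A minor additional care is required to propagate plt from a neighborhood of $V_\infty$ across the vertex $o\in C_a(V,M)$, but this is handled uniformly by the parallel blowup computation on the affine cone.
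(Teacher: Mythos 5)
Your overall architecture (normality via Demazure/Seifert bundles, the identification of the different along $V_\infty$ and along the exceptional divisor over the vertex with $\Delta_M$, adjunction and inversion of adjunction, plus a $\bG_m$-propagation argument) is the standard one; note the paper does not prove this proposition but cites it to Koll\'ar's Seifert $\bG_m$-bundle paper and to \cite[Section 3.3.1]{LL19}, and parts (1) and the forward implications (c)$\Rightarrow$(a),(b) are fine in your outline. However, there are two genuine gaps at the load-bearing steps. First, in part (3) the justification ``$K_{C_p(V,M)}$ is $\bG_m$-invariant and hence $\bQ$-linearly equivalent to $\lambda V_\infty$'' proves nothing: every Weil divisor class is invariant under the connected group $\bG_m$, and on a cone over, say, $V=\bP^1\times\bP^1$ with $M=\cO(1,1)$ there are plenty of invariant classes not proportional to $V_\infty$. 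What you actually need is the canonical class formula for the Seifert $\bG_m$-bundle $Y^*=C_a(V,M)\setminus\{o\}\to V$, namely $K_{Y^*}\sim_{\bQ}\pi^*(K_V+\Delta_M)$, combined with $\pi^*M\sim_{\bQ}0$ (the tautological trivialization); only under hypothesis (c) does this make $K_{C_p}$ restrict to a $\bQ$-trivial class on $Y^*$, whence $K_{C_p}\sim_{\bQ}\lambda V_\infty$ because the kernel of restriction $\mathrm{Cl}(C_p)\to\mathrm{Cl}(Y^*)$ is generated by $V_\infty$.

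Second, and more seriously, your converse directions (a)$\Rightarrow$(c) and (b)$\Rightarrow$(c) are circular as written: you extract the proportionality $K_V+\Delta_M\sim_{\bQ}-rM$ from ``the canonical class formula of part (3),'' but part (3) assumes (c). The proportionality is precisely part of what must be proved, and it does not follow from adjunction alone. The missing ingredient is the $\bQ$-Cartier criterion at the vertex of an orbifold cone: a $\bG_m$-invariant Weil divisor class on $C_a(V,M)$ (equivalently, the corresponding class $D$ on $V$ via the Seifert bundle) is $\bQ$-Cartier at $o$ if and only if $D\sim_{\bQ}cM$ for some $c\in\bQ$. Applying this to $K_{C_a}$, whose class on $V$ is $K_V+\Delta_M$ by the Seifert canonical class formula, the klt (resp.\ plt) hypothesis forces $K_V+\Delta_M\sim_{\bQ}-rM$ for some $r\in\bQ$. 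You must then also explain why $r>0$, i.e.\ why $(V,\Delta_M)$ is log \emph{Fano} and not merely klt with some proportionality: this comes from the discrepancy computation on the partial resolution extracting the orbifold exceptional divisor $E\cong V$ over the vertex, where the log discrepancy equals $r$, so klt-ness at $o$ forces $r>0$ and hence $-(K_V+\Delta_M)\sim_{\bQ}rM$ ample. You mention this blowup but never use it for either of these two purposes; without them, the equivalence in (2) is not established.
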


The following proposition has appeared in \cite[Section 2.4]{LX16}. Here we provide a proof without using stack constructions.

\begin{prop}\label{prop:kcdeg}
Let $x\in (X,\Delta)$ be a klt singularity. Suppose $\sigma:Y\to X$ provides a Koll\'ar component $F$ over $x$.
Denote by $\fa_j:=\{f\in\cO_{X,x}\mid \ord_F(f)\geq j\}$ for $j\in\bZ$. Then we have an isomorphism of graded rings
\[
 \bigoplus_{j=0}^\infty \fa_j/\fa_{j+1}\cong \bigoplus_{j=0}^\infty H^0(F,\cO_F(\lfloor -jF|_F \rfloor)),
 \]
where $F|_F$ is the $\bQ$-divisor class on $F$ defined in \cite[Definition A.4]{HLS}.
\end{prop}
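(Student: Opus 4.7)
The plan is to derive the isomorphism from the short exact sequence of coherent sheaves
\[
0 \to \cO_Y(-(j+1)F) \to \cO_Y(-jF) \to Q_j \to 0
\]
on $Y$, where $Q_j$ is a sheaf supported on $F$, by pushing forward along $\sigma$ and combining this with a higher direct image vanishing and a local identification of $Q_j$.

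First, I would record the elementary identification $\fa_j = (\sigma_* \cO_Y(-jF))_x$: since $\sigma$ is an isomorphism outside $\{x\}$ with $F$ the unique $\sigma$-exceptional divisor, a regular function $f \in \cO_{X,x}$ lies in $\sigma_*\cO_Y(-jF)$ precisely when $\ord_F(\sigma^*f) \geq j$, which is the defining condition of $\fa_j$.

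The main analytic input is the vanishing $R^1\sigma_*\cO_Y(-(j+1)F) = 0$ for every $j \geq 0$. Since $(Y, F)$ is plt, the $\bQ$-divisor $K_Y + F$ is $\bQ$-Cartier, and combined with the hypothesis that $F$ is $\bQ$-Cartier this makes $K_Y$ itself $\bQ$-Cartier. Writing
\[
K_Y + F = \sigma^*K_X + A_X(F)\cdot F,
\]
with $A_X(F)>0$ since $x\in X$ is klt, one sees that
\[
-(j+1)F - (K_Y+F) = -\sigma^*K_X - (j+1+A_X(F))F
\]
is $\sigma$-numerically equivalent to the $\sigma$-ample $\bQ$-divisor $-(j+1+A_X(F))F$. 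The relative Kawamata--Viehweg vanishing applied to the plt pair $(Y,F)$ then gives the desired vanishing.

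Next, I would identify the cokernel $Q_j$ on $F$ with $\cO_F(\lfloor -jF|_F \rfloor)$. For $j$ divisible by the Cartier index of $F$, $\cO_Y(-jF)$ is a line bundle whose restriction to $F$ is $\cO_F(-jF|_F)$, consistent with the definition of $F|_F$ in \cite[Definition A.4]{HLS}; for general $j$, the identification of $Q_j$ with the reflexive sheaf $\cO_F(\lfloor -jF|_F\rfloor)$ is established by taking a cyclic cover trivializing $F$ and performing the local computation behind that definition.

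Combining these two steps, pushing forward the exact sequence yields
\[
0 \to \fa_{j+1} \to \fa_j \to H^0(F, \cO_F(\lfloor -jF|_F\rfloor)) \to 0
\]
for every $j \geq 0$, giving the claimed vector-space isomorphism of graded pieces. Compatibility with the ring structure is automatic: multiplication on both sides is inherited from $\cO_Y$-multiplication, and the inclusion $\lfloor -iF|_F\rfloor + \lfloor -jF|_F\rfloor \le \lfloor -(i+j)F|_F\rfloor$ ensures the product of sections lands in the correct graded piece. The main technical obstacle is the careful handling of the non-Cartier nature of $F$ when identifying $Q_j$ with $\cO_F(\lfloor -jF|_F\rfloor)$; once the framework of \cite{HLS} is invoked this reduces to a routine local computation.
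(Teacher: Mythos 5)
Your overall strategy coincides with the paper's: push forward the sequence $0 \to \cO_Y(-(j+1)F)\to\cO_Y(-jF)\to Q_j\to 0$, kill $R^1\sigma_*$ by relative Kawamata--Viehweg vanishing, and identify $Q_j$ with $\cO_F(\lfloor -jF|_F\rfloor)$. But two points need attention. First, you drop the boundary $\Delta$: in the setting of the proposition only $K_X+\Delta$ (resp.\ $K_Y+F+\sigma_*^{-1}\Delta$) is known to be $\bQ$-Cartier, so your claim that $K_Y$, and hence $\sigma^*K_X$, is $\bQ$-Cartier is unjustified when $\Delta\neq 0$. The correct computation is $-(j+1)F-(K_Y+\sigma_*^{-1}\Delta)=-\sigma^*(K_X+\Delta)-(j+A_{(X,\Delta)}(\ord_F))F$, which is $\sigma$-ample for $j\ge 0$, and one applies vanishing to the klt pair $(Y,\sigma_*^{-1}\Delta)$; your use of the plt pair $(Y,F)$ would in any case require either the lc version of vanishing or a small perturbation of the coefficient of $F$. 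This is a fixable oversight, since the structure of the argument is unchanged.

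Second, and more seriously, the identification $Q_j\cong\cO_F(\lfloor -jF|_F\rfloor)$ is exactly the delicate point, and your proposal asserts it rather than proves it. The cyclic-cover computation underlying \cite[Definition A.4, Lemma A.3]{HLS} only identifies $Q_j|_{F^\circ}$ with $\cO_{F^\circ}(\lfloor -jF|_F\rfloor)$ on an open subset $F^\circ\subseteq F$ whose complement has codimension $\ge 2$ in $F$ (note the non-Cartier locus of $F$ on $Y$ can have codimension $1$ in $F$, which is what the fractional part of $F|_F$ records, and the lemma says nothing in codimension $\ge 2$). Since the proposition concerns global sections, you must upgrade this to an isomorphism of sheaves on all of $F$, and for that you need $Q_j$ to be $S_2$ over $F$: a priori it could have embedded or non-$S_2$ behaviour in codimension $\ge 2$, in which case $H^0$ could differ from $H^0(F,\cO_F(\lfloor -jF|_F\rfloor))$. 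This is precisely where the paper uses plt-ness of $(Y,F+\sigma_*^{-1}\Delta)$ together with \cite[Corollary 5.25]{km98} to conclude that $\cO_Y(-jF)$, hence the quotient $\cO_Y(-jF)/\cO_Y(-(j+1)F)$, is Cohen--Macaulay, so that the codimension-two extension of the canonical isomorphism is legitimate. Without some such depth argument (which a cyclic-cover route would also ultimately need, e.g.\ via CM-ness of the index-one cover), the ``routine local computation'' does not yield the stated isomorphism of graded rings.
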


\begin{proof}
Since $X$ is normal, we have $\fa_j=\sigma_*\cO_Y(-jF)$. Thus we have the following exact sequence
\[
 0\to\fa_{j+1}\to\fa_{j}\to \sigma_*(\cO_Y(-jF)/\cO_Y(-(j+1)F))\to R^1\sigma_*\cO_Y(-(j+1)F).
\]
Since  
\[
-(j+1)F-(K_Y+\sigma_*^{-1}\Delta)=-\sigma^*(K_X+\Delta)-(j+A_{(X,\Delta)}(\ord_F))F,
\]
the divisor $-(j+1)F-(K_Y+\sigma_*^{-1}\Delta)$ is $\sigma$-ample whenever $j\geq 0$.
Hence  $R^1\sigma_*\cO_Y(-(j+1)F)=0$ whenever $j\geq 0$. 
Thus for every $j\geq 0$ we obtain a canonical isomorphism
\[
\fa_j/\fa_{j+1}\cong\sigma_*(\cO_Y(-jF)/\cO_Y(-(j+1)F)).
\]
Since $(Y,F+\sigma_*^{-1}\Delta)$ is plt and $F$ is $\bQ$-Cartier on $Y$, the sheaf $\cO_Y(-jF)$ is Cohen-Macaulay by \cite[Corollary 5.25]{km98}. Hence
the sheaf $\cO_Y(-jF)/\cO_Y(-(j+1)F)$ is also Cohen-Macaulay
whose support is $F$. By \cite[Lemma A.3]{HLS}, we know that over an open set $F^\circ$ of $F$ with $\codim_{F}F\setminus F^\circ\geq 2$, there is a canonical isomorphism between $(\cO_Y(-jF)/\cO_Y(-(j+1)F))|_{F^\circ}$
and $\cO_{F^{\circ}}(\lfloor -jF|_F\rfloor)$. Thus this isomorphism extends to a canonical isomorphism
\[
\cO_Y(-jF)/\cO_Y(-(j+1)F)\cong\cO_F(\lfloor -jF|_F\rfloor).
\]
This implies $\fa_j/\fa_{j+1}\cong H^0(F,\cO_F(\lfloor -jF|_F\rfloor)$ for every $j\geq 0$. Since all the isomorphisms above are canonical, they are all compatible with the obvious product structure of the graded ring. Hence we finish the proof.
\end{proof}

%

The next result is a slight generalization of  \cite[Proposition 5.3]{LX16} (see \cite[Proposition 3.5]{LL19} for an analogous result in terms of conical K\"ahler-Einstein metrics).

\begin{prop}\label{prop:kpscone}
Let $(V,\Delta)$ be an $(n-1)$-dimensional log Fano pair where $\Delta$ has standard coefficients, i.e. its coefficients belong to $\{1-\frac{1}{m}\mid m\in \bZ_{>0}\}$. Let $M$ be an ample $\bQ$-Cartier $\bQ$-divisor on $V$ satisfying $M\sim_{\bQ}-r^{-1}(K_V+\Delta)$ for some $0<r\leq n$ and $\Delta_M=\Delta$.
Denote by $X:=C_p(V,M)$.
Then 
\begin{enumerate}
    \item the pair $(X,(1-\frac{r}{n})V_\infty)$ is  K-semistable if and only if $(V,\Delta)$ is K-semistable.
    \item the pair $(X,(1-\frac{r}{n})V_\infty)$ is  K-polystable if and only if $(V,\Delta)$ is K-polystable.
\end{enumerate}
\end{prop}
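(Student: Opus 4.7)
The plan is to establish a bijective correspondence between special test configurations of $(V,\Delta)$ and those of $(X,(1-\frac{r}{n})V_\infty)$ that preserves the product/trivial structure and under which the generalized Futaki invariants are proportional with a positive constant. By the Li--Xu theorem \cite{LX14}, it suffices to test K-(semi/poly)stability on special test configurations, so such a correspondence yields both statements at once.

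Starting from a special test configuration $(\cV,\Delta_{\tc};\cL_\cV)/\bA^1$ of $(V,\Delta)$ with $\cL_\cV\sim_\bQ-l(K_{\cV/\bA^1}+\Delta_{\tc})$, I would set $\cM:=\frac{1}{rl}\cL_\cV$ and form the relative projective orbifold cone $\cX:=C_p(\cV,\cM)/\bA^1$, equipped with the divisor $\cV_\infty$ at infinity. The $\bG_m$-action on $\cV$ and the $\bG_m$-action from the cone grading combine into a $\bG_m$-action on $\cX$, and the central fiber $\cX_0$ is the projective orbifold cone $C_p(\cV_0,\cM_0)$; by the preceding proposition, the pair $(\cX_0,(1-\frac{r}{n})\cV_\infty|_{\cX_0})$ is plt and hence the whole family is a special test configuration of $(X,(1-\frac{r}{n})V_\infty)$.

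Conversely, given a special test configuration $(\cX,(1-\frac{r}{n})\cV_\infty;\cL)/\bA^1$ of $(X,(1-\frac{r}{n})V_\infty)$, the cone structure of $X$ gives a $\bG_m$-action $T$ preserving $(1-\frac{r}{n})V_\infty$. By equivariant K-stability (cf.\ \cite{LX14,BX18}) one may upgrade the test configuration to be $T$-equivariant, so the central fiber $\cX_0$ acquires a commuting $T$-action; applying Proposition \ref{prop:kcdeg} at the vertex of $X$ (which specializes to a $T$-fixed point of $\cX_0$) identifies $\cX_0$ itself as a projective orbifold cone over $\cV_\infty\cap\cX_0$. Taking adjunction along $\cV_\infty$ then yields a special test configuration $(\cV,\Delta_{\tc};\cL|_{\cV_\infty})/\bA^1$ of $(V_\infty,\Diff_{V_\infty}(0))=(V,\Delta)$, which one checks is the inverse of the construction above.

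It remains to compare generalized Futaki invariants. Using $K_X+(1-\frac{r}{n})V_\infty\sim_\bQ-\frac{r(n+1)}{n}V_\infty$ from the preceding proposition and the standard cone-intersection identities relating powers of $\overline\cV_\infty$ on $\ocX$ to intersection numbers involving $\cM$ on $\overline\cV$, a direct computation of $(\ocL^{n+1})$ and $(\ocL^n\cdot(K_{\ocX/\bP^1}+(1-\frac{r}{n})\overline\cV_\infty))$ should yield $\Fut(\cX,(1-\frac{r}{n})\cV_\infty;\cL)=c\cdot\Fut(\cV,\Delta_{\tc};\cL_\cV)$ for an explicit positive constant $c=c(n,r)$. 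Together with the obvious compatibility $\Aut(V,\Delta)\hookrightarrow\Aut(X,(1-\frac{r}{n})V_\infty)$, which matches product (resp.\ trivial) test configurations on the two sides, this proves both (1) and (2). The main obstacle is this Futaki computation itself: the orbifold cone construction produces fractional parts in $\cM$ and the boundary $\cV_\infty$ contributes nontrivially to $K_{\ocX/\bP^1}$, so the intersection algebra must be carried out carefully, but conceptually the identity reduces to the fact that every relevant divisor on $\ocX$ decomposes as a pullback from $\overline\cV$ plus a multiple of $\overline\cV_\infty$.
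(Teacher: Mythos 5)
Your overall scheme---a bijective, Futaki-preserving correspondence between special test configurations of $(V,\Delta)$ and of $(X,(1-\frac{r}{n})V_\infty)$ that matches product with product and trivial with trivial---cannot exist, and this is a genuine gap in the ``if'' direction. The cone construction is injective but far from surjective on special test configurations of $X$: the product test configuration of $(X,(1-\frac{r}{n})V_\infty)$ induced by the canonical cone $\bG_m$-action is not the cone over any product test configuration of $(V,\Delta)$ (its restriction to $V_\infty$ is the \emph{trivial} configuration of $(V,\Delta)$), and more generally $\Aut(X,(1-\frac{r}{n})V_\infty)$ strictly contains the image of $\Aut(V,\Delta)$. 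If a correspondence with all the properties you claim existed, the same argument would show ``$(V,\Delta)$ K-stable $\Rightarrow$ $(X,(1-\frac{r}{n})V_\infty)$ K-stable'', which is false since the cone always carries a positive-dimensional automorphism group; this is a structural sign that the reduction cannot go through as stated. Your attempted repair via equivariance is also not available as cited: the reduction of K-polystability to $T$-equivariant special test configurations is not in \cite{LX14} or \cite{BX18}, and even granting it, the assertion that a $T$-equivariant special test configuration of a cone must itself be a cone over a test configuration of $V$ does not follow from Proposition \ref{prop:kcdeg} (which describes the associated graded ring of a Koll\'ar component, not equivariant degenerations of $X$); classifying such equivariant degenerations (which in general involve twists by the cone action) is precisely the difficulty the paper's proof is designed to avoid. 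The paper instead takes an \emph{arbitrary} special test configuration $(\cX,(1-\frac{r}{n})\cV;\cL)$ of the cone, restricts to the normalization $\cV^\nu$ of the closure of $V_\infty\times(\bA^1\setminus\{0\})$, and proves the \emph{inequality} $\Fut(\cV^\nu,\Delta_{\tc};\cL|_{\cV^\nu})\le\Fut(\cX,(1-\frac{r}{n})\cV;\cL)$, with equality exactly when the different $\Delta_{\cV^\nu}$ equals the closure $\Delta_{\tc}$ of $\Delta$; this one-sided comparison suffices for semistability, and for polystability it is combined with the K-polystable degeneration of \cite{LWX18} and Lemma \ref{lem:normalcone} (degeneration to the normal cone) to conclude that $X$ is isomorphic to its own polystable degeneration. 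Note in particular that for a general test configuration of $X$ the Futaki invariants are related by an inequality, not the proportionality $\Fut=c\cdot\Fut$ you posit.

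There is also a gap in the direction you do carry out (the cone over a special test configuration of $(V,\Delta)$, which is the paper's ``only if'' direction). You assert that the central fiber of the relative orbifold cone is $C_p(\cV_0,\cM_0)$, but this is a delicate point when $\cM$ is merely $\bQ$-Cartier: the sheaves $\cO_{\cV}(\lfloor m\cM\rfloor)\otimes\cO_{\cV_0}$ and $\cO_{\cV_0}(\lfloor m\cM_0\rfloor)$ differ in codimension $\ge 2$ and the comparison maps on global sections are isomorphisms only for $m$ divisible by the Cartier index. The paper controls the kernels and cokernels (dimension $O(m^{n-3})$), produces a finite birational morphism $C_p(\cV_0,\cM_0)\to\cX_0$ that is an isomorphism in codimension one, and then uses inversion of adjunction to show $(\cX,\cX_0)$ is plt, hence $\cX_0$ is normal and the morphism is an isomorphism; only then does the Futaki comparison (with constant $1$, after checking $(K_{\cX/\bA^1}+\cV_\infty)|_{\cV_\infty}=K_{\cV_\infty/\bA^1}+\Delta_{\tc}$, i.e.\ no unexpected contribution to the different along the central fiber) go through. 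So even the half of your correspondence that is correct in spirit needs these normality and adjunction arguments, which your proposal treats as automatic.
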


\begin{proof}
For the ``if'' part of (1), we follow the proof of \cite[Proposition 5.3]{LX16}. Let $\pi:(\cX,(1-\frac{r}{n})\cV;\cL)\to \bA^1$ be a special test configuration of $(X,(1-\frac{r}{n})V_\infty;L)$ where $L\sim_{\bQ}-l(K_X+(1-\frac{r}{n})V_\infty)$ is an ample line bundle on $X$. By definition, we have 
\[
\Fut(\cX,(1-\tfrac{r}{n})\cV;\cL)=-\frac{(-K_{\ocX/\bP^1}-(1-\tfrac{r}{n})\ocV)^{n+1}}{(n+1)(-K_X-(1-\tfrac{r}{n})V_\infty)^n}.
\]
Since $(1+r)V_\infty\sim_{\bQ}-K_X$, we know that there exists $k\in\bQ$ such that
\[
(1+r)\ocV\sim_{\bQ}-K_{\ocX/\bP^1}+\pi^*\cO_{\bP^1}(k).
\]
Therefore, 
\[
\Fut(\cX,(1-\tfrac{r}{n})\cV;\cL)=-\frac{(\frac{n+1}{n}r\ocV-\pi^*\cO_{\bP^1}(k))^{n+1}}{(n+1)(\frac{n+1}{n}rV_\infty)^n}=k-\frac{r(\ocV^{n+1})}{n(V_\infty^n)}.
\]
Denote by $\cV^\nu$ and $\ocV^\nu$ be the normalization
of $\cV$ and $\ocV$, respectively. Then by adjunction, we have $(K_{\ocX/\bP^1}+\ocV)|_{\ocV^\nu}=K_{\ocV^\nu/\bP^1}+\Delta_{\ocV^\nu}$ where $\Delta_{\ocV^\nu}\geq 0$ is the different. Denote by $\Delta_{\cV^\nu}:=\Delta_{\ocV^\nu}|_{\cV^\nu}$.
Since $(\ocX,\ocV)\times_{\bP^1}(\bP^1\setminus\{0\})\cong (X,V)\times(\bP^1\setminus\{0\})$, we know that 
$\Delta_{\ocV^\nu}|_{\ocV^\nu\setminus\ocV_0^\nu}$ corresponds to $\Delta\times(\bP^1\setminus\{0\})$. Denote by $\Delta_{\tc}$ the Zariski closure of $\Delta\times(\bA^1\setminus\{0\})$ in $\cV^\nu$. Then clearly
$\Delta_{\cV^\nu}\geq \Delta_{\tc}$ and $(\cV^\nu,\Delta_{\tc})/\bA^1$ is a normal test configuration of $(V,\Delta)$. 
It is clear that $L|_{V_\infty}\sim_{\bQ}\frac{(n+1)l}{n} (-K_V-\Delta)$. Denote by $l':=\frac{(n+1)l}{n}$.
Then
\begin{align*}
\Fut(\cV^\nu,\Delta_{\tc};\cL|_{\cV^\nu})&=\frac{1}{(-K_V-\Delta)^{n-1}}\left(\frac{n-1}{n}\cdot\frac{(\ocL|_{\ocV^\nu}^{n})}{l'^{n}}+\frac{(\ocL|_{\ocV^\nu}^{n-1}\cdot (K_{\ocV^\nu/\bP^1}+\overline{\Delta}_{\tc}))}{l'^{n-1}}\right)\\
&\leq\frac{1}{(-K_V-\Delta)^{n-1}}\left(\frac{n-1}{n}\cdot\frac{(\ocL|_{\ocV^\nu}^{n})}{l'^{n}}+\frac{(\ocL|_{\ocV^\nu}^{n-1}\cdot (K_{\ocV^\nu/\bP^1}+\Delta_{\ocV^\nu}))}{l'^{n-1}}\right)\\
& =\frac{1}{(-K_V-\Delta)^{n-1}}\left(\frac{n-1}{n}\cdot\frac{(\ocL^{n}\cdot\ocV)}{l'^{n}}+\frac{(\ocL^{n-1}\cdot\ocV\cdot (K_{\ocX/\bP^1}+\ocV))}{l'^{n-1}}\right)
\end{align*}
Since $-K_{\ocX/\bP^1}\sim_{\bQ}(1+r)\ocV-\pi^*\cO_{\bP^1}(k)$ and $\ocL\sim_{\bQ}l'r\ocV-\pi^*\cO_{\bP^1}(lk)$,
we have
\[
\frac{(\ocL^{n}\cdot\ocV)}{l'^{n}}=r^n(\ocV^{n+1})-\frac{n^2}{n+1}r^{n-1}(\ocV^n\cdot\pi^*\cO_{\bP^1}(k)),
\]
and 
\[
\frac{(\ocL^{n-1}\cdot\ocV\cdot (K_{\ocX/\bP^1}+\ocV))}{l'^{n-1}}
=-r^n(\ocV^{n+1})+\frac{n^2+1}{n+1}r^{n-1}(\ocV^n\cdot\pi^*\cO_{\bP^1}(k)).
\]
Since $(V_\infty^n)=(M^{n-1})=(-K_V-\Delta)^{n-1}/r^{n-1}$
and $(\ocV^n\cdot\cO_{\bP^1}(k))=k(V_\infty^n)$,
we have
\[
\Fut(\cV^\nu,\Delta_{\tc};\cL|_{\cV^\nu})\leq \frac{1}{r^{n-1}(V_\infty^{n})}\left(-\frac{r^n}{n}(\ocV^{n+1})+r^{n-1}k(V_\infty^n)\right)=k-\frac{r(\ocV^{n+1})}{n(V_\infty^n)}.
\]
In particular,  $\Fut(\cV^\nu,\Delta_{\tc};\cL|_{\cV^\nu})\leq \Fut(\cX,(1-\tfrac{r}{n})\cV;\cL)$ with equality if and only if $\Delta_{\cV^\nu}=\Delta_{\tc}$. Thus we finish proving the ``if'' part of (1).
\smallskip

For the ``if'' part of (2), let us take the special test configuration $(\cX,(1-\frac{r}{n})\cV);\cL)/\bA^1$ such that $(\cX_0,(1-\frac{r}{n})\cV_0)$ is K-polystable and $\Fut(\cX,(1-\tfrac{r}{n})\cV;\cL)=0$. Such a special test configuration always exists due to \cite[Theorem 1.3]{LWX18}.
Since $(V,\Delta)$ is K-polystable and $\Fut(\cV^\nu,\Delta_{\tc};\cL|_{\cV^\nu})\leq \Fut(\cX,(1-\tfrac{r}{n})\cV;\cL)$, we know that $\Fut(\cV^\nu,\Delta_{\tc};\cL|_{\cV^\nu})=0$. This implies that $(\cV^\nu,\Delta_{\tc};\cL|_{\cV^\nu})$ is a product test configuration of $(V,\Delta)$ and $\Delta_{\cV^\nu}=\Delta_{\tc}$. By inversion of adjunction, the pair $(\cX,\cV)$ is plt and $\cV^\nu=\cV$. In particular, $(\cX_0,\cV_0)$ is plt with different divisor $\Delta_{\tc,0}$ and   $(\cV_0,\Delta_{\tc,0})\cong (V,\Delta)$. Let us take a $\bQ$-Cartier $\bQ$-divisor $\cM\sim\cV|_{\cV}$ on $\cV$ whose support does not contain $\cV_0$ such that $\cM|_{\cV\setminus\cV_0}\cong M\times(\bA^1\setminus\{0\})$. Thus $\cM\cong M\times\bA^1$ which implies $(\cV_0,\cM_0)\cong (V,M)$ where $\cM_0\sim \cV_0|_{\cV_0}$. Then by Lemma \ref{lem:normalcone} we know that there exists a special test configuration of $(\cX_0,(1-\frac{r}{n})\cV_0)$ with central fiber isomorphic to $(X,(1-\frac{r}{n})V_\infty)$ and vanishing generalized Futaki invariant. Since $(\cX_0,(1-\frac{r}{n})\cV_0)$ is K-polystable, by definition we have $(\cX_0,(1-\frac{r}{n})\cV_0)\cong (X,(1-\frac{r}{n})V_\infty)$ which finishes proving the ``if'' part of (2).

\smallskip

For the ``only if'' parts of (1) and (2), suppose $(\cV,\Delta_{\tc};\cL_{\cV})$ is a special test configuration of $(V,\Delta)$. We will construct a special test configuration $(\cX,(1-\frac{r}{n})\cV;\cL)/\bA^1$ with the same generalized Futaki invariant. The construction goes as follows. Firstly, let us take $\cM$ to be the closure of $M\times(\bA^1\setminus\{0\})$ in $\cV$. Since $\cV_0$ is irreducible and Cartier on $\cV$, we know that  $\cM$ is $\bQ$-Cartier and $\bQ$-linearly equivalent to $-r^{-1}(K_{\cV/\bA^1}+\Delta_{\tc})$. Denote by $\pi:\cV\to \bA^1$. Then let us consider the following variety 
\[
\cX:=\Proj_{\bA^1}\bigoplus_{m=0}^{\infty}\bigoplus_{i=0}^{\infty}\pi_*\cO_{\cV}(\lfloor m\cM\rfloor)\cdot s^i,
\]
where the grading of $\pi_*\cO_{\cV}(\lfloor m\cM\rfloor)$ and $s$ are $m$ and $1$, respectively. We denote the divisor on $\cX$ corresponding to $(s=0)$ by $\cV_\infty$. It is clear that $\cX$ is normal. We will show that $\cV_\infty\cong \cV$ and $\cX_0\cong C_p(\cV_0,\cM_0)$. 

Since $\cV_\infty$ is defined by $(s=0)$, we have
$\cV_\infty=\Proj_{\bA^1}\oplus_{i=0}^{\infty}\pi_*\cO_{\cV}(\lfloor m\cM\rfloor)\cdot s^0
\cong \cV$.
Consider $\{M\}=\sum_{j=1}^k\frac{a_j}{b_j}M_j$ where $M_j$ are distinct prime divisors on $V$ and $0<a_j<b_j$ are coprime integers. Then $\Delta=\Delta_M=\sum_{j=1}^k\frac{b_j-1}{b_j}M_j$. In particular, all non-zero coefficients of $\Delta$ are at least $\frac{1}{2}$. Let $\cM_j$ be the Zariski closure of $M_j$ in $\cV$. Since $(\cV_0,\Delta_{\tc,0})$ is klt, we know that $\cM_{j,0}$ and $\cM_{j',0}$ do not any have component in common for $j\neq j'$. As a result, we have $\lfloor m\cM \rfloor|_{\cV_0}=\lfloor m\cM_0\rfloor$ as $\bZ$-divisors on $\cV_0$. Let us consider the homomorphisms
\[
\pi_*\cO_{\cV}(\lfloor m\cM\rfloor)\otimes\kappa(0)\xrightarrow{g_m} H^0(\cV_0,\cO_{\cV}(\lfloor m\cM\rfloor)\otimes\cO_{\cV_0})\xrightarrow{h_m} H^0(\cV_0,\cO_{\cV_0}(\lfloor m\cM_0\rfloor)).
\]
Denote the composition by $f_m:= h_m\circ g_m$.
Since $\cM$ is $\pi$-ample, by Serre vanishing there exists $m_0\in\bZ_{>0}$ such that $g_m$ is an isomorphism for any $m\geq m_0$. We also know that $\cO_{\cV}(\lfloor m\cM\rfloor)\otimes\cO_{\cV_0}$ and $\cO_{\cV_0}(\lfloor m\cM_0\rfloor)$ agree over the smooth locus of $\cV_0$. Let coherent sheaves $\cG_m$ and $\cG_m'$ on $\cV_0$ be the kernel and cokernel of the morphism $\cO_{\cV}(\lfloor m\cM\rfloor)\otimes\cO_{\cV_0}\to \cO_{\cV_0}(\lfloor m\cM_0\rfloor)$.
Since $\cV_0$ is normal, we know that both supports of $\cG_m$ and $\cG_m'$ have dimension at most $n-3$. Meanwhile, if $m_1\cM$ is Cartier then for any $k\in\bZ_{>0}$ we have
\[
\cG_{m+k m_1}=\cG_m\otimes \cO_{\cV_0}(km_1\cM_0),\quad \cG_{m+k m_1}'=\cG_m'\otimes \cO_{\cV_0}(km_1\cM_0).
\]
Thus we have $\dim\ker(f_m)=O(m^{n-3})$ and $\dim\coker(f_m)=O(m^{n-3})$. It is also clear that $f_m$ is an isomorphism whenver $m\geq m_0$ and $m_1\mid m$. For simplicity we may assume $m_1\geq m_0$. 

Next, we show that $(f_m)$ induces a morphism $\phi:C_p(\cV_0,\cM_0)\to \cX_0$ that is finite, birational, and isomorphic outside a codimension $2$ subset. For simplicity, denote by $P_m:=\pi_*\cO_{\cV}(\lfloor m\cM\rfloor)\otimes\kappa(0)$ and $P_m':=H^0(\cV_0,\cO_{\cV_0}(\lfloor m\cM_0\rfloor))$. Then we know that 
\[
\cX_0=\Proj \bigoplus_{m=0}^{\infty}\bigoplus_{i=0}^{\infty}P_m\cdot s^i
,\qquad C_p(\cV_0,\cM_0) = \Proj \bigoplus_{m=0}^{\infty}\bigoplus_{i=0}^{\infty}P_m'\cdot s^i.
\]
Here the gradings of $P_m$, $P_m'$, and $s$ are $m$, $m$, and $1$, respectively.
From the definition of $f_m:P_m\to P_m'$ we see that it induces ring homomorphism  $\psi: \oplus_{(m,i)
\in \bZ_{\geq 0}^2} P_m\cdot s^i\to \oplus_{(m,i)
\in \bZ_{\geq 0}^2} P_m'\cdot s^i$ which preserves the $\bZ_{\geq 0}^2$-grading. Since $f_m$ is an isomorphism for $m_1\mid m$, we see that $\psi$ induces an isomorphism for the subrings graded by $\{(m,i)\in\bZ_{\geq 0}^2\colon m_1\mid m\}$. This implies that $\psi$ induces a finite morphism $\phi: C_p(\cV_0,\cM_0)\to \cX_0$. Moreover, since both $\ker(f_m)$ and $\coker(f_m)$ have dimension $O(m^{n-3})$, both kernel and cokernel of $\psi_{l}=\oplus_{m=0}^l f_m\cdot s^{l-m}$ have dimension $O(l^{n-2})$. Hence the Hilbert polynomials of both kernel and cokernel of the structure morphism $\phi^{\#}:\cO_{\cX_0}\to \phi_*  \cO_{C_p(\cV_0,\cM_0)}$ have degree at most $n-2$. This implies that $\phi$ is an isomorphism outside a codimension $2$ subset. In particular, $\phi$ is birational. 

From the above properties of $\phi$, we have that $C_p(\cV_0,\cM_0)$ is the normalization of $\cX_0$. Since $\cX_0$ is Cartier in $\cX$ and $C_p(\cV_0,\cM_0)$ is klt, by inversion of adjunction we know that $(\cX,\cX_0)$ is plt. But then $\cX_0$ itself is normal (see e.g. \cite[Proposition 5.51]{km98}) and hence $\phi$ is an isomorphism.
Since $\cV_{\infty,0}$ is the infinity section of $\cX_0$, we know that $\cX_0$ (and hence $\cX$) is smooth at the generic point of $\cV_{\infty,0}$. Therefore the different divisor of $\cV_\infty$ in $\cX$ has no component of $\cV_{\infty,0}$. In particular,  $(K_{\cX/\bA^1}+\cV_\infty)|_{\cV_\infty}=K_{\cV_\infty/\bA^1}+\Delta_{\tc}$. Hence from the discussion of the ``if'' parts, we know $\Fut(\cX,(1-\frac{r}{n})\cV_\infty;\cL)=\Fut(\cV,\Delta_{\tc};\cL_{\cV})$. Hence K-semistability (resp. K-polystability) of $(X,(1-\frac{r}{n})V_\infty)$ implies K-semistability (resp. K-polystability) of $(V,\Delta)$. We finish the proof.
\end{proof}

\begin{lem}\label{lem:normalcone}
Let $X$ be an $n$-dimensional normal projective variety. Let $V$ be a prime divisor on $X$. Assume that  $(X,V)$ is plt and 
 $-K_X\sim_{\bQ}(1+r)V$ is ample for some $0<r\leq n$. 
Let $M\sim V|_V$ be a $\bQ$-Cartier $\bQ$-divisor on $V$.
Then there exists a special test configuration $(\cX,(1-\frac{r}{n})\cV;\cL)$ of $(X,(1-\frac{r}{n})V)$ such that the central fiber $(\cX_0,\cV_0)$ is isomorphic to $(C_p(V,M),V_{\infty})$ and $\Fut(\cX,(1-\frac{r}{n})\cV;\cL)=0$.
\end{lem}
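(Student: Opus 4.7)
The plan is to construct $(\cX, \cV; \cL)$ as a Rees-algebra degeneration of $X$ along the divisorial valuation $\ord_V$, and to derive $\Fut = 0$ by restricting to $\cV$.

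I would choose a positive integer $l$ sufficiently divisible so that both $L := -l(K_X + (1-\tfrac{r}{n})V)$ and $aV$ are Cartier with $L \sim aV$, where $a := \tfrac{(n+1)rl}{n} \in \bZ_{>0}$; note that $V$ is ample since $-K_X \sim_\bQ (1+r)V$ is ample. For $m \ge 0$ and $j \in \bZ$, set $A_m^j := \{s \in H^0(X, mL) : \ord_V(s) \ge j\}$ (with $A_m^j := H^0(X, mL)$ for $j \le 0$) and form the Rees algebra
\[\cR := \bigoplus_{m \ge 0}\bigoplus_{j \in \bZ} A_m^j \cdot t^{-j} \subset \Big(\bigoplus_m H^0(X, mL)\Big) \otimes_{\bC} \bC[t, t^{-1}],\]
which is finitely generated over $\bC[t]$ (by ampleness of $V$). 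Set $\cX := \Proj_{\bA^1} \cR$ and $\cL := \cO_\cX(1)$, with the $\bG_m$-action that scales $t$ with weight one, and let $\cV \subset \cX$ be the Zariski closure of $V \times (\bA^1 \setminus \{0\})$.

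To identify the central fiber, I would apply Kawamata-Viehweg vanishing to the klt pair $(X, \epsilon V)$ for a sufficiently small $0 < \epsilon < r$, obtaining $H^1(X, \cO_X(mL - (j+1)V)) = 0$ for all $0 \le j \le ma$ and $m \gg 0$. This yields $A_m^j / A_m^{j+1} \cong H^0(V, \cO_V((ma-j)M))$ with $M := V|_V$, and hence
\[\cX_0 \cong \Proj \bigoplus_{m \ge 0}\bigoplus_{j=0}^{ma} H^0(V, \cO_V((ma-j)M)) \cong C_p(V, M)\]
with $\cL|_{\cX_0} \cong aH$; tracing $\sigma_V^a t^{-a} \mod t$ to $s^a$ then shows $\cV_0 = V_\infty$ as $\bQ$-Cartier divisors. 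After twisting $\cL$ by a pullback from $\bA^1$ if necessary, the relation $\cL \sim_\bQ -l(K_{\cX/\bA^1} + (1-\tfrac{r}{n})\cV)$ holds (the two sides agree on the generic fiber and differ by a multiple of $\cX_0$). Pltness of $(\cX, \cX_0 + (1-\tfrac{r}{n})\cV)$ follows via inversion of adjunction from klt-ness of $(C_p(V, M), (1-\tfrac{r}{n})V_\infty)$, which is a consequence of the orbifold cone proposition applied to the log Fano pair $(V, \Delta_V) = (V, \Delta_M)$ obtained by adjunction from $(X, V)$ being plt (so $M \sim_\bQ -r^{-1}(K_V + \Delta_V)$).

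For the vanishing of the Futaki invariant, I would observe that $\cV \cong V \times \bA^1$ as a scheme with the induced $\bG_m$-action trivial on the $V$-factor, since $\cV_0 = V_\infty$ is pointwise $\bG_m$-fixed in $C_p(V, M)$. Hence the restricted test configuration is the trivial test configuration of $(V, \Delta_V)$, which has vanishing Futaki invariant. Since $\cV$ is normal and the different $\Delta_{\cV^\nu}$ coincides with the closure $\Delta_{\tc}$, the equality case of the key inequality derived in the proof of Proposition \ref{prop:kpscone}.(1) applies, giving
\[\Fut(\cX, (1-\tfrac{r}{n})\cV; \cL) = \Fut(\cV, \Delta_{\tc}; \cL|_\cV) = 0.\]
The principal technical obstacle is the central fiber analysis: one must establish cohomological vanishing throughout the full range $0 \le j \le ma$ and coherently match the various gradings and $\bQ$-Cartier normalizations so that $\cX_0 \cong C_p(V, M)$ with $\cV_0 = V_\infty$ and with the correct polarization $aH$.
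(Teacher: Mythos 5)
Your proposal is correct and follows essentially the same route as the paper's proof: a degeneration to the normal cone realized as $\Proj_{\bA^1}$ of the Rees algebra of the $\ord_V$-filtration, Kawamata--Viehweg vanishing to identify the associated graded ring with the orbifold cone ring of $(V,M)$ (so $\cX_0\cong C_p(V,M)$ and $\cV_0\cong V_\infty$), and $\Fut=0$ because the restricted test configuration on $\cV$ is trivial, combined with the computation from Proposition \ref{prop:kpscone}. The differences are cosmetic (you polarize by $-l(K_X+(1-\tfrac{r}{n})V)\sim aV$ instead of by $V$, and deduce triviality from pointwise $\bG_m$-fixedness of $V_\infty$ rather than from the vanishing of the $\lambda$-grading on $\gr_{\cF}R/\initl(I_V)$); just note that the graded-piece identification should read $H^0(V,\cO_V(\lfloor(ma-j)M\rfloor))$ with floors and, since $M$ is only a $\bQ$-divisor, requires the divisorial-sheaf argument of Proposition \ref{prop:kcdeg} in addition to the vanishing.
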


\begin{proof}
The construction is obtain by degeneration to normal cone. We will use the filtration language \cite[Section 2.3]{BX18} to construct $\cX$. Let $R:=\oplus_{m=0}^\infty H^0(X,\cO_X(mV))$ be the graded ring of $X$. Let $R_m$ be the $m$-th graded piece of $R$. Consider the $\bN$-filtration $\cF$ of $R$ defined as
\[
\cF^\lambda R_m=H^0(X,\cO_X((m-\lambda )V))\subset H^0(X,\cO_X(mV))=R_m\quad\textrm{ for any }\lambda\in\bZ_{\geq 0}.
\]
Note that $\cF^\lambda R_m=R_m$ for any non-positive integer $\lambda$.
The Rees algebra of $\cF$ is defined as
\[
\Rees(\cF):=\bigoplus_{m=0}^\infty\bigoplus_{\lambda=-\infty}^\infty t^{-\lambda}\cF^\lambda R_m\subset R[t,t^{-1}].
\]
The associated graded ring of $\cF$ is defined as
\[
\gr_{\cF} R:=\bigoplus_{m=0}^\infty\bigoplus_{\lambda=-\infty}^\infty \gr_{\cF}^\lambda R_m,
\quad \textrm{ where }\gr_{\cF}^\lambda R_m:=\frac{\cF^\lambda R_m}{\cF^{\lambda+1}R_m}.
\]
Note that the gradings of $\Rees(\cF)$ and $\gr_{\cF} R$ are both given by $m$. In our set-up it is easy to see that both $\Rees(\cF)$ and $\gr_{\cF}R$ are finitely generated.
Let $\cX:=\Proj_{\bA^1}\Rees(\cF)$. Then from \cite[Section 2.3.1]{BX18} we know that 
$\cX_0\cong \Proj~\gr_{\cF} R$. We will show that $\cX_0\cong C_p(V,M)$.

When $\lambda\leq -1$, we know $\cF^\lambda R_m=\cF^{\lambda+1} R_m= R_m$ so $\gr_{\cF}^\lambda R_m=0$. 
When $\lambda\geq m+1$, we know $\cF^{\lambda}R_m= \cF^{\lambda+1} R_m=0$ so $\gr_{\cF}^\lambda R_m=0$. Let us assume $\lambda\in [0,m]\cap\bZ$.
By Kawamata-Viehweg vanishing, we know $H^1(X,\cO_{X}((m-\lambda-1)V))=0$  since $(m-\lambda-1)V-K_X=(m-\lambda+r)V$ is ample. Hence from the proof of Proposition \ref{prop:kcdeg} we have a canonical isomorphism
\[
\gr_{\cF}^\lambda R_m=H^0(X,\cO_{X}((m-\lambda)V))/H^0(X,\cO_{X}((m-\lambda-1)V))\cong H^0(V,\cO_V(\lfloor(m-\lambda)M\rfloor)).
\]
Thus $\gr_{\cF} R\cong \oplus_{m=0}^\infty\oplus_{i=0}^\infty H^0(V,\cO_V(\lfloor mM\rfloor))\cdot s^i$ as graded rings, so $\cX_0\cong C_p(V,M)$.

So far we have constructed a variety $\cX$ which provides a special degeneration from $X$ to $\cX_0\cong C_p(V,M)$. Let $\cV$ be the Zariski closure of $V\times(\bA^1\setminus\{0\})$ in $\cX$. Denote the graded ideal of $V$ in $X$ by $I_V\subset R$. Then it is clear that $I_{V,m}=H^0(X,\cO_X((m-1)V))$. By \cite[Section 2.3.1]{BX18}, the graded ideal of the scheme-theoretic central fiber $\cV_0$ is given by the initial ideal $\initl(I_V)$ where
\[
\initl(I_V):=\bigoplus_{m=0}^\infty\bigoplus_{\lambda=-\infty}^\infty \mathrm{im}(\cF^\lambda R_m\cap I_V\to \gr_{\cF}^\lambda R_m) \subset \gr_{\cF} R.
\]
We may restrict to $\lambda\in [0,m]\cap\bZ$. Then it is easy to see that 
\[
\mathrm{im}(\cF^\lambda R_m\cap I_V\to \gr_{\cF}^\lambda R_m)=\begin{cases}
\gr_{\cF}^\lambda R_m & \textrm{ if }\lambda\geq 1,\\
0 & \textrm{ if }\lambda\leq 0.
\end{cases}
\]
Thus $\initl(I_V)=\oplus_{m=0}^\infty\oplus_{\lambda=1}^m \gr_{\cF}^\lambda R_m$. This implies that $\cV_0\cong V_\infty$ under the isomorphism $\cX_0\cong C_p(V,M)$. 
Therefore, $(\cX,(1-\frac{r}{n})\cV;\cL)$ is a special test configuration of $(X,V;L)$ where $\cL:=-l(K_{\cX/\bA^1}+(1-\frac{r}{n})\cV)$ is Cartier for some $l\in\bZ_{>0}$. Since the $\lambda$-grading on $\gr_{\cF} R/\initl(I_V)$ is trivial, we know that $(\cV,\Delta_{\tc};\cL|_{\cV})$ is a trivial test configuration of $(V,\Delta;L|_V)$ where $\Delta_{\tc}$ is the Zariski closure of $\Delta\times(\bA^1\setminus\{0\})$ in $\cV$.
Since $\cX_0$ is normal and Cartier in $\cX$, we know that $\cX$ is smooth at the generic point of $\cV_0$. Then by normality of $\cV$, we know $(K_{\cX/\bA^1}+\cV)|_{\cV}=K_{\cV/\bA^1}+\Delta_{\tc}$. Hence by exactly the same computation in the proof of ``if'' part of Proposition \ref{prop:kpscone}(1), we know
\[
\Fut(\cX,(1-\tfrac{r}{n})\cV;\cL)=\Fut(\cV,\Delta_{\tc};\cL|_{\cV})=0.
\]
The proof is finished.
\end{proof}

\section{Fujita's characterization}\label{sec:fuj}

In this section, we make a few observations on the restrictions on $\bQ$-Fano varieties $X$ with $\alpha(X)=\frac{\dim(X)}{\dim(X)+1}$ that are not K-stable. We first have the following characterization, essentially due to K. Fujita.

\begin{thm}[cf. {\cite{Fuj16}}] \label{thm:fujalpha}
Let $X$ be an $n$-dimensional $\bQ$-Fano variety.
Assume that $\alpha(X)= \frac{n}{n+1}$ and $X$ is not
K-stable. Then there exists a birational morphism $\sigma:(F\subset Y)\to (x\in X)$ extracting a Koll\'ar component $F$ over $x\in X$, such that the following properties hold:
\begin{enumerate}
    \item $A_X(F)=n$, $\tau(F)=\epsilon(F)=n+1$.
    \item The divisor $\sigma^*(-K_X)-(n+1)F$ is semiample
    but not big. The ample model $\pi: Y\to Z$ of $\sigma^*(-K_X)-(n+1)F$ satisfies that all fibers of $\pi$ are one-dimensional and a generic fiber is $\bP^1$. Moreover, $\pi|_{F}:F\to Z$ is an isomorphism.
    \item $\alpha(F,\Delta_F)\geq 1$ where $\Delta_F$ is the different. Moreover, $\ord_F$ is a minimizer of the normalized volume function $\hvol$ on $\Val_{X,x}$.
\end{enumerate}
In particular, $x\in X$ is a weakly exceptional singularity.
\end{thm}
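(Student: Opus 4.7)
The plan is to follow K. Fujita's sandwich argument in \cite{Fuj16} combined with the valuative criterion of K-stability.

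First, I would extract a suitable Koll\'ar component. Since $X$ is not K-stable, the valuative criterion (Theorem \ref{thm:beta-criterion}) produces a dreamy prime divisor over $X$ with $\beta\le 0$; the Blum-Xu reduction \cite{BX18} then allows me to take $F$ to be a Koll\'ar component over some closed point $x\in X$, so $A_X(F)\le S(F)$. The hypothesis $\alpha(X)=\tfrac{n}{n+1}$, via $\alpha(X)=\inf_E A_X(E)/\tau(E)$, gives the opposite-direction bound $A_X(F)\ge \tfrac{n}{n+1}\tau(F)$. On the other hand, Fujita's inequality $S(F)\le \tfrac{n}{n+1}\tau(F)$ (which comes from the convex-geometric fact that the first-coordinate centroid of any convex body is at most $\tfrac{n}{n+1}$ of its first-coordinate maximum, with equality only for cones, applied to the Newton-Okounkov body of $-K_X$ with respect to $\ord_F$) sandwiches all three quantities and forces
\[
A_X(F)\;=\;S(F)\;=\;\tfrac{n}{n+1}\tau(F),
\]
so in particular equality holds in Fujita's bound.

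Next I would extract the geometric consequences of this equality. The equality forces the Okounkov body to be a cone with apex on $\{x_1=0\}$, equivalently $\vol(\sigma^*(-K_X)-tF)=\vol(-K_X)\bigl(1-(t/\tau)^n\bigr)$ throughout $[0,\tau]$. Therefore $\sigma^*(-K_X)-\tau F$ is nef of Iitaka dimension $n-1$, not big; dreaminess of $F$ provides finite generation of the associated section ring, so this divisor is semiample with ample model $\pi\colon Y\to Z$ of relative dimension one, and $\epsilon(F)=\tau(F)$ since the nef and pseudo-effective thresholds coincide along this ray. On a general fiber $f$, the identities $(\sigma^*(-K_X)-\tau F)\cdot f=0$ and $-K_Y=\sigma^*(-K_X)-(A_X(F)-1)F$ give $\sigma^*(-K_X)\cdot f=\tau(F\cdot f)$ and $-K_Y\cdot f=(\tau-A_X(F)+1)(F\cdot f)$; since $F\cdot f$ is a positive integer, $-K_Y\cdot f\ge 2$, and $A_X(F)/\tau=n/(n+1)$, the unique solution is $F\cdot f=1$, $A_X(F)=n$, $\tau(F)=n+1$ and $f\cong\bP^1$. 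Then $F$ is a section of $\pi$, so $\pi|_F$ has degree one and is finite (no fiber of $\pi$ lies in $F$) and birational between normal varieties (using plt-ness of $(Y,F)$), hence an isomorphism.

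For the remaining claim that $\alpha(F,\Delta_F)\ge 1$ (so that $x\in X$ is weakly exceptional by Theorem \ref{thm:alpha and weakly exceptional}) and that $\ord_F$ minimizes $\hvol$, I would argue as follows. Given an effective $D_F\sim_\bQ -(K_F+\Delta_F)=-nF|_F$, transport $D_F$ via $\pi|_F$ to $D_Z$ on $Z$ and set $\tilde D:=\pi^*D_Z$ on $Y$ and $D:=\sigma_*\tilde D$ on $X$. A direct class computation using $\pi^*H=\sigma^*(-K_X)-(n+1)F$, where $H$ is the polarization on $Z$, shows $\tilde D|_F=D_F$, $\sigma^*D=\tilde D+nF$ and $D\sim_\bQ \tfrac{n}{n+1}(-K_X)$. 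The hypothesis $\alpha(X)=\tfrac{n}{n+1}$ makes $(X,D)$ log canonical; the crepant identity $K_Y+F+\tilde D=\sigma^*(K_X+D)$ propagates log canonicity to $(Y,F+\tilde D)$, and inversion of adjunction along $F$ yields $(F,\Delta_F+D_F)$ log canonical. Since $D_F$ was arbitrary, $\alpha(F,\Delta_F)\ge 1$. The minimization of $\hvol$ by $\ord_F$ on $\Val_{X,x}$ follows from the theory of \cite{Li-minimize, LX16}: by Proposition \ref{prop:kcdeg} and Lemma \ref{lem:normalcone} the graded degeneration of $(X,x)$ along $\ord_F$ is the orbifold cone $C_a(F,-F|_F)$, which is K-semistable because $(F,\Delta_F)$ is K-semistable via Tian's criterion applied to $\alpha(F,\Delta_F)\ge 1$.

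The step I expect to be most delicate is the equality case of Fujita's inequality: upgrading the purely convex-geometric conclusion that the Okounkov body is a cone to the birational-geometric statement that $\sigma^*(-K_X)-\tau F$ admits a semiample contraction whose generic fibers are $\bP^1$'s and that realizes $F$ as an isomorphic section will require careful use of the Zariski decomposition and the simultaneous intersection-number conditions on a general fiber.
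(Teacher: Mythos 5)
Your opening reduction contains the one genuine gap. Theorem \ref{thm:beta-criterion} only produces a \emph{dreamy} prime divisor $F$ with $\beta(F)\le 0$; there is no ``Blum--Xu reduction'' in \cite{BX18} (or elsewhere in the paper's toolkit) that lets you assume from the start that $F$ is a Koll\'ar component over a \emph{closed point} $x$. Both of these facts are conclusions of the theorem, not permissible hypotheses: in the paper, the existence of the extraction $\sigma:Y\to X$ with $F=\Exc(\sigma)$, $-F$ $\sigma$-ample, $\sigma(F)=x$ a closed point, together with parts (1) and (2), is taken from Fujita's equality analysis \cite[Lemma 3.1, Propositions 3.2 and 3.3]{Fuj16} (after the input $A_X(F)\ge\alpha(X)\tau(F)$ from \cite{FO16}), while the plt-ness of $(Y,F)$ -- i.e.\ that $F$ is genuinely a Koll\'ar component -- is deduced only afterwards, from the $\alpha(F,\Delta_F)\ge 1$ computation by choosing $B$ through a potential lc center of $(F,\Delta_F)$ to force $(F,\Delta_F)$ klt. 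Assuming the Koll\'ar-component structure and the point-center at the outset is therefore circular, and your own reconstruction of the equality case leaves exactly the points that would replace those citations unproved: nefness/semiampleness of $\sigma^*(-K_X)-\tau F$ at the threshold, one-dimensionality of \emph{all} fibers of $\pi$, and the localization of the center of $F$ to a closed point. Two smaller slips in the same vein: the inference ``$F\cdot f$ is a positive integer, hence $-K_Y\cdot f\ge 2$'' needs the prior observation that a generic fiber is a smooth curve, so that $-K_Y\cdot f=2-2g$ is even and $>1$; and in part (3) the step from $(Y,F+\pi^*B_Z)$ lc to $(F,\Delta_F+D_F)$ lc is plain adjunction, not inversion of adjunction. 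All of this is repairable by citing or reproducing \cite{Fuj16} as the paper does, but as written the proposal assumes part of the conclusion.

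Apart from that, your route coincides with the paper's: the sandwich $A_X(F)=S(F)=\tfrac{n}{n+1}\tau(F)$ is precisely the mechanism behind the cited results of \cite{Fuj16}, and your proof of $\alpha(F,\Delta_F)\ge 1$ (transport $D_F$ through $\pi|_F$, the crepant identity $\sigma^*(K_X+D)=K_Y+F+\pi^*D_Z$, then adjunction) is essentially identical to the paper's. The one genuinely different ingredient is the minimizer claim: you invoke the criterion of \cite{LX16} that a Koll\'ar component minimizes $\hvol$ if and only if $(F,\Delta_F)$ is K-semistable, the latter following from $\alpha(F,\Delta_F)\ge1$ and Tian's criterion; the paper instead uses $(\sigma^*(-K_X)-(n+1)F)^n=0$ to get $(-K_X)^n=(1+\tfrac1n)^n\hvol_{X,x}(\ord_F)$ and concludes via K-semistability of $X$ and \cite[Theorem 21]{Liu18}. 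Both are valid; the paper's version needs no stability statement for $(F,\Delta_F)$ at that stage, while yours fits naturally with the cone picture developed in Proposition \ref{prop:kcdeg} and Lemma \ref{lem:normalcone}.
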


\begin{proof}
Since $X$ is not K-stable, by Theorem \ref{thm:beta-criterion} there exists a dreamy prime divisor $F$ over $X$ such that $\beta(F)\leq 0$. Meanwhile,
\cite[Lemma 3.3]{FO16} implies that $A_X(F)\geq \alpha(X)\tau(F)=\frac{n}{n+1}\tau(F)$. Then by \cite[Lemma 3.1, Proposition 3.2 and 3.3]{Fuj16} we know that there exists a proper birational morphism $\sigma:Y\to X$ with $Y$ normal such that $F=\Exc(\sigma)$ is a prime divisor on $Y$, $-F$ is $\sigma$-ample, $\sigma(F)$ is a closed point $x\in X$, and parts (1) and (2) hold. Moreover, part (3) on $\alpha(F,\Delta_F)\geq 1$ implies that $(F, \Delta_F)$ is klt, i.e. $F$ is a Koll\'ar component. Thus it suffices to show that part (3) holds.

Finally we prove part (3). From part (2) we know that $F\cong Z$ are both normal.  Denote by $\Delta_{F}$ the different divisor of $F$ in $Y$. Let $B\sim_{\bQ} -K_{F}-\Delta_{F}$ be any effective $\bQ$-divisor on $F$. We denote by $B_Z:=(\pi|_{F})_*B$. From parts (1) and (2) we know that  
\[
(-K_Y-2F)|_{F}=(\sigma^*(-K_X)-(n+1)F)|_{F}=-(n+1)F|_{F}.
\]
Hence $-K_{F}-\Delta_{F}=(-K_Y-F)|_{F}=-n F|_{F}$. Since $\pi|_F:F\to Z$ is an isomorphism, we know that 
\[
\sigma^*(-K_X)-(n+1)F\sim_{\bQ}\frac{n+1}{n}\pi^*(\pi|_{F})_*(-K_{F}-\Delta_{F})\sim_{\bQ}\frac{n+1}{n}\pi^*B_Z.
\]
It is clear that $\pi^*B_Z$ does not contain $F$ as a component. Thus $B_X:=\sigma_*\pi^*B_Z$ satisfies that 
$B_X\sim_{\bQ}-\frac{n}{n+1}K_X$ and $\ord_F(B_X)=n$.
Since $\alpha(X)=\frac{n}{n+1}$, the pair $(X,B_X)$ is log canonical. We know
\[
\sigma^*(K_X+B_X)=K_Y-(n-1)F+\pi^*B_Z+nF=K_Y+F+\pi^*B_Z.
\]
Hence $(Y,F+\pi^*B_Z)$ is log canonical. By adjunction, 
$(F,\Delta_{F}+B)$ is log canonical since $B=(\pi^*B_Z)|_{F}$. By choosing $B$ whose support contains a log canonical center of $(F, \Delta_{F})$, we see that $(F, \Delta_{F})$ is klt. Hence inversion of adjunction implies that $F$ is a Koll\'ar component over $x\in X$ and $\alpha(F,\Delta_F)\geq 1$. 

By part (2) we know that 
\[
(\sigma^*(-K_X)-(n+1)F)^n=(-K_X)^n-(n+1)^n (-1)^{n-1}(F^n)=0.
\]
It is clear that $\vol_{X,x}(\ord_F)=(-1)^{n-1}(F^n)$. From part (1) we know that $A_X(F)=n$. Thus we get $(-K_X)^n=(1+\frac{1}{n})^n\hvol_{X,x}(\ord_F)$. Since $\alpha(X)=\frac{n}{n+1}$, Theorem \ref{thm:tian} implies that $X$ is K-semistable. Hence \cite[Theorem 21]{Liu18} implies that $\ord_F$ is a divisorial minimizer of the normalized volume function $\hvol$ on $\Val_{X,x}$. Thus the proof of part (3) is finished.
\end{proof}

A quick consequence of the proof of Theorem \ref{thm:fujalpha} is that every $\bQ$-Fano variety $X$ satisfying $\alpha(X)=\frac{n}{n+1}$ that is not K-stable always specially degenerates to a K-polystable $\bQ$-Fano variety $\cX_0$ with $\alpha(\cX_0)=\frac{1}{n+1}$.

\begin{cor} \label{cor:polystable degeneration}
Let $X$ be an $n$-dimensional $\bQ$-Fano variety.
Assume that $\alpha(X)= \frac{n}{n+1}$ and $X$ is not
K-stable. Then there exists a special test configuration $(\cX,\cL)$ of $X$ such that $\mathrm{Fut}(\cX;\cL)=0$. Moreover, $\cX_0$ is K-polystable and $\alpha(\cX_0)=\frac{1}{n+1}$. In particular, if $n\geq 2$ then $X$ is not K-polystable.
\end{cor}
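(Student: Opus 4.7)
The plan is to use Theorem \ref{thm:fujalpha} to produce a Koll\'ar component $F$ over a point $x\in X$, then degenerate $X$ to the projective orbifold cone over $(F,\Delta_F)$ along $\ord_F$, and finally read off the K-polystability and the $\alpha$-invariant of this central fiber from Proposition \ref{prop:kpscone} and the fact that $(F,\Delta_F)$ satisfies $\alpha(F,\Delta_F)\ge 1$. First I would observe that $X$ is K-semistable (by Theorem \ref{thm:tian}(2)) but not K-stable, so by the valuative criterion (Theorem \ref{thm:beta-criterion}) there is a dreamy prime divisor with $\beta\le 0$; K-semistability then forces $\beta=0$, and Theorem \ref{thm:fujalpha} identifies this divisor as a Koll\'ar component $F$ with $A_X(F)=n$, $\tau(F)=\epsilon(F)=n+1$, and $\alpha(F,\Delta_F)\ge 1$.

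Next I would build the test configuration as a degeneration to the normal cone along $F$, mimicking the Rees-algebra construction of Lemma \ref{lem:normalcone}. Namely, fix $l\in\bZ_{>0}$ with $-lK_X$ Cartier, let $R=\bigoplus_m H^0(X,-mlK_X)$, and filter $R_m$ by $\cF^\lambda R_m=\{s\in R_m:\ord_F(s)\ge \lambda\}$. Dreaminess of $F$ makes $\Rees(\cF)$ finitely generated, so $\cX:=\Proj_{\bA^1}\Rees(\cF)$ is a normal test configuration of $X$. Using Kawamata-Viehweg vanishing on $Y$ for $-m\sigma^*K_X-(\lambda+1)F$ (as in the proof of Lemma \ref{lem:normalcone}, together with Proposition \ref{prop:kcdeg}), one identifies $\gr_\cF R$ with the bigraded section ring defining the projective orbifold cone $C_p(F,M)$, where $M:=-F|_F$. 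Because $A_X(F)=n$ and $\tau(F)=n+1$, the cone parameter in Proposition \ref{prop:kpscone} equals $r=n$, so $-K_{\cX_0}\sim_\bQ(n+1)V_\infty$ and the boundary $(1-\tfrac{r}{n})V_\infty$ is zero. Inversion of adjunction, applied to the klt pair $(F,\Delta_F)$, shows $(\cX,\cX_0)$ is plt, so the test configuration is special; and the filtration formula for the Futaki invariant gives $\Fut(\cX;\cL)=\beta(F)/(-K_X)^n$ up to a positive constant, hence $\Fut(\cX;\cL)=0$.

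To conclude K-polystability of $\cX_0$, I would apply Tian's criterion in the log setting to $(F,\Delta_F)$: from $\alpha(F,\Delta_F)\ge 1>\tfrac{n-1}{n}$ the $(n-1)$-dimensional log Fano pair is K-stable, hence K-polystable, and then Proposition \ref{prop:kpscone}(2) with $r=n$ yields that $\cX_0=C_p(F,M)$ (as a $\bQ$-Fano variety, with zero boundary) is K-polystable. For the $\alpha$-invariant, the lower bound $\alpha(\cX_0)\ge\tfrac{1}{n+1}$ is \cite[Theorem~3.5]{FO16} applied to the K-semistable $\cX_0$, while the upper bound is witnessed by the effective $\bQ$-divisor $(n+1)V_\infty\sim_\bQ -K_{\cX_0}$: since $(\cX_0,V_\infty)$ is plt but not lc for any coefficient $>1$, we get $\lct(\cX_0;(n+1)V_\infty)=\tfrac{1}{n+1}$. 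Finally, for $n\ge 2$ we have $\alpha(X)=\tfrac{n}{n+1}>\tfrac{1}{n+1}=\alpha(\cX_0)$, so $X\not\cong\cX_0$; thus $(\cX,\cL)$ is a non-product special test configuration with vanishing Futaki invariant, showing $X$ is not K-polystable.

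The main obstacle will be Step 2: verifying that the global Rees algebra construction produces precisely the projective orbifold cone $C_p(F,M)$ as central fiber. The local identification of the associated graded ring is given by Proposition \ref{prop:kcdeg}, but extending this to a global identification requires a careful argument with Kawamata-Viehweg vanishing on $Y$ for divisors of the form $-m\sigma^*K_X-jF$ in the range $0\le j\le m\tau(F)$, together with a check that the cone parameter $r$ extracted from the resulting graded ring indeed coincides with $n=A_X(F)$. Everything else is a fairly routine chase through the preliminaries.
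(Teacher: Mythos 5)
Your proposal is correct and follows essentially the same route as the paper: degenerate $X$ to the projective orbifold cone $C_p(F,-F|_F)$ over the Koll\'ar component from Theorem \ref{thm:fujalpha}, then get K-polystability of the central fiber from Tian's criterion for $(F,\Delta_F)$ plus Proposition \ref{prop:kpscone} with $r=n$, and pin down $\alpha(\cX_0)=\frac{1}{n+1}$ via \cite[Theorem 3.5]{FO16} and the divisor $(n+1)V_\infty\sim_\bQ -K_{\cX_0}$. The only cosmetic difference is that the paper produces the special test configuration with vanishing Futaki invariant by citing \cite[Lemma 33]{Liu18} (using that $\ord_F$ minimizes the normalized volume), whereas you re-derive that degeneration by hand from the Rees algebra of the $\ord_F$-filtration, dreaminess, and $\beta(F)=0$ --- which is essentially the content of the cited lemma.
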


\begin{proof}
Denote by $\fa_j:=\{f\in\cO_{X,x}\mid \ord_F(f)\geq j\}$ for $j\in\bZ$.
From the proof of \cite[Lemma 33]{Liu18}, we know that there exists a special test configuration $(\cX;\cL)$ such that $\Fut(\cX;\cL)=0$ and $\cX_0\cong\Proj (\oplus_{j=0}^\infty\fa_j/\fa_{j+1})[s]$. Since $F$ is a Koll\'ar component over $x\in X$, by adjunction of plt pairs we know that $F|_F$ is a $\bQ$-divisor on $F$ that is well-defined up to $\bZ$-linear equivalence. Moreover, if $\{F|_F\}=\sum_i\frac{a_i}{b_i}B_i$ for $a_i< b_i$ coprime positive integers and $B_i$ prime divisors on $F$, then (using the same notation as in Theorem \ref{thm:fujalpha}) $\Delta_F=\frac{b_i-1}{b_i}B_i$ (see e.g. \cite[Section 16]{FA92}). By \cite[Section 2.4]{LX16} or Proposition \ref{prop:kcdeg}, we know that
$\oplus_{j=0}^\infty\fa_j/\fa_{j+1}\cong\oplus_{m=0}^\infty H^0(F,\cO_F(\lfloor -mF|_F\rfloor))$ as graded rings which implies $\cX_0\cong C_p(F,(-F)|_F)$. Since $\alpha(F,\Delta_F)\geq 1$, Theorem \ref{thm:tian} implies that $(F,\Delta_F)$ is K-stable. Hence Proposition \ref{prop:kpscone} implies that $\cX_0$ is K-polystable since   $-K_F-\Delta_F\sim_{\bQ}-n F|_F$. To show $\alpha(\cX_0)=\frac{1}{n+1}$, we notice that $\alpha(\cX_0)\geq \frac{1}{n+1}$ by \cite[Theorem 3.5]{FO16}, and $\alpha(\cX_0)\leq\frac{1}{n+1}$ since $-K_{\cX_0}\sim_{\bQ} (n+1)F_{\infty}$. Hence the proof is finished.
\end{proof}

The equality $\alpha(X)+\alpha(\cX_0)=1$ here is not a coincidence, as can be seen by the following statement:

\begin{prop} \label{prop:alpha sum>1}
Let $f:X\rightarrow C$, $g:Y\rightarrow C$ be two $\bQ$-Gorenstein flat families of $\bQ$-Fano varieties $($i.e. all geometric fibers are integral, normal and $\bQ$-Fano, $K_{X/C}$ and $K_{Y/C}$ are both $\bQ$-Cartier$)$ over a smooth pointed curve $0\in C$. Let $X_0=f^{-1}(0)$ and $Y_0=g^{-1}(0)$. Assume that there exists an isomorphism $\rho:X\backslash X_0 \xrightarrow{\cong} Y\backslash Y_0$ over the punctured curve $C\backslash 0$ and $\alpha(X_0)+\alpha(Y_0)>1$. Then $\rho$ induces an isomorphism $X\cong Y$ over $C$.
\end{prop}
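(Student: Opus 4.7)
The plan is to form $W$, the normalization of the graph closure of $\rho$ inside the fibered product $X \times_C Y$, with its two projection maps $p \colon W \to X$ and $q \colon W \to Y$. Both are proper birational morphisms that are isomorphisms over $C \setminus \{0\}$. Since $X$ and $Y$ are normal and proper birational morphisms of normal schemes that contract no divisor are isomorphisms by Zariski's Main Theorem, it suffices to show that $p$ has no exceptional prime divisor: once this is known, $\rho = q \circ p^{-1}$ extends to a morphism $X \to Y$, and the same argument with the roles of $X$ and $Y$ swapped produces the inverse.

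I plan to suppose for contradiction that some prime divisor $E \subseteq W$ is $p$-exceptional; then $E \subseteq W_0$. Setting $m_E := \ord_E(W_0) = \ord_E(p^*X_0) = \ord_E(q^*Y_0)$, I note that since $X_0$ (resp.\ $Y_0$) is klt and Cartier on $X$ (resp.\ on $Y$), inversion of adjunction shows that both $(X,X_0)$ and $(Y,Y_0)$ are plt. Therefore $A_X(E) - m_E = A_{(X,X_0)}(E) > 0$, while $A_Y(E) - m_E \geq 0$ with equality exactly when $E$ is the strict transform of $Y_0$ via $q$. Since $W_0$ is reducible under the contradiction hypothesis, I can choose $E$ so that $A_Y(E) - m_E > 0$ as well, handling separately the degenerate configuration where the only $p$-exceptional component is the strict transform of $Y_0$ by running the symmetric argument with $X$ and $Y$ swapped.

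The key estimate I would establish uses the $\alpha$-invariants of the central fibers. For any effective $\bQ$-divisor $D$ on $X$ with $D \sim_{\bQ} -K_{X/C}$ and $X_0 \not\subseteq \Supp(D)$, the restriction $D|_{X_0} \sim_{\bQ} -K_{X_0}$ is effective, so $(X_0, \alpha(X_0) D|_{X_0})$ is log canonical; by inversion of adjunction applied to the ambient pair, $(X, X_0 + \alpha(X_0) D)$ is log canonical in a neighborhood of $X_0$, and evaluating log discrepancies at $E$ yields
\begin{equation*}
A_X(E) - m_E \;\geq\; \alpha(X_0) \cdot \ord_E(p^* D).
\end{equation*}
Symmetrically, $A_Y(E) - m_E \geq \alpha(Y_0) \cdot \ord_E(q^* D')$ for any analogous effective $\bQ$-divisor $D'$ on $Y$.

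The final step, and the main obstacle, is to produce a matching pair of divisors $D$ on $X$ and $D'$ on $Y$ such that $\ord_E(p^*D) + \ord_E(q^*D') \geq (A_X(E) - m_E) + (A_Y(E) - m_E)$; summing the two $\alpha$-bounds above then forces $\alpha(X_0) + \alpha(Y_0) \leq 1$, contradicting the hypothesis. To carry out this comparison, I would first shrink $C$ around $0$ so that $\cO_X(X_0) \cong \cO_X$ and $\cO_Y(Y_0) \cong \cO_Y$, which allows freely twisting by multiples of the central fibers. I then use $\rho$ to transfer effective $\bQ$-divisors between the two families via strict transforms through $W$; the identity $p^*(-K_{X/C}) \sim_{\bQ} q^*(-K_{Y/C}) + \sum_i c_i E_i$ on $W$, where $\sum_i c_i E_i$ is supported on $W_0$, together with the relative ampleness of $-K_{X/C}$ and $-K_{Y/C}$, should allow one to build such a pair with the prescribed vanishing along $E$. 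The delicate point here is the bookkeeping of the multiplicities along $E$ and the other components of $W_0$ when taking closures and pushforwards: this is where the plt structure of $(X,X_0)$ and $(Y,Y_0)$ and the explicit combinatorics of the fiber $W_0$ have to be carefully exploited.
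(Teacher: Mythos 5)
Your setup is sound as far as it goes: forming the normalized graph $W$ with projections $p,q$, noting that $(X,X_0)$ and $(Y,Y_0)$ are plt by inversion of adjunction, and deriving $A_X(E)-m_E\ge\alpha(X_0)\cdot\ord_E(p^*D)$ for $D\sim_\bQ-K_{X/C}$ effective with $X_0\not\subseteq\Supp(D)$ are all correct. The genuine gap is the endgame, which you yourself flag as ``the main obstacle'': you never produce the divisors $D,D'$, and, worse, the inequality you aim for would not yield the contradiction even if you had them. From $A_X(E)-m_E\ge\alpha(X_0)\ord_E(p^*D)$, $A_Y(E)-m_E\ge\alpha(Y_0)\ord_E(q^*D')$ and $\ord_E(p^*D)+\ord_E(q^*D')\ge\bigl(A_X(E)-m_E\bigr)+\bigl(A_Y(E)-m_E\bigr)$ one only deduces $\bigl(A_X(E)-m_E\bigr)\bigl(\alpha(X_0)^{-1}-1\bigr)+\bigl(A_Y(E)-m_E\bigr)\bigl(\alpha(Y_0)^{-1}-1\bigr)\ge 0$, which is vacuous: for instance $\alpha(X_0)=\alpha(Y_0)=3/5$ with all four quantities equal to $1$ satisfies every one of your inequalities while $\alpha(X_0)+\alpha(Y_0)>1$. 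To force $\alpha(X_0)+\alpha(Y_0)\le 1$ you would need the stronger ``crossed'' statement that a single $D$ (a strict transform of a suitable divisor from $Y$) satisfies $\ord_E(p^*D)\ge(A_X(E)-m_E)+(A_Y(E)-m_E)$, and symmetrically for $D'$; producing such divisors is precisely the hard content you postpone, and it is essentially equivalent to the separatedness statement being proved. A further problem is your choice of $E$: a component with both $A_{(X,X_0)}(E)>0$ and $A_{(Y,Y_0)}(E)>0$ need not exist even after swapping the roles of $X$ and $Y$ (e.g.\ when $W_0$ consists only of the two strict transforms of $X_0$ and $Y_0$), and in the standard argument the decisive divisor is exactly the strict transform of $Y_0$, i.e.\ the one you discard.

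For comparison, the paper's proof avoids all of this bookkeeping. It takes general $D_1\in|-mK_{X_0}|$, $D_2\in|-mK_{Y_0}|$, extends them to $D_{X,1}\sim_\bQ-mK_X$, $D_{Y,2}\sim_\bQ-mK_Y$ not containing the central fibers, lets $D_{Y,1},D_{X,2}$ be the strict transforms across $\rho$, and sets $D_X=\frac{\alpha(Y_0)}{\alpha(X_0)+\alpha(Y_0)}D_{X,1}+\frac{\alpha(X_0)}{\alpha(X_0)+\alpha(Y_0)}D_{X,2}$ with $D_Y$ its strict transform. The hypothesis $\alpha(X_0)+\alpha(Y_0)>1$ enters only to make these coefficients strictly smaller than the respective alpha invariants, so that $(X_0,\frac1m D_X|_{X_0})$ and $(Y_0,\frac1m D_Y|_{Y_0})$ are klt; then $(X,\frac1m D_X)$ and $(Y,\frac1m D_Y)$ are log Calabi--Yau over $C$ and the extension of $\rho$ is the known uniqueness-of-degeneration lemma (\cite[Theorem 5.2]{LWX}, \cite[Proposition 3.2]{BX18}), whose proof is the crepant-pullback/negativity-lemma argument your plan is implicitly trying to recreate; the decisive point there is that the strict transform of $Y_0$ would be an lc place of $(X,X_0+\frac1m D_X)$ with center of codimension at least two inside $X_0$, contradicting pltness near $X_0$. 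If you want a self-contained argument, prove that lemma directly rather than chase the multiplicity inequality you propose.
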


\begin{proof}
By assumption $X$ is birational to $Y$ over $C$. Let $m$ be a sufficiently large and divisible integer and let $D_1\in |-mK_{X_0}|$, $D_2\in |-mK_{Y_0}|$ be general divisors in the corresponding linear systems. Choose effective divisors $D_{X,1}\sim_\bQ -mK_X$, $D_{Y,2}\sim_\bQ -mK_Y$ not containing $X_0$ or $Y_0$ such that $D_{X,1}|_{X_0}=D_1$ and $D_{Y,2}|_{Y_0}=D_2$. Let $D_{Y,1}$ and $D_{X,2}$ be their strict transforms to the other family. Let 
$D_X=\frac{\alpha(Y_0)}{\alpha(X_0)+\alpha(Y_0)}D_{X,1}+\frac{\alpha(X_0)}{\alpha(X_0)+\alpha(Y_0)}D_{X,2}$ and similar $D_Y=\frac{\alpha(Y_0)}{\alpha(X_0)+\alpha(Y_0)}D_{Y,1}+\frac{\alpha(X_0)}{\alpha(X_0)+\alpha(Y_0)}D_{Y,2}$. Apparently $D_Y$ is the strict transform of $D_X$. As $D_1$ is a general member of a very ample linear system and $D_{X,2}|_{X_0}\sim_\bQ -mK_{X_0}$, by the definition of alpha invariant we see that $(X_0,\frac{1}{m}D_X|_{X_0})$ is klt. Similarly $(Y_0,\frac{1}{m}D_Y|_{Y_0})$ is klt. The result then follows from the next lemma.
\end{proof}

\begin{lem}
Let $f:X\rightarrow C$, $g:Y\rightarrow C$ be $\bQ$-Gorenstein flat families of $\bQ$-Fano varieties over a smooth pointed curve $0\in C$ with central fibers $X_0$ and $Y_0$. Let $D_X\sim_\bQ -K_X$, $D_Y\sim_\bQ -K_Y$ be effective $\bQ$-divisors not containing $X_0$ or $Y_0$. Assume that there exists an isomorphism 
\[\rho:(X,D_X)\times_C C^\circ \cong (Y,D_Y)\times_C C^\circ\]
over $C^\circ=C\backslash 0$, that $(X_0,D_X|_{X_0})$ is klt and $(Y_0,D_Y|_{Y_0})$ is lc. Then $\rho$ extends to an isomorphism $(X,D_X)\cong (Y,D_Y)$.
\end{lem}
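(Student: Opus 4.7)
The plan is to show that the divisorial valuations $\ord_{X_0}$ and $\ord_{Y_0}$ agree on the common function field $K(X)=K(Y)$ (identified via $\rho$), and then to recover $X$ and $Y$ over $C$ as relative $\Proj$ of their anti-canonical rings.

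Shrinking $C$ around $0$, I may assume $\rho$ is an isomorphism off the central fibers and that no other fiber contributes. By Kawakita's inversion of adjunction, the klt hypothesis on $(X_0, D_X|_{X_0})$ upgrades to ``$(X, X_0+D_X)$ is plt near $X_0$'', and the lc hypothesis on $(Y_0, D_Y|_{Y_0})$ to ``$(Y, Y_0+D_Y)$ is lc near $Y_0$''. I then take a common log resolution $\pi: W \to C$ fitting into
\[
X \xleftarrow{\,p\,} W \xrightarrow{\,q\,} Y
\]
compatibly with $\rho$, and write the log pullback identities $K_W + B_X = p^*(K_X+D_X)$ and $K_W + B_Y = q^*(K_Y+D_Y)$. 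Since $K_X+D_X \sim_\bQ 0 \sim_\bQ K_Y+D_Y$, subtracting gives $B_X - B_Y \sim_\bQ 0$. Because $\rho$ identifies $(X, D_X)$ with $(Y, D_Y)$ over $C^\circ$, the log discrepancies of every horizontal prime divisor over $W$ agree for both pairs; hence $B_X - B_Y$ is supported on components of $W_0$.

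Since $\pi_* \cO_W = \cO_C$ (connected fibers), any rational function on $W$ with divisor supported on $W_0$ is the pullback of a rational function on $C$ with divisor supported at $\{0\}$. This forces $B_X - B_Y = \lambda W_0$ for some $\lambda \in \bQ$. Let $\tX_0, \tY_0$ be the birational transforms of $X_0, Y_0$ in $W$; both appear with multiplicity $1$ in $W_0 = p^*X_0 = q^*Y_0$. Suppose for contradiction $\tX_0 \ne \tY_0$. Since $p$ is an isomorphism at the generic point of $X_0$, the birational transform is unique, so $\tX_0$ is $q$-exceptional and $\tY_0$ is $p$-exceptional. Comparing coefficients of $\tX_0$ and $\tY_0$ in $B_X - B_Y = \lambda W_0$ gives
\[
\lambda = a_{(Y, D_Y)}(\tX_0) - 1 = a_{(Y, Y_0+D_Y)}(\tX_0) \ge 0
\]
by the lc condition on $Y$, while
\[
\lambda = 1 - a_{(X, D_X)}(\tY_0) = -\, a_{(X, X_0+D_X)}(\tY_0) < 0
\]
by the plt condition on $X$ (strict inequality, since $\tY_0$ is a divisorial valuation on $K(X)$ distinct from $\ord_{X_0}$). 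This contradiction forces $\tX_0 = \tY_0$ in $W$, equivalently $\ord_{X_0} = \ord_{Y_0}$ as valuations on $K(X) = K(Y)$.

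Finally, after further shrinking $C$, $-K_X$ is $f$-ample and $-K_Y$ is $g$-ample, so for any sufficiently divisible $r>0$ we have $X = \Proj_C \bigoplus_m f_* \cO_X(-mrK_X)$ and $Y = \Proj_C \bigoplus_m g_* \cO_Y(-mrK_Y)$. A rational section of $\cO_X(-mrK_X)$ on $C^\circ$ extends across $X_0$ iff $\ord_{X_0}$ is non-negative on it, and similarly for $Y$. The equality of valuations then shows that, under the identification coming from $\rho$, the two anti-canonical $\cO_C$-algebras coincide, and taking $\Proj_C$ delivers the extension of $\rho$ to an isomorphism $X \cong Y$ over $C$ carrying $D_X$ to $D_Y$. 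The main obstacle is the coefficient comparison on $\tX_0$ and $\tY_0$: one must carefully track which transform is exceptional on which side, and it is essential that plt gives a strict inequality for divisors different from $X_0$ while lc only gives the non-strict one, which is precisely what produces the contradiction.
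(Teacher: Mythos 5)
Your argument is correct and is essentially the same as the one the paper invokes by citation (the separatedness argument of \cite[Theorem 5.2]{LWX}, \cite[Proposition 3.2]{BX18}): inversion of adjunction, a common resolution with comparison of the coefficients of the two strict transforms of the central fibers to force $\ord_{X_0}=\ord_{Y_0}$, and then recovery of both families as $\Proj_C$ of the relative pluri-anticanonical algebras. The only cosmetic difference is that you deduce $B_X-B_Y=\lambda W_0$ from global $\bQ$-linear triviality and $\pi_*\cO_W=\cO_C$ rather than from the negativity lemma, which is a valid substitute here since $K_X+D_X\sim_\bQ 0$ globally.
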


\begin{proof}
This follows from the exact same proof of \cite[Theorem 5.2]{LWX} (see also \cite[Proposition 3.2]{BX18}).
\end{proof}

\begin{rem}
When the total spaces have terminal singularities, Proposition \ref{prop:alpha sum>1} is proved by Cheltsov \cite[Theorem 1.5]{C-alpha>1}.
\end{rem}

\begin{proof}[Proof of Theorem \ref{thm:alpha-moduli}]
Boundedness  follows from \cite{Jia-boundedness};  openness follows from \cite[Theorem B]{BL18b}; separatedness follows from Proposition \ref{prop:alpha sum>1}; the existence of coarse moduli space is due to Keel and Mori \cite{KM97}. 
\end{proof}

We believe such moduli spaces are interesting and deserve further investigation. See \cite[Corollary 1.4]{BX18} for an analogous result on moduli of uniformly K-stable $\bQ$-Fano varieties.

We also give an alternative proof of the finiteness of automorphism group in Theorem \ref{thm:alpha-moduli} without using stacks.

\begin{cor}
Let $X$ be a $\bQ$-Fano variety such that $\alpha(X)>\frac{1}{2}$, then $\Aut(X)$ is finite.
\end{cor}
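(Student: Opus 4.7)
The plan is to argue by contradiction using Proposition \ref{prop:alpha sum>1}. Suppose $\Aut(X)$ is infinite. Since $X$ is $\bQ$-Fano, some multiple of $-K_X$ is very ample and $\Aut(X)$ embeds into $\mathrm{PGL}_N$, so it is an affine algebraic group of finite type, and our assumption forces its identity component $\Aut^0(X)$ to be positive-dimensional. By the structure theory of connected affine algebraic groups, $\Aut^0(X)$ then contains either a one-dimensional subtorus $\bG_m$ or, failing that, a non-trivial one-parameter subgroup $\phi\colon \bG_a \to \Aut(X)$ inside its unipotent radical.

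From this I will manufacture an algebraic morphism $\sigma\colon\bG_m\to\Aut(X)$ that does \emph{not} extend to any morphism $\bA^1\to\Aut(X)$. In the $\bG_m$ case, take $\sigma$ to be the inclusion: any hypothetical extension $\tau\colon\bA^1\to\Aut(X)$ would, by irreducibility and dimension count, factor through the closed subgroup $\sigma(\bG_m)\cong\bG_m$, and this would produce a non-constant morphism $\bA^1\to\bG_m$, which does not exist. In the $\bG_a$ case, set $\sigma(t):=\phi(1/t)$; an extension to $\bA^1$ would be equivalent to extending $\phi$ to a morphism $\bP^1\to\Aut(X)$, which must be constant because $\Aut(X)$ is affine and $\bP^1$ is proper, contradicting the non-triviality of $\phi$.

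Finally I would apply Proposition \ref{prop:alpha sum>1} to the trivial families $\cX=\cY=X\times\bA^1\to\bA^1$ and the family isomorphism $\rho\colon\cX|_{\bG_m}\xrightarrow{\sim}\cY|_{\bG_m}$ defined by $(x,t)\mapsto(\sigma(t)(x),t)$. Both central fibers are $X$, so $\alpha(\cX_0)+\alpha(\cY_0)=2\alpha(X)>1$, and the proposition extends $\rho$ to an isomorphism $\tilde\rho\colon\cX\to\cY$ over $\bA^1$. But any such $\tilde\rho$ has the form $(x,t)\mapsto(\tau(t)(x),t)$ for some morphism $\tau\colon\bA^1\to\Aut(X)$ restricting to $\sigma$ on $\bG_m$, contradicting the second step. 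The main obstacle I anticipate is the $\bG_a$ case: one must first invoke structure theory to produce a $\bG_a$-subgroup when no one-dimensional torus is present, and then convert $\phi$ into the non-extending $\bG_m$-morphism $\sigma$ via the inversion $t\mapsto 1/t$. Once $\sigma$ is in hand, the concluding step is an immediate translation of Proposition \ref{prop:alpha sum>1}.
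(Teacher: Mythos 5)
Your proposal is correct and follows essentially the same route as the paper: reduce infiniteness of $\Aut(X)$ to the existence of a $\bG_m$ or $\bG_a$ inside it, apply Proposition \ref{prop:alpha sum>1} to the trivial families $X\times\bA^1$ twisted by the one-parameter subgroup (with the inversion $t\mapsto t^{-1}$ in the $\bG_a$ case), and derive a contradiction from the non-extendability of the resulting morphism ($\bA^1\to\bG_m$ must be constant, and $\bP^1\to\Aut(X)$ must be constant since $\Aut(X)$ is affine). Your write-up merely spells out the structure-theory and extension details that the paper leaves implicit.
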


\begin{proof}
Assume that $\Aut(X)$ is not finite, then it contains $\bG_m$ or $\bG_a$. Let $\cX_1=\cX_2=X\times \bA^1$. If $\bG_m\subseteq\Aut(X)$, then it induces an isomorphism $\cX_1\times_{\bA^1} (\bA^{1}\setminus\{0\}) \stackrel{\sim}{\longrightarrow} \cX_2\times_{\bA^1} (\bA^{1}\setminus\{0\})=X\times (\bA^{1}\setminus\{0\})$ through the diagonal action of $\bG_m$. Since $\alpha(X)>\frac{1}{2}$, this map extends to an isomorphism of $\cX_1$ and $\cX_2$ over $\bA^1$ by Proposition \ref{prop:alpha sum>1}. Thus the map $\bG_m\rightarrow\Aut(X)$ extends to a map $\bA^1\rightarrow\Aut(X)$, a contradiction. Similarly, if $\bG_a\subseteq\Aut(X)$, then the inclusion $\bA^{1}\setminus\{0\}\subseteq \bG_a$ given by $t\mapsto t^{-1}$ induces an automorphism of $X\times(\bA^{1}\setminus\{0\})$ over $\bA^{1}\setminus\{0\}$ which extends to an automorphism over $\bA^1$, hence $\bG_a\rightarrow\Aut(X)$ extends to a map $\bP^1\rightarrow \Aut(X)$, again a contradiction.
\end{proof}

Another quick consequence of Theorem \ref{thm:fujalpha} is a refinement of Tian's criterion in small dimension. Note that the dimension $2$ case was proved by K. Fujita \cite{Fujita-private}.

\begin{cor}
Let $X$ be a $\bQ$-Fano variety such that $\alpha(X)= \frac{n}{n+1}$ where $n=\dim X$. Assume that either $n=2$ or $n=3$ and $X$ has terminal singularities. Then $X$ is K-stable.
\end{cor}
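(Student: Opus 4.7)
The plan is to argue by contradiction. Assume $X$ is not K-stable; then Theorem \ref{thm:fujalpha} produces a Koll\'ar component $F$ over a point $x \in X$ satisfying $A_X(F) = n$, admitting a fibration $\pi: Y \to Z$ whose generic fiber is $\bP^1$ and with $\pi|_F: F \xrightarrow{\sim} Z$, and such that $(F, \Delta_F)$ is log Fano with $\alpha(F, \Delta_F) \geq 1$ and $-K_F - \Delta_F \sim_\bQ -n\, F|_F$. I will show that these constraints are already inconsistent in each of the stated low-dimensional settings.

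For $n = 2$: since $F$ is a normal projective curve with $(F, \Delta_F)$ log Fano, $F \cong \bP^1$; write $\Delta_F = \sum_{i=1}^k (1 - 1/m_i) P_i$ with $m_i \geq 2$. Testing $\alpha(\bP^1, \Delta_F) \geq 1$ against $\bQ$-divisors supported at a single point of $\bP^1$ with total degree $\deg(-K_F - \Delta_F)$ forces $k = 3$ and confines the triple $(m_1, m_2, m_3)$ to the ``spherical'' list $\{(2,2,m)_{m \geq 2},\ (2,3,3),\ (2,3,4),\ (2,3,5)\}$ (smaller $k$ and non-spherical triples violate the test). The $\bQ$-Cartier class $(-F)|_F \in \Pic(\bP^1) \otimes \bQ$ is then constrained twice: its degree equals $\tfrac{1}{2}\deg(-K_F - \Delta_F)$ via the adjunction, while its fractional part is forced (by $\Delta_{(-F)|_F} = \Delta_F$) to take the form $\sum_i (a_i/m_i) P_i$ with $0 < a_i < m_i$ and $\gcd(a_i, m_i) = 1$. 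The resulting degree of the integral part $\lfloor (-F)|_F \rfloor$, namely $\deg(-F|_F) - \sum_i a_i/m_i$, fails to be an integer for any admissible choice of $(a_i)$---for instance, in the $(2,2,m)$ case it simplifies to $-1 + (1 - 2a_3)/(2m)$, where $1 - 2a_3$ is odd while $2m$ is even; direct verification of the three remaining triples yields the same kind of arithmetic obstruction. This contradicts the fact that $\lfloor (-F)|_F \rfloor$ must be a bona fide $\bZ$-divisor.

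For $n = 3$ with $X$ terminal: if $x$ is a smooth point of $X$, then the unique Koll\'ar component over $x$ with $A_X(F) = 3$ is the ordinary blowup (forcing all weights in a toric description to equal $1$), giving $F \cong \bP^2$ and $\Delta_F = 0$; but $\alpha(\bP^2, 0) = 1/3 < 1$ contradicts $\alpha(F, \Delta_F) \geq 1$. Hence $x$ is a genuine terminal 3-fold singularity. The plan is then to enumerate the two-dimensional log Fano pairs $(F, \Delta_F)$ with standard coefficients and $\alpha(F, \Delta_F) \geq 1$ that can arise as Koll\'ar components over terminal 3-fold singularities, and in each case to apply the analogous integrality obstruction---matching the fractional part of $(-F)|_F$ against the degree dictated by $-K_F - \Delta_F \sim_\bQ -3\, F|_F$---to rule it out. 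The principal obstacle is precisely this enumeration: combining the Mori--Reid classification of terminal 3-fold singularities with the strong restriction $\alpha(F, \Delta_F) \geq 1$ on the two-dimensional log del Pezzo pair is substantially more intricate than the elementary curve analysis used for $n = 2$, and controlling the divisor class $(-F)|_F$ at the finitely many boundary strata of the possible $(F, \Delta_F)$ is the technical heart of the argument.
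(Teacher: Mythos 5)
Your $n=2$ argument is correct, and it is genuinely different from the paper's. The paper disposes of the surface case by classifying weakly exceptional klt surface singularities: the dual graph of the minimal resolution must have a fork, the unique Koll\'ar component is the central fork living on the minimal resolution, and hence has log discrepancy at most $1<2$, contradicting $A_X(F)=n$ from Theorem \ref{thm:fujalpha}. You instead exploit the full numerical package of Theorem \ref{thm:fujalpha} ($F\cong\bP^1$, $\alpha(F,\Delta_F)\ge 1$ forcing the spherical triples, $-K_F-\Delta_F\sim_{\bQ}-2F|_F$, and the plt-adjunction fact that $\{F|_F\}$ has coefficient $a_i/m_i$ with $\gcd(a_i,m_i)=1$ at each point of $\Delta_F$ --- the same fact the paper uses in the proof of Corollary \ref{cor:polystable degeneration}), and you derive a contradiction from the integrality of $\deg\lfloor -F|_F\rfloor$. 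I checked the arithmetic: in all four families $(2,2,m)$, $(2,3,3)$, $(2,3,4)$, $(2,3,5)$ the required divisibility fails (odd numerator against even denominator, or $12\nmid 5+4(b_2+b_3)$ in the $(2,3,3)$ case), so this route does close the $n=2$ case; it buys a self-contained numerical proof at the cost of not identifying where the Koll\'ar component actually lives.

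The genuine gap is the $n=3$ terminal case, which you do not prove: you only state a plan and yourself flag the enumeration of the possible log del Pezzo pairs $(F,\Delta_F)$ with standard coefficients, $\alpha(F,\Delta_F)\ge 1$ and $-K_F-\Delta_F\sim_{\bQ}-3F|_F$ arising over terminal $3$-fold points as the unresolved ``technical heart.'' As written this is a missing step, not an argument: terminal $3$-fold singularities and their Koll\'ar components are not toric in general, so weight-counting does not apply, there is no a priori reason the degree/fractional-part obstruction that worked on $\bP^1$ suffices on a surface (where one must also control which Weil divisor classes $(-F)|_F$ can occur), and no classification is actually carried out. The paper avoids all of this by quoting Prokhorov \cite[Corollary 4.8]{Pro00}, which rules out a weakly exceptional terminal $3$-fold singularity whose Koll\'ar component has log discrepancy $3$; some such input (or a completed enumeration) is needed before your proof is complete. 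A smaller, fixable issue: in your smooth-point reduction the phrase ``forcing all weights in a toric description to equal $1$'' presumes the Koll\'ar component is a weighted blowup; the clean statement is that for any exceptional divisor $E$ with center $\{x\}$ on a smooth $n$-fold other than the ordinary blowup divisor $E_1$, one has $A_X(E)=A_{Y_1}(E)+\ord_E(E_1)\,(n-1)\ge 2+(n-1)=n+1$, so $A_X(E)=n$ forces $E=E_1$.
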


\begin{proof}
By Theorem \ref{thm:fujalpha}, it suffices to show that under the given assumptions, $X$ cannot have a weakly exceptional singularity whose corresponding Koll\'ar component has log discrepancy $n$. For 3-fold terminal singularities, this follows from \cite[Corollary 4.8]{Pro00}. In the surface case, it is well known (and not hard to verify) that a klt surface singularity is weakly exceptional if and only if the dual graph of the exceptional divisors on the minimal resolution has a fork, in which case the Koll\'ar component lives on the minimal resolution and corresponds to the central fork. In particular, the log discrepancy of the Koll\'ar component is at most $1$ and does not satisfy our requirement.
\end{proof}

From Corollary \ref{cor:polystable degeneration} we see that K-semistable $\bQ$-Fano varieties with smallest $\alpha$-invariant naturally occur in studying non-K-stable $\bQ$-Fano varieties with largest $\alpha$-invariant. In \cite[Theorem 1.2]{Jia17}, Jiang showed that a K-semistable Fano manifold $X$ with $\alpha(X)=\frac{1}{\dim(X)+1}$ is isomorphic to projective space. The following result provides a characterization of possibly singular such $X$.

\begin{thm}\label{thm:jiang}
Let $X$ be an $n$-dimensional K-semistable $\bQ$-Fano variety satisfying that $\alpha(X)=\frac{1}{n+1}$. Then there exists an $(n-1)$-dimensional K-polystable log Fano pair $(V,\Delta)$ where $\Delta$ has standard coefficients, and $M$ an ample $\bQ$-Cartier $\bQ$-divisor on $V$ satisfying $nM\sim_{\bQ}-K_V-\Delta$ and $\Delta_M=\Delta$, such that $C_p(V,M)$ is the unique K-polystable special degeneration of $X$. In addition, $\alpha(C_p(V,M))=\frac{1}{n+1}$.
\end{thm}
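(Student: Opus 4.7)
The plan is to first reduce to the case when $X$ is itself K-polystable, and then realize such $X$ as an orbifold cone $C_p(V,M)$ by extracting an extremal valuation from the $\alpha$-equality and feeding it into Lemma~\ref{lem:normalcone}.

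For the reduction, I would invoke \cite{LWX18}: the K-semistable $X$ specially degenerates to a unique K-polystable $\bQ$-Fano variety $X_0$. Lower semicontinuity of $\alpha$ along such degenerations (compare \cite{BL18b}), combined with the bound $\alpha\ge\tfrac{1}{n+1}$ for K-semistable Fanos from \cite{FO16}, gives $\alpha(X_0)=\tfrac{1}{n+1}$. Since uniqueness of the K-polystable degeneration is already contained in \cite{LWX18}, it suffices to produce $(V,\Delta)$ and $M$ when $X$ itself is K-polystable, which I henceforth assume.

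The next step is to extract an extremal divisor. By the valuative formula for the $\alpha$-invariant (in the spirit of Blum--Jonsson), there exists a quasi-monomial valuation $v$ over $X$ computing $\alpha$, i.e.\ $A_X(v)=T(v)/(n+1)$, where $T(v):=\sup\{v(D):D\sim_\bQ -K_X\text{ effective}\}$. The inequality chain
\[
\frac{T(v)}{n+1}\le\frac{\tau(v)}{n+1}\le S_X(v)\le A_X(v)=\frac{T(v)}{n+1},
\]
in which the middle step is Brunn--Minkowski concavity of $t\mapsto\vol_X^{1/n}(-K_X-tv)$ and the rightmost is K-semistability via Theorem~\ref{thm:beta-criterion}, forces all four quantities to coincide. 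In particular $A_X(v)=S_X(v)$ and $\vol_X(-K_X-tv)=(1-t/\tau(v))^n(-K_X)^n$ for $t\in[0,\tau(v)]$. Since $X$ is K-polystable, the rigidity results of \cite{LWX18,BX18} then imply that $v$ is induced by a $1$-parameter subgroup $\lambda:\bG_m\hookrightarrow\Aut(X)$. The ``cone volume'' identity further forces $\lambda$ to possess a codimension-one fixed prime divisor $V\subset X$ with $v$ proportional to $\ord_V$: indeed, the identity forces the Newton--Okounkov body of $(-K_X,v)$ to be a full simplex, ruling out $1$-PS whose fixed loci live only in higher codimension. Rescaling so that $A_X(V)=1$ gives $\tau(V)=n+1$, $-K_X\sim_\bQ(n+1)V$, and $(X,V)$ plt.

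With $V$ in hand, I would apply Lemma~\ref{lem:normalcone} with $r=n$ and $M:=V|_V$ to produce a special test configuration of $X$ with central fiber $C_p(V,M)$ and vanishing generalized Futaki invariant. K-polystability of $X$ then forces this to be a product test configuration, whence $X\cong C_p(V,M)$. Setting $\Delta:=\Delta_M$, the orbifold cone dictionary (Proposition~\ref{prop:kpscone} and the remarks preceding it) yields a log Fano pair $(V,\Delta)\cong(V_\infty,\Diff_{V_\infty}(0))$ with $\Delta$ having standard coefficients, together with $nM\sim_\bQ-K_V-\Delta$ (from $1+r=n+1$). K-polystability of $(V,\Delta)$ follows from the ``only if'' direction of Proposition~\ref{prop:kpscone} applied to K-polystable $X=C_p(V,M)$. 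Finally, $\alpha(C_p(V,M))=\tfrac{1}{n+1}$ is automatic: the upper bound comes from the divisor $(n+1)V_\infty\sim_\bQ-K_X$ together with $(X,V_\infty)$ plt, and the lower bound is \cite{FO16}. The hardest step of the argument is the volume-to-cone deduction in the previous paragraph: upgrading the quasi-monomial $1$-PS valuation to a divisorial one with codimension-one center on $X$ itself via a Newton--Okounkov body rigidity argument that exploits the exact saturation of Brunn--Minkowski.
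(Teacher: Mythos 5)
Your reduction to the K-polystable degeneration $X'$ (via \cite{LWX18}, \cite{BL18b}, \cite{FO16}) and your endgame (Lemma \ref{lem:normalcone} plus K-polystability to force $X'\cong C_p(V,M)$, then Proposition \ref{prop:kpscone}, adjunction for $(V,\Delta)$ with standard coefficients, $nM\sim_\bQ -K_V-\Delta$, and the computation of $\alpha$) coincide with the paper's proof. The difference, and the problem, is the middle step producing a prime divisor $V\subset X'$ with $(X',V)$ plt and $-K_{X'}\sim_\bQ(n+1)V$: the paper simply quotes \cite[Proposition 3.1]{Jia17}, which is stated precisely for K-semistable $\bQ$-Fano varieties with $\alpha=\frac{1}{n+1}$, whereas you attempt to rederive it, and that derivation has genuine gaps.

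Concretely: (i) the claim that on a K-polystable $X$ a quasi-monomial valuation $v$ with $A_X(v)=S_X(v)$ (in your notation) is induced by a one-parameter subgroup of $\Aut(X)$ is not contained in \cite{LWX18} or \cite{BX18}; turning such a valuation into a (product) special test configuration requires finite generation of the associated graded ring, i.e. dreaminess of $v$, which is not known for an $\alpha$-computing quasi-monomial valuation with the tools available here -- Theorem \ref{thm:beta-criterion} only handles dreamy divisors, so this is exactly the hard ingredient you cannot wave through. (ii) The assertion that the simplex shape of the Newton--Okounkov body ``rules out'' one-parameter subgroups whose fixed loci have higher codimension is a heuristic, not an argument; it could plausibly be made rigorous via differentiability of the volume (a center of codimension $\ge 2$ forces $\tfrac{\rd}{\rd t}\vol_X(-K_X-tv)|_{t=0}=0$, contradicting the identity $\vol_X(-K_X-tv)=(1-t/\tau)^n((-K_X)^n)$), but as written nothing is proved. (iii) Even granting $v=c\cdot\ord_V$ for a prime divisor $V$ on $X$, you assert without justification both that $-K_X\sim_\bQ(n+1)V$ on the nose (this needs the supremum defining $T(v)$ to be attained by a divisor equal to $(n+1)V$ rather than $(n+1)V+D'$ with $D'\ge 0$ effective) and that $(X,V)$ is plt; neither follows from ``rescaling so that $A_X(V)=1$,'' and both are exactly the content of \cite[Proposition 3.1]{Jia17}. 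So the proposal is structurally the paper's proof with its key citation replaced by an incomplete sketch; you should either supply these missing arguments or invoke Jiang's proposition as the paper does.
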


\begin{proof}
Let $X'$ be the unique K-polystable special degeneration of $X$ whose existence is proved in \cite[Theorem 1.3]{LWX18}. Then we have $\alpha(X')\leq \alpha(X)=\frac{1}{n+1}$ by \cite[Theorem B]{BL18b} and $\alpha(X')\geq \frac{1}{n+1}$ by \cite[Theorem 3.5]{FO16}. Thus $\alpha(X')=\frac{1}{n+1}$. Then by \cite[Proposition 3.1]{Jia17}, there exists a prime divisor $V$ on $X'$ such that $(X',V)$ is plt and $-K_{X'}\sim_{\bQ}(n+1)V$. Let $M\sim V|_V$ be a $\bQ$-Cartier $\bQ$-divisor on $V$. Thus Lemma \ref{lem:normalcone} implies that there exists a special test configuration $(\cX';\cL')$ of $X'$ such that the central fiber $\cX_0'$ is isomorphic to $C_p(V,M)$ and $\Fut(\cX';\cL')=0$. Since $X'$ is K-polystable, we have $X'\cong\cX_0'\cong C_p(V,M)$. Denote $\Delta:=\Diff_V(0)$, then it is clear that $\Delta$ has standard coefficients, $\Delta=\Delta_M$, and 
\[
-K_V-\Delta=(-K_X-V)|_V\sim_{\bQ} nV|_V=nM.
\]
Since $C_p(V,M)\cong X'$ is K-polystable, the log Fano pair $(V,\Delta)$ is also K-polystable by Proposition \ref{prop:kpscone}. Hence we finish the proof.
\end{proof}

The next result shows that in Theorem \ref{thm:fujalpha} the different divisor $\Delta_F$ never vanishes. Note that Proposition \ref{prop:Delta-nonzero} and Lemma \ref{lem:diffzero} are not needed in the rest of the paper.

\begin{prop}\label{prop:Delta-nonzero}
Assume $n\geq 2$. Then with the notation of Theorem \ref{thm:fujalpha}, we have $\Delta_F\neq 0$.
\end{prop}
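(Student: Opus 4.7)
The plan is to argue by contradiction. Suppose $\Delta_F=0$; I will derive a contradiction with the inequality $\alpha(F,\Delta_F)\geq 1$ established in Theorem \ref{thm:fujalpha}(3). The key idea is that $\Delta_F=0$ combined with the numerical datum $A_X(F)=n$ forces $F$ to be a klt Fano variety of maximal Fano $\bQ$-index, hence $F\cong\bP^{n-1}$ by Kobayashi-Ochiai, which contradicts $\alpha(F)\geq 1$ because $\alpha(\bP^{n-1})=\frac{1}{n}$.

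First I would compute the adjunction explicitly. Since $A_X(F)=n$ by Theorem \ref{thm:fujalpha}(1), we have $K_Y=\sigma^*K_X+(n-1)F$, and because $\sigma$ contracts $F$ to a point, $\sigma^*K_X|_F\sim_{\bQ} 0$. Hence $(K_Y+F)|_F\sim_{\bQ} nF|_F$, and adjunction on the plt pair $(Y,F)$ gives $K_F+\Delta_F\sim_{\bQ} nF|_F$. Under the assumption $\Delta_F=0$ this becomes $-K_F\sim_{\bQ} nL$, where $L:=-F|_F$ is ample $\bQ$-Cartier on $F$ (since $-F$ is $\sigma$-ample).

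Since $\dim F=n-1$, we have exhibited $F$ as a klt Fano variety satisfying $-K_F\sim_\bQ (\dim F+1)L$ with $L$ ample $\bQ$-Cartier. An appropriate extension of the Kobayashi-Ochiai theorem to normal projective varieties with klt singularities and ample $\bQ$-Cartier polarization (presumably the content of Lemma \ref{lem:diffzero}) then forces $F\cong\bP^{n-1}$.

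To conclude, $\alpha(\bP^{n-1})=\frac{1}{n}<1$ for $n\geq 2$, which directly contradicts $\alpha(F,\Delta_F)=\alpha(F)\geq 1$. As an alternative finish, $\alpha(F)\geq 1>\frac{n-1}{n}$ would force $F$ to be K-stable by Theorem \ref{thm:tian}(1), whereas $\bP^{n-1}$ is only K-polystable (it has a nontrivial automorphism group), again yielding a contradiction. The substantive obstacle is the Kobayashi-Ochiai step in the klt setting with only $\bQ$-Cartier polarization $L$; verifying this is the essential technical input, and is what Lemma \ref{lem:diffzero} is expected to supply.
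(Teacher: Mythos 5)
Your reduction to adjunction is fine: with $A_X(F)=n$ and $\Delta_F=0$ one indeed gets $-K_F\sim_\bQ nL$ with $L=-F|_F$ ample and $\bQ$-Cartier. But the crucial step --- ``an appropriate extension of Kobayashi--Ochiai forces $F\cong\bP^{n-1}$'' --- is a genuine gap, and it is not what Lemma \ref{lem:diffzero} provides. Kobayashi--Ochiai-type index bounds for klt varieties require the polarization to be an ample \emph{Cartier} divisor; for a polarization that is merely an ample $\bQ$-Cartier Weil divisor class (which is all you have here, since $\Delta_F=0$ only makes $\cO_Y(F)$ locally free in codimension $2$, so $L$ need not be Cartier on $F$), the statement is simply false: for instance $F=\bP(1,\dots,1,m+2)$ of dimension $m$ satisfies $-K_F\sim (m+1)\cdot 2H$ with $2H$ ample and $\bQ$-Cartier, yet $F\not\cong\bP^m$. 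So ``index $\dim F+1$ with respect to a Weil $\bQ$-Cartier class'' does not pin down $\bP^{n-1}$, and your contradiction with $\alpha(F,\Delta_F)\ge 1$ never gets off the ground. (One could hope that adding the hypothesis $\alpha(F)\ge 1$ rules out such examples, but your proof does not use it at that step, and making this work would require a substantive new argument.)

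The paper's route is different and is exactly what Lemma \ref{lem:diffzero} is designed for: $\Delta_F=0$ guarantees that $\cO_Y(F)$ is locally free in codimension $2$, and Theorem \ref{thm:fujalpha}(1) gives $\epsilon(F)=\tau(F)=n+1>n=A_X(F)$, so the lemma applies and its conclusion is about $X$, not $F$: it shows $X\cong C_p(F,\cO_F(-F))$ is a projective orbifold cone, whence $-K_X\sim_\bQ (n+1)F_\infty$ and $\alpha(X)\le\frac{1}{n+1}$. The contradiction is then with $\alpha(X)=\frac{n}{n+1}$ (for $n\ge 2$), not with $\alpha(F,\Delta_F)\ge 1$. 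To repair your proposal you would either have to invoke the cone description of $X$ as in Lemma \ref{lem:diffzero} (i.e.\ follow the paper), or prove a genuinely new statement ruling out klt Fanos $F$ of dimension $n-1$ with $-K_F\sim_\bQ nL$, $L$ ample Weil $\bQ$-Cartier, and $\alpha(F)\ge 1$; the latter is not a known off-the-shelf result.
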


\begin{proof}
Assume to the contrary that $\Delta_F=0$. Then by Theorem \ref{thm:fujalpha} and Lemma \ref{lem:diffzero}, we know that $X\cong C_p(F,\cO_F(-F))$ and $\alpha(X)\leq \frac{1}{n+1}$ which contradicts to $\alpha(X)=\frac{n}{n+1}$.
\end{proof}

\begin{lem}\label{lem:diffzero}
 Let $X$ be a $\bQ$-Fano variety of dimension $n\geq 2$. Let $\sigma: Y\to X$ be
 a proper birational morphism that provides a Koll\'ar component $F$ with 
 $x=\sigma(F)$. Assume that $\epsilon(F)=\tau(F)>A_X(F)$. If $\cO_Y(F)$
 is locally free in codimension $2$, then $X\cong C_p(F,\cO_F(-F))$.
 In particular, $\epsilon(F)=A_X(F)+1$ and $\alpha(X)\leq \frac{1}{A_X(F)+1}$.
\end{lem}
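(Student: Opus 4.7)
The plan is to exhibit $Y$ as a $\bP^1$-bundle over $F$ with $F$ as a section, then identify it with the blow-up of the vertex of $W := C_p(F, \cO_F(-F))$ to conclude $X \cong W$. Set $L := \sigma^*(-K_X)$, $a := A_X(F)$, $c := \epsilon(F) = \tau(F)$, and $D := L - cF$. Since $\epsilon = c$, $D$ is nef; since $\tau = c$, $\vol(D) = D^n = 0$. As $(Y,F)$ is plt, $Y$ is klt, and using $K_Y = -L + (a-1)F$, for any rational $t > 0$ we have
\[
tD - K_Y = (t+1)L - (tc + a - 1)F,
\]
which is ample since $(tc + a - 1)/(t+1) < c$ reduces to $a - 1 < c$, true because $c > a$. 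Kawamata base-point-freeness then gives $D$ semiample; let $\pi\colon Y \to Z$ be its ample model.

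Since $D^n = 0$, we have $\dim Z \le n - 1$; and since $D|_F = -cF|_F$ is ample on $F$ (as $-F|_F$ is $\sigma$-ample), $\pi|_F\colon F \to Z$ is finite. Hence $\dim Z = n - 1$, and every fiber of $\pi$ is one-dimensional. For a general fiber $\phi$, let $d := \deg(\pi|_F)$, so $\phi \cdot F = d$. From $D \cdot \phi = 0$ we obtain $L \cdot \phi = cd$, hence $K_Y \cdot \phi = d(a - 1 - c) < 0$. By generic smoothness together with the adjunction $2g - 2 = K_Y \cdot \phi$, it follows that $\phi \cong \bP^1$ and $K_Y \cdot \phi = -2$. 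Thus $d(c + 1 - a) = 2$, and since $d \in \bZ_{>0}$ with $c + 1 - a > 1$, we must have $d = 1$ and $c = a + 1$. So $\pi|_F$ is a finite birational morphism between normal varieties, hence an isomorphism by Zariski's Main Theorem. Moreover, every fiber of $\pi$ is irreducible and reduced: an extra component $C$ disjoint from $F$ would give $L \cdot C = c\,F \cdot C = 0$ and $\sigma_*C \ne 0$, contradicting the ampleness of $-K_X$. Hence $\pi\colon Y \to F$ is a $\bP^1$-bundle with $F \subset Y$ as a section.

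To identify $Y$ with $Y_W := \bP_F(\cO_F \oplus \cO_F(F|_F))$, push forward the structure sequence $0 \to \cO_Y \to \cO_Y(F) \to \cO_F(F|_F) \to 0$ to obtain
\[
0 \to \cO_F \to \pi_*\cO_Y(F) \to \cO_F(F|_F) \to 0,
\]
using $\pi_*\cO_Y = \cO_F$ and $R^1\pi_*\cO_Y = 0$ on $\bP^1$-bundles. The extension class lies in $H^1(F, \cO_F(-F|_F))$, which vanishes by Kawamata--Viehweg applied to the klt pair $(F, \Delta_F)$ since $-F|_F - (K_F + \Delta_F) = -(a+1)F|_F$ is ample. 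The hypothesis that $\cO_Y(F)$ is locally free in codimension $2$ ensures $\cO_F(F|_F)$ is a line bundle in codimension $1$ on $F$, making this vanishing and splitting rigorous. Hence $\pi_*\cO_Y(F) \cong \cO_F \oplus \cO_F(F|_F)$ and $Y \cong Y_W$; contracting $F$ compatibly yields $X \cong W = C_p(F, \cO_F(-F))$. The ``in particular'' conclusions follow: $\epsilon(F) = a + 1$ was established in the second paragraph; and the other section $V_\infty \subset Y_W$ descends to a prime divisor $V_\infty^X \subset X$ with $(a+1)V_\infty^X \sim_\bQ -K_X$ and $(X, V_\infty^X)$ plt (a standard property of orbifold cones), giving $\alpha(X) \le \lct(X; (a+1)V_\infty^X) = 1/(a+1)$. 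The main obstacle is making the Kawamata--Viehweg vanishing and subsequent splitting rigorous on the possibly singular $F$ under only the codimension-$2$ local freeness hypothesis.
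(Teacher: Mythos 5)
Your first two paragraphs (semiampleness of $D$, the fibration $\pi\colon Y\to Z$, the degree computation giving $d=1$ and $\epsilon(F)=A_X(F)+1$, and $\pi|_F\colon F\to Z$ an isomorphism) are fine and parallel the paper's use of Fujita's argument. The genuine gap is in the final step: you claim that all fibers of $\pi$ are irreducible and reduced and conclude that $\pi$ is a $\bP^1$-\emph{bundle}, so that $Y\cong\bP_F(\cO_F\oplus\cO_F(F|_F))$ once $\pi_*\cO_Y(F)$ splits. Neither implication holds under the stated hypotheses. Your argument only excludes fiber components \emph{disjoint} from $F$; it says nothing about multiplicities, and in fact the hypotheses allow $\cO_Y(F)$ (equivalently $F|_F$) to fail to be locally free along a codimension $\ge 2$ subset of $F$. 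Over such points the scheme-theoretic fiber of $\pi$ is typically non-reduced and $Y$ is singular along it (take $Y$ to be the blow-up of the vertex of an orbifold cone over $(F,M)$ with $M$ Weil but not Cartier at isolated points of $F$: locally $Y$ is $\bA^{n+1}/\mu_m$ and the fiber has multiplicity $m$), so $\pi$ is not a $\bP^1$-bundle and $\bP_F(\cO_F\oplus\cO_F(F|_F))$ --- Proj of the symmetric algebra of a non-locally-free sheaf --- is not isomorphic to $Y$. The point of the lemma is exactly that $X$ is an \emph{orbifold} cone, i.e.\ the graded algebra $\bigoplus_m\pi_*\cO_Y(mF)$ is the reflexive algebra $\bigoplus_m\bigoplus_{k=0}^m\cO_Z(-kM_Z)$ rather than $\Sym^\bullet(\cO_Z\oplus\cO_Z(-M_Z))$; these agree only when $F|_F$ is Cartier.

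The paper's proof is structured to avoid precisely this: it first derives the restriction sequence $0\to\cO_Y((m-1)F)\to\cO_Y(mF)\to\cO_F(mF|_F)\to 0$ using Cohen--Macaulayness of $\cO_Y(mF)$ and local cohomology vanishing on the codimension $\ge 3$ bad locus (a step you assert rather than prove), twists so that the extension has quotient $\cO_Z$ before computing the extension class (note that $\Ext^1(\cO_F(F|_F),\cO_F)$ is \emph{not} $H^1(F,\cO_F(-F|_F))$ when $\cO_F(F|_F)$ is not locally free, because of local $\mathcal{E}xt$ contributions, so your splitting argument as written is also incomplete), and then establishes the $\bP^1$-bundle structure only over an open subset $Z^0$ whose complement has codimension $\ge 2$, recovering the full graded algebra by pushing forward ($j_*$) from that locus. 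To repair your argument you would need to replace the global bundle identification by this restrict-and-extend mechanism; as it stands, the passage from the splitting of $\pi_*\cO_Y(F)$ to $Y\cong\bP_F(\cO_F\oplus\cO_F(F|_F))$ is false in the generality of the lemma.
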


\begin{proof}
 Let  $D:=\sigma^*(-K_X)-\epsilon(F)\cdot F$. By definition we know that
 $D$ is a nef $\bR$-divisor that is not big. Since $\epsilon(F)>A_X(F)>0$, 
 $-K_Y-F=\sigma^*(-K_X)-A_X(F)\cdot F$ is ample. Hence the cone theorem implies
 that $\overline{NE}(Y)$ is polyhedral, so $\epsilon(F)\in\bQ$. By Shokurov's 
 basepoint-free theorem and Kawamata-Viehweg vanishing theorem, the $\bQ$-divisor
 $D$ is semiample and defines an algebraic fiber space $\pi: Y\to Z$. Since
 $-K_Y-F=D+(\epsilon(F)-A_X(F))F$ is ample, $F$ is a $\pi$-ample $\bQ$-Cartier divisor.
 Following the same argument as in \cite[Proposition 3.3]{Fuj16}, the fibers of
 $\pi$ are one-dimensional, $\pi$ is a generic
 $\bP^1$-fibration, and $\pi|_F:F\to Z$ is an isomorphism. In particular, $-K_Y-2F$ has zero intersection number with
 a generic fiber of $\pi$, hence $-K_Y-2F=D$ which yields $\epsilon(F)=A_X(F)+1$.
 
 Let $F^0$ be the biggest open subset of the smooth locus $F_{\reg}$ of $F$ such that
 $\cO_Y(F)$ is locally free along $F^0$. By assumption we have $\codim_F 
 F\setminus F^0\geq 2$.
 Denote by $Y^0:=Y\setminus(F\setminus F^0)$ with $i: Y^0\hookrightarrow Y$ and $j:F^0\hookrightarrow F$ the open immersions,
 hence $\codim_Y Y\setminus Y^0\geq 3$.
 Denote by $Z^0:=\pi(F^0)$.
 Since $\cO_Y(F)$ is locally free along $F^0$, we have the short exact sequence
 \begin{equation}\label{exseq1}
  0\to \cO_{Y^0}((m-1)F^0)\to\cO_{Y^0}(mF^0)\to \cO_{F^0}(mF^0|_{F^0})\to 0.
 \end{equation}
 By taking $i_*$, we get an exact sequence
 \begin{equation}\label{exseq2}
  0\to i_*\cO_{Y^0}((m-1)F^0)\to i_*\cO_{Y^0}(mF^0)\to j_*\cO_{F^0}(mF^0|_{F^0})
  \to R^1 i_*\cO_{Y^0}((m-1)F^0).
 \end{equation}
 Since $Y^0$ and $F^0$ are open subsets of $Y$ and $F$ respectively whose complements have codimension at least $2$,
 we have these natural isomorphisms
 \[
  i_*\cO_{Y^0}(mF^0)\cong \cO_Y(mF),\quad j_*\cO_{F^0}(mF^0|_{F^0})\cong \cO_F(mF|_F).
 \]
 From the assumption that $F$ is Cartier in codimension $2$, we have that
 $\cO_F(mF|_F)$ is a well-defined $\bQ$-Cartier Weil divisorial sheaf on $F$
 satisfying $\cO_F(mF|_F)\cong\cO_F(F|_F)^{[m]}$. 
 By \cite[Corollary 5.25]{km98} we have that $\cO_Y(mF)$ is Cohen-Macaulay for any 
 $m\in \bZ$. Since $\codim_Y Y\setminus Y^0\geq 3$, the local cohomology 
 long exact sequence implies that 
 \[
  R^1 i_* \cO_{Y^0}((m-1)F^0)\cong \cH^2_{Y\setminus Y^0}(Y,\cO_Y((m-1)F))=0.
 \]
 Hence the exact sequence \eqref{exseq2} becomes
 \begin{equation}\label{exseq3}
    0\to \cO_Y((m-1)F)\to\cO_Y(mF)\to \cO_F(mF|_F)\to 0.
 \end{equation}
 Let $M$ be a $\bQ$-Cartier Weil divisor on $F$ representing $\cO_F(-F)$,
 then $M|_{F^0}$ is Cartier on $F^0$. Denote by $M_Z:=(\pi|_F)_*M$. After 
 tensoring \eqref{exseq3} by $\cO_Y(m\cdot\pi^*M_Z)$ and taking the reflexive hull,
 we get an exact sequence 
 \begin{equation}\label{exseq4}
  0\to \cO_Y((m-1)F+m\cdot\pi^*M_Z))\to\cO_Y(m(F+\pi^*M_Z))\to \cO_F\to 0.
 \end{equation}
 The reason that \eqref{exseq4} is exact can be deduced as follows: we first tensor
 \eqref{exseq1} by $\cO_{Y^0}(m\cdot\pi^*M_Z)$ (since $\cO_{Y^0}(m\cdot\pi^*M_Z)$
 is locally free along $F^0$, the exactness remains), then taking $i_*$ (since 
 $\pi^*M_Z$ is a $\bQ$-Cartier Weil divisor on $Y$,
 we again use \cite[Corollary 5.25]{km98} to show that $\cO_Y((m-1)F+m\cdot\pi^*M_Z))$ is 
 Cohen-Macaulay); then the argument
 above works in a similar way.
 
 Now applying $\pi_*$ to the exact sequence \eqref{exseq4} when $m=1$, we get
 an exact sequence
 \begin{equation}\label{exseq5}
  0\to \pi_*\cO_Y(\pi^*M_Z)\to\pi_*\cO_Y(F+\pi^*M_Z)\to \cO_Z\to R^1\pi_*\cO_Y(\pi^*M_Z).
 \end{equation}
 It is clear that $K_Y\sim_{\bQ,\pi}-2F$ is $\pi$-anti-ample, hence Kawamata-Viehweg
 vanishing implies that $R^1\pi_*\cO_Y(\pi^*M_Z)=0$. The projection formula yields
 $(\pi_*\cO_{Y}(\pi^*M_Z))|_{Z^0}\cong \cO_{Z^0}(M_Z)$.
 As abuse of notation we also denote the open immersion $Z^0\hookrightarrow Z$ by $j$. Hence 
 \[
  \pi_*\cO_Y(\pi^*M_Z)\cong j_*\left((\pi_*\cO_{Y}(\pi^*M_Z))|_{Z^0}\right)
  \cong j_* \cO_{Z^0}(M_Z)\cong \cO_{Z}(M_Z),
 \]
 where the first equality follows from the fact that $\cO_Y(\pi^*M_Z)$ satisfies $S_2$-condition and $\pi$ is flat.
 As a result, \eqref{exseq5} becomes
 \begin{equation}\label{exseq6}
  0\to \cO_Z(M_Z)\to\pi_*\cO_Y(F+\pi^*M_Z)\to \cO_Z\to 0.
 \end{equation}
 Hence $\pi_*\cO_Y(F+\pi^*M_Z)$ is an extension of $\cO_Z$ by $\cO_Z(M_Z)$. 
 It is clear that
 \[
 \Ext^1(\cO_Z,\cO_Z(M_Z))\cong H^1(Z, \cO_Z(M_Z))\cong H^1(F,\cO_F(M)).
 \]
 By assumption we have that 
 \[
  K_F=(K_Y+F)|_F\sim_{\bQ}(\sigma^*K_X+A_X(F)\cdot F)|_F =-A_X(F)\cdot M.
 \]
 Hence $M-K_F\sim_{\bQ}(A_X(F)+1)M$ is ample.
 Then Kawamata-Viehweg vanishing yields $H^1(F,\cO_F(M))=0$. So the exact sequence
 \eqref{exseq6} splits, i.e. $\pi_*\cO_Y(F+\pi^*M_Z)\cong \cO_Z(M_Z)\oplus\cO_Z$.
 By restricting to $Z^0$, tensoring with $\cO_{Z^0}(-M_Z)$ and taking $i_*$,
 we have 
 \[
  \pi_*\cO_Y(F)\cong\cO_Z\oplus\cO_Z(-M_Z).
 \]
 
 Denote by $Y':=\pi^{-1}(Z^0)$.
 From \cite[Proposition 3.3]{Fuj16} and \cite[Lemma 5]{LZ18} we know that $\pi^0=\pi|_{Y'}:Y'\to Z^0$ is a smooth
 $\bP^1$-fibration. Since $\pi^0$ admits a section, it is a $\bP^1$-bundle.
 As a consequence, 
 \begin{equation}\label{eq1}
  \bigoplus_{m=0}^\infty(\pi^0)_*\cO_{Y'}(mF^0)\cong \bigoplus_{m=0}^\infty\Sym^m \big((\pi^0)_*\cO_{Y'}
 (F^0)\big).
 \end{equation}
 Applying $j_*$ to \eqref{eq1} yields
 \begin{align*}
  \bigoplus_{m=0}^\infty\pi_*\cO_{Y}(mF)&\cong \bigoplus_{m=0}^\infty j_*~\Sym^m \big((\pi^0)_*\cO_{Y'}
 (F^0)\big)\\
 &\cong \bigoplus_{m=0}^\infty j_*\left(\bigoplus_{k=0}^m\cO_{Z^0}(-kM_Z)\right)\\
 &\cong \bigoplus_{m=0}^\infty \bigoplus_{k=0}^m\cO_{Z}(-kM_Z).
 \end{align*}
 Since $F$ is $\pi$-ample, we have
 \[
  Y\cong \Proj_{Z}\bigoplus_{m=0}^\infty\pi_*\cO_{Y}(mF)\cong \Proj_Z\bigoplus_{m=0}^\infty\bigoplus_{k=0}^m\cO_{Z}(-kM_Z).
 \]
 Hence $(Y,F)$ is isomorphic to the canonical blow-up of the projective orbifold cone
 $C_p(F,\cO_F(M))$ at the vertex, which implies $X\cong C_p(F,\cO_F(M))$.
 
 Since $-K_F\sim_{\bQ} A_X(F)\cdot M$, we have that $-K_X\sim_{\bQ} (A_X(F)+1) F_{\infty}$
 where $F_\infty$ is the section of the projective orbifold cone at infinity.
 Hence $\alpha(X)\leq \frac{1}{A_X(F)+1}$.
\end{proof}

\section{Log canonical thresholds on general complete intersections} \label{sec:lct-hyp}

Let $r\ge 1$, $n\ge 3$ and $d_1, \cdots, d_r\ge 2$ be integers. In this section we study the (global) log canonical threshold of the hyperplane class $H$ on a general smooth complete intersection $X=F_1\cap \cdots \cap F_r \subseteq \bP^{n+r}$ of codimension $r$ and dimension $n$, where each $F_i$ is a hypersurface of degree $d_i$. In particular, we construct (Corollary \ref{cor:weakly exceptional}) weakly exceptional singularities with the properties described in Theorem \ref{thm:fujalpha} and also prove Theorem \ref{thm:lct-general-hyp}. First let us specify the generality condition we will consider. For this we recall the regularity condition introduced by Pukhlikov (see e.g. \cite{Puk-cpi} or \cite[\S 3.2]{Puk-book}). Let $x\in X$ and let $f_i$ be the defining equation of $F_i$. Choose a system of affine linear coordinates $z_*=(z_1,\cdots,z_{n+r})$ with origin at $x$ and write $f_i$ as
\[f_i=q_{i,1}+q_{i,2}+\cdots+q_{i,d_i}
\]
where $q_{i,j}=q_{i,j}(z_*)$ is homogeneous of degree $j$. We rearrange the $d=\sum _{i=1}^r d_i$ polynomials $q_{i,j}$ ($1\le i\le r$, $1\le j\le d_i$) into a sequence $q_1,\cdots,q_d$ such that $q_{i,j}$ precedes $q_{i',j'}$ if and only if $j<j'$ or $j=j'$ and $i<i'$. In particular, $\deg q_1\le \cdots \le \deg q_d$. Since $X$ is smooth, the linear subspace $\Sigma_x = (q_1=\cdots=q_r=0)$ has codimension $r$ and can be identified with the tangent space of $X$ at $x$.  By the following lemma, we may assume that $d\ge n+1$ throughout the remaining part of the section.

\begin{lem} \label{lem:large index}
Let $X\subseteq\bP^{n+r}$ be a smooth Fano complete intersection of codimension $r$, dimension $n$ and index $s$ $($i.e. $-K_X\sim sH)$. Assume that $s\ge r+1$, then $\lct(X;|H|_\bQ)=1$ where $H$ is the hyperplane class.
\end{lem}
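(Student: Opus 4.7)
Plan: The upper bound $\lct(X;|H|_\bQ)\le 1$ is immediate: since $X$ is smooth and $H$ is very ample, a general member $H'\in|H|$ is a smooth hyperplane section by Bertini, so $(X,H')$ is log smooth and $\lct(X,H')=1$.

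For the lower bound, fix an effective $\bQ$-divisor $D\sim_\bQ H$ on $X$; we wish to show $(X,D)$ is log canonical. The hypothesis $s\ge r+1$ rewrites as $\sum d_i\le n$, a strong positivity condition ensuring that $X$ is covered by lines: the Fano scheme of lines on $X$ has expected dimension $2n+r-2-\sum d_i\ge n-1$, and through every point $x\in X$ there passes a family of lines on $X$ of expected dimension $\ge s-2\ge r-1$. When $s\ge 3$, this family is positive-dimensional, so for a generic line $\ell\subset X$ through $x$ one has $\ell\not\subseteq\Supp(D)$ (as $\Supp(D)$ is a proper subvariety). The bound $\mult_xD\cdot\mult_x\ell\le D\cdot\ell=H\cdot\ell=1$ then gives $\mult_xD\le 1$ at every $x$, and log canonicity of $(X,D)$ follows from the standard inequality $\lct_x(X,D)\ge 1/\mult_xD$ valid on smooth varieties.

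The main obstacle is the extremal case $s=r+1=2$, forcing $r=1$ and $X\subset\bP^{n+1}$ to be a smooth hypersurface of degree $n$ (for instance a smooth cubic threefold), in which the family of lines through a general point is zero-dimensional and possibly entirely contained in $\Supp(D)$---this happens, for example, when $D$ is a tangent hyperplane section $T_xX\cap X$, where $\mult_xD=2$. Here the generic-line argument breaks down. To handle this case I would either pass to positive-dimensional families of higher-degree rational curves (conics, whose existence and covering property on such $X$ are classical via Pukhlikov's hypertangent divisor technique) combined with a refined analysis of the ambient pair $(\bP^{n+1},X+\tfrac{1}{m}G)$ to transfer log canonicity through adjunction, or, alternatively, classify directly those divisors $D\sim_\bQ H$ for which $\mult_xD>1$: such $D$ are forced into very special forms (cone-type or tangent-section-type) whose singularities are Du Val or log canonical by inspection, so that $(X,D)$ is automatically log canonical. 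Combining the two cases with the upper bound yields $\lct(X;|H|_\bQ)=1$.
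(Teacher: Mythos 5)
Your upper bound is fine, and the existence of lines is fine (lines on $X$ through $x$ are cut out by $\sum_i d_i$ equations in the $\bP^{n+r-1}$ of lines through $x$, so the family is nonempty of dimension at least $s-2$). The genuine gap is the next step: you assert that a generic line $\ell\subset X$ through $x$ satisfies $\ell\not\subseteq\Supp(D)$ ``as $\Supp(D)$ is a proper subvariety''. This does not follow, because every line of $X$ through $x$ lies in the embedded tangent space $T_xX$, and the cone they sweep out (dimension about $s-1$) is tiny compared with $\dim\Supp(D)=n-1$, so it can perfectly well be contained in $\Supp(D)$. Concretely, let $\Lambda\subseteq\bP^{n+r}$ be a hyperplane containing $T_xX$ and $D=\Lambda\cap X\sim H$: then $D$ contains every line of $X$ through $x$ and $\mult_xD\ge 2$ (already for a smooth cubic fourfold, $s=3$, with $D$ the tangent hyperplane section). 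So your claimed bound $\mult_xD\le 1$ at \emph{every} point is false for all $s$, not just in the extremal case $s=2$; it does not contradict the lemma (such a $D$ is still lc), but it removes the entire mechanism of your proof. Your fallback suggestions for $s=2$ (conics, or ``classifying'' the $\bQ$-divisors with $\mult_xD>1$ as tangent- or cone-type with lc singularities ``by inspection'') are unsubstantiated: $D$ may be $\frac{1}{m}G|_X$ for $G$ of large degree $m$ and can be far more singular at a point than any hyperplane section, so no such classification is available.

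What is actually true, and what the paper uses (quoting Suzuki's Proposition 2.1, which rests on Pukhlikov's technique), is that $\mult_xD\le 1$ holds for all $x$ outside a closed subset $Z\subseteq X$ of dimension at most $r-1$; the points where the cone of lines falls into $\Supp(D)$ are absorbed into $Z$. The real content of the lemma is then the treatment of $Z$: the paper takes a codimension-$r$ linear section $W$ of $X$ disjoint from $Z$, forms $\Delta=(1-\epsilon)D+\frac{r}{r+1}(H_1+\cdots+H_{r+1})$ with general hyperplane sections $H_i\supseteq W$, and applies the Koll\'ar--Shokurov connectedness lemma; the index hypothesis $s\ge r+1$ is used precisely there, to keep $-(K_X+\Delta)\sim_\bQ(s-r-1+\epsilon)H$ ample, forcing $\Nklt(X,\Delta)$ to be connected and hence disjoint from $Z$, so that $(X,D)$ is lc along $Z$ as well. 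Your proposal contains no mechanism for controlling $(X,D)$ along this bad locus, so as it stands it does not prove the lemma.
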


\begin{proof}
The proof is similar to the hypersurface case in \cite{C-hypersurface-lct}. Let $D\sim_\bQ H$ be an effective divisor on $X$. By \cite[Proposition 2.1]{Suzuki-cpi}, there exists a closed subset $Z\subseteq X$ of dimension at most $r-1$ such that $\mult_x D\le 1$ for all $x \in X\backslash Z$. In particular, $(X,D)$ is lc outside $Z$. Let $W\subseteq X$ be a linear space section of $X$ of codimension $r$ that is disjoint from $Z$. Let $H_1,\cdots,H_{r+1}\subseteq X$ be general hyperplane sections containing $W$ and let $\Delta=(1-\epsilon)D+\frac{r}{r+1}(H_1+\cdots+H_{r+1})$ where $0<\epsilon\ll 1$. Then $(X,\Delta)$ is klt outside $Z\cup W$ by construction. As $-(K_X+\Delta)\sim_\bQ (s-r-1+\epsilon)H$ is ample, $\Nklt(X,\Delta)$ is connected by Koll\'ar-Shokurov's connectedness lemma. Since $\mult_W \Delta\ge r$, $W$ is already a non-klt center of $(X,\Delta)$; hence since $Z$ is disjoint from $W$, we deduce that $(X,\Delta)$ is klt along $Z$ and it follows that $(X,(1-\epsilon)D)$ is also klt along $Z$. Hence $(X,D)$ is lc and $\lct(X;|H|_\bQ)\ge 1$. The other direction of the inequality is obvious.
\end{proof}

\begin{defn}
Let $k=\min\{d,n+r-2\}$. The complete intersection $X$ is said to be \emph{Pukhlikov regular} (or simply \emph{P-regular}) at $x$, if for any linear form $h=h(z_*)$ not in the span of $q_1,\cdots,q_r$, the sequence $h,q_1,\cdots,q_k$ is a regular sequence in $\cO_{\bP^{n+r},x}$. We say that $X$ is \emph{P-regular} if it is P-regular at every $x\in X$.
\end{defn}

Note that when $X$ is a Fano complete intersection of index one, this is exactly the P-regularity condition introduced by Pukhlikov \cite{Puk-cpi}. We now state the generality condition we need.

\begin{defn} \label{defn:m-strong reg}
Let $0\le m < n$ be an integer. We say that $X$ is $m$-strongly P-regular if it is P-regular at $x$ and for any linear form $h=h(z_*)$ not in the span of $q_1,\cdots,q_r$, the algebraic set $Z=(h=q_1=\cdots=q_m=0)\cap X$ is irreducible and reduced. We say that $X$ is $m$-\emph{strongly P-regular} if it is $m$-strongly P-regular at every $x\in X$.
\end{defn}

This can be viewed as a generalization of the conditions introduced in \cite[\S 2.1]{Puk-product}. Clearly if $X$ is $m$-strongly P-regular then it is also $m'$-strongly P-regular for all $m'<m$. Note also that if $n\ge 2r+3$ and $X$ is P-regular, then $X$ is at least $r$-strongly P-regular as by Lefschetz hyperplane theorem, the cycle $Z=(h=q_1=\cdots=q_r=0)\cap X$ is automatically irreducible and reduced. 

Assume that the complete intersection $X$ is P-regular at $x$. Following Pukhlikov, we further introduce the following setup. For each $1\le i\le r$ and $1\le j\le d_i$, let
\[f_{i,j}=q_{i,1}+\cdots+q_{i,j}\]
be the truncated equation of the hypersurface $F_i$ at $x$. Let $\bar{f}_1,\cdots,\bar{f}_d$ be the rearrangement of all the $f_{i,j}$ corresponding to $q_1,\cdots,q_d$. Let $e_i=\deg q_i=\deg \bar{f}_i$. Let $1\le \ell \le k:= \min\{d,n+r-2\}$ be an integer and let $u,v$ be such that $q_\ell=q_{u,v}$. We define the $\ell$-th slope $\beta_\ell$ to be $\frac{v+1}{v}$ (note that $v=e_\ell$) if $v<d_u$ and $q_{u,v+1}$ belongs to the sequence $q_1,\cdots,q_k$; otherwise (including when $\ell>k$) we set $\beta_\ell =1$. We also call the linear system of divisors
\[\Lambda_\ell = \{ \sum_{1\le i\le \ell, \beta_i>1} \bar{f}_i s_i\,|\,s_i=s_i(z_*)\;\mathrm{is}\;\mathrm{homogeneous},\,\deg s_i = e_\ell - e_i \} \subseteq |\cO_X(e_\ell)|\]
the $\ell$-th hypertangent linear system of $X$ at $x$. If $L$ is a hyperplane section of $X$ at $x$ that does not contain the tangent space $\Sigma_x$ of $X$, we also refer to $\Lambda_{L,\ell}:=\Lambda_\ell |_L$ as the $\ell$-th hypertangent linear system of $L$ at $x$. By the P-regularity condition, we have $\mult_x \Lambda_{L,\ell} = e_\ell +1$ when $\beta_\ell >1$ and in this case it is not hard to see that as $\Bs(\Lambda_{L,\ell})$ is the intersection of $L$ with all $(q_i=0)$ for which $i\le\ell$ and $\beta_i>1$, the tangent cone of $\Bs(\Lambda_{L,\ell})$ is a complete intersection of codimension $\lambda_\ell:=\#\{i\le \ell \,|\,\beta_i>1\}$ in $\bP(L\cap\Sigma_x)$. Therefore $\codim_L \Bs(\Lambda_{L,\ell})=\lambda_\ell$ in a neighbourhood of $x$ and 
\[\mult_x \Bs(\Lambda_{L,\ell})=\prod_{1\le i\le\ell, \beta_i>1} (e_i+1)=\prod_{1\le i\le\ell, \beta_i>1} e_i \beta_i\]
(for more details see \cite[\S 3]{Puk-book}). If in addition $X$ is $\ell$-strongly P-regular at $x$, then we also have $\Bs(\Lambda_{L,\ell})=L\cap (\bar{f}_1=\cdots=\bar{f}_\ell=0)$ and
\[\deg \Bs(\Lambda_{L,\ell}) = \deg X \cdot \prod_{1\le i\le\ell, \beta_i>1} e_i\]
and hence
\begin{equation} \label{eq:m/d of Z_l}
    \md \Bs(\Lambda_{L,\ell}) = \frac{1}{\deg X} \prod_{1\le i\le\ell, \beta_i>1} \beta_i = \frac{1}{\deg X} \prod_{1\le i\le\ell} \beta_i.
\end{equation}
It is also clear that $\lambda_k=k-r$. 

We can then state the lower bound of log canonical threshold on $m$-strongly P-regular complete intersections.

\begin{lem} \label{lem:lct bound}
Let $m\ge 1$ be an integer such that $\beta_m>1$ and let $X\subseteq\bP^{n+r}$ be an $(m-1)$-strongly P-regular complete intersection, then with the above notations, we have
\[\lct(X;|H|_\bQ)\ge \min\left\{1,\frac{2}{\deg X}\prod_{i\ge 1, i\neq m} \beta_i\right\}.\]
\end{lem}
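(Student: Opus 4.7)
The plan is to argue by contradiction, using a variant of Pukhlikov's hypertangent divisor technique \cite[Section 3.2]{Puk-book} that skips the $m$-th hypertangent slope. Suppose the inequality fails, so there exist an effective $\bQ$-divisor $D\sim_{\bQ}H$ and a point $x\in X$ with $(X,cD)$ not log canonical at $x$ for some $c<\min\{1,\frac{2}{\deg X}\prod_{i\ne m}\beta_i\}$. We may assume $c<\frac{2}{\deg X}\prod_{i\ne m}\beta_i<1$, since the case where $1$ realizes the minimum is already handled.

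First I would choose a general hyperplane section $L\subseteq X$ through $x$, so that $L$ is smooth at $x$ and not contained in $\Supp D$; by inversion of adjunction $(L,cD|_L)$ remains not log canonical at $x$. Next I would iteratively pick general members $T_i\in\Lambda_{L,i}$ of the hypertangent linear systems for indices $i\in\{1,\ldots,k\}\setminus\{m\}$ with $\beta_i>1$. By the $(m-1)$-strong P-regularity of $X$ at $x$, the base loci $\Bs(\Lambda_{L,i})$ for $i\le m-1$ stay irreducible and reduced, while generic choice of $T_i$ together with ordinary P-regularity gives the correct codimension count, multiplicity, and degree at each step via \eqref{eq:m/d of Z_l}. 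Iterating until $\beta_m$ is the only slope skipped, I would obtain a two-dimensional irreducible subvariety $S\subseteq X$ through $x$, smooth at $x$, with
\[\md S\;\le\;\frac{1}{\deg X}\prod_{i\ne m}\beta_i.\]
Repeated inversion of adjunction propagates the non-log-canonical condition to $(S,cD|_S)$. A blow-up analysis on the smooth surface $S$ then yields the crucial lower bound $\mult_x(D|_S)>2/c$; combined with the equality $D\cdot S=\deg S$ (which follows from $D\sim_{\bQ}H$) and the local intersection inequality $\mult_x(D|_S)\cdot\mult_x S\le D\cdot S$, this gives
\[\frac{2}{c}<\mult_x(D|_S)\le\frac{\deg S}{\mult_x S}=\frac{\deg X}{\prod_{i\ne m}\beta_i},\]
contradicting the assumption $c<\frac{2}{\deg X}\prod_{i\ne m}\beta_i$.

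The main obstacle is extracting the factor of $2$ at the final surface step. A single blow-up of $x$ on $S$ gives log discrepancy $2-c\mult_x(D|_S)$ to the exceptional divisor, so the bound $\mult_x(D|_S)>2/c$ holds automatically when the non-log-canonical center is that exceptional divisor. When instead the non-log-canonical center lies over an infinitely near point, one must propagate the inequality through successive blow-ups via a variant of Corti's $4$-inequality on surfaces, and this is the technical heart of the argument. A secondary subtlety is ensuring that each generic hypertangent cut preserves the non-log-canonical property while keeping the intermediate loci irreducible of the expected dimension; this is handled by the $(m-1)$-strong P-regularity assumption combined with standard Bertini-type arguments and the genericity of each chosen $T_i$.
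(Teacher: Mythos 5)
Your proposal diverges from the paper's argument in two ways that create genuine gaps. The most serious one is the factor of $2$. Non-log-canonicity of $(S,cD|_S)$ at a \emph{smooth} surface point only forces $\mult_x(cD|_S)>1$, not $>2$: for a cuspidal curve $C=(y^2=x^3)$ and $c$ slightly above $\lct=\frac{5}{6}$ one has $c\,\mult_x C=\frac{5}{3}<2$, so your ``crucial lower bound'' $\mult_x(D|_S)>2/c$ is simply false, and no amount of blow-up analysis at $x$ can produce it. The $2$ in the hypertangent method comes from the refined statement (\cite[\S 7, Proposition 2.3]{Puk-book}, applied in the paper to a general surface section of $(X,cD)$) that there is a hyperplane $\Pi\subseteq E$ in the exceptional divisor of the blow-up of $x$ with $\mult_x(cD)+\mult_\Pi(c\tD)>2$, together with the \emph{special} choice of a hyperplane section $L$ whose strict transform contains $\Pi$, which converts the infinitely-near multiplicity into the honest bound $\md(D\cdot L)>\frac{2}{c\deg X}$. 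You instead take $L$ general through $x$, which discards the direction $\Pi$ irretrievably; a Corti-type inequality invoked at the end again only controls sums of multiplicities at infinitely near points, and in any case applies at smooth points, whereas $x$ is a highly singular point of your $S$. With your ordering you can only hope for the weaker bound with $1$ in place of $2$.

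The second problem is the construction of $S$ itself. Every hypertangent divisor has multiplicity $e_i+1\ge 2$ at $x$ by construction, so the intersection of $L$ with general members of the hypertangent systems is never smooth at $x$; rather $\mult_x S=\prod(e_i+1)$ and $\md S=\frac{1}{\deg X}\prod\beta_i$ (this large multiplicity is the whole point of the technique, and note it is an equality, pointing the opposite way from the inequality you state and from what your final chain requires). The paper's proof does not restrict $D$ to a fixed complete intersection of hypertangent divisors: it intersects irreducible cycles $W_\ell$ supported in $\Supp D\cap L$ with hypertangent divisors chosen general \emph{relative to} $W_\ell$, gaining a factor $\beta_\ell$ in $\md$ at each step and concluding from $\md W_k\le 1$. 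The $(m-1)$-strong P-regularity enters exactly to compute $\md Z_\ell$ for $\ell\le m-1$ and conclude $W_\ell\neq Z_\ell$, hence $W_\ell\not\subseteq H_\ell$; for $\ell\ge m+1$ a codimension count suffices, and the index $m$ is skipped because neither argument is available there. In your version it is not explained why the index $m$ must be skipped, why $S\not\subseteq\Supp D$, or where strong P-regularity is actually used, and the non-lc propagation to the non-normal, very singular $S$ is not justified. I would recommend reorganizing the proof along the paper's order of operations: directional $2$-inequality first, special $L$ containing $\Pi$, then the cycle-by-cycle hypertangent descent.
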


\begin{proof}
The proof is similar to that of \cite[Theorem 2]{Puk-product}, based on the technique of hypertangent divisors. Denote by $c$ the right hand side of the above inequality. Let $D\sim_\bQ H$ be an effective divisor on $X$. It suffices to show that $(X,cD)$ is lc. Since $c\le 1$, the non-lc center of the pair $(X,cD)$ has codimension at least $2$. Let $x\in X$ be a point in the non-lc center of $(X,cD)$. Let $\pi:\tX\rightarrow X$ be the blowup of $x$, let $E$ be the exceptional divisor and let $\tD$ be the strict transform of $D$. Then by (the proof of) \cite[\S 7, Proposition 2.3]{Puk-book} (applied to a general surface section of $(X,cD)$), there exists a hyperplane $\Pi\subseteq E$ such that
\begin{equation} \label{eq:mult>2}
    \mult_x (cD) + \mult_\Pi (c\tD) >2.
\end{equation}
Let $\cL$ be the linear system of hyperplane sections of $X$ whose strict transform contains $\Pi$ and let $L\in \cL$ be a general member. Note that $\cL$ is spanned by the tangent hyperplanes of $X$ at $x$ and another hyperplane. Thus by the P-regularity condition, $\Bs(\cL)$ has codimension at least $r+1\ge 2$ in $X$ and the divisor $L\cdot D \subseteq L$ is well defined. By \eqref{eq:mult>2} we have $\mult_x(cD\cdot L)>2$, or
\begin{equation} \label{eq:mult/deg}
    \md(D\cdot L)>\frac{2}{c\cdot \deg X}.
\end{equation}
Let $\Lambda_{L,\ell}$ be the $\ell$-th hypertangent linear system of $L$ at $x$. Since $X$ is $(m-1)$-strongly P-regular, the cycle $Z_\ell=\Bs(\Lambda_{L,\ell})$ is irreducible and reduced when $1\le \ell \le m-1$.

By \eqref{eq:mult/deg}, there exists an irreducible component $W_1$ of $D\cdot L$ such that 
\[\md W_1 \ge \md(D\cdot L)>\frac{2}{c\cdot \deg X}\ge \frac{2}{\deg X}\]
and hence by \eqref{eq:m/d of Z_l} with $\ell=1$ we have $W_1\neq Z_1$ (recall that $\beta_1=2$ and indeed we have $\beta_\ell\le 2$ for all $\ell$). It follows that $W_1$ is not contained in the divisor $H_1 = \Lambda_{L,1}$ (assuming $m-1\ge 1$) and we have a well-defined codimension $2$ cycle $W_1\cdot H_1$ in $L$ with
\[\md(W_1\cdot H_1)\ge \frac{2}{1}\cdot \md W_1=\beta_1 \cdot \md W_1\]
as $\mult_x H_1 = 2$ and $H_1\sim H$. Let $W_2$ be an irreducible component of $W_1\cdot H_1$ such that
\[\md W_2\ge\md(W_1\cdot H_1)>\frac{2\beta_1}{c\cdot \deg X} \ge \frac{2\beta_1}{\deg X}.\]
Assume that $\beta_2>1$, then by \eqref{eq:m/d of Z_l} again we have $W_2\neq Z_2$ (assuming $m-1\ge 2$). Since $W_2$ is already contained in $H_1$ and $L$ we deduce that $W_2$ is not contained in a general divisor $H_2\in \Lambda_{L,2}$, and in this case we get a codimension $3$ cycle $W_3$ in $L$ as an irreducible component of $W_2\cdot H_2$ with
\begin{equation} \label{eq:W_2}
    \md W_3 \ge \md(W_2\cdot H_2)\ge \beta_2 \cdot \md W_2 > \frac{2\beta_1 \beta_2}{c\cdot \deg X}.
\end{equation}
If $\beta_2=1$, we simply set $W_3=W_2$ and the inequality \eqref{eq:W_2} clearly still holds.
Iterating this process, we find an irreducible cycle $W_m$ of codimension $\lambda_m=\lambda_{m-1}+1$ (where $\lambda_\ell:=\#\{i\le \ell \,|\,\beta_i>1\}$) in $L$ with
\[\md W_m > \frac{2\beta_1 \beta_2 \cdots \beta_{m-1}}{c\cdot \deg X}.\]
By the P-regularity condition, $Z_\ell$ has codimension $\lambda_\ell$ in a neighbourhood of $x$, hence if $\beta_{m+1}>1$ (so that $\lambda_{m+1}=\lambda_m +1$) then $W_m$ is not contained in $\Bs(\Lambda_{L,m+1})$ and we may repeat the construction above to get an irreducible cycle $W_{m+1}$ (of codimension $\lambda_{m+1}$ in $L$) in the support of $W_m\cdot H_{m+1}$ (where $H_{m+1}$ is a general divisor in $\Lambda_{L,m+1}$) such that
\[\md W_{m+1} > \frac{2\beta_1 \beta_2 \cdots \beta_{m-1} \beta_{m+1}}{c\cdot \deg X}.\]
If $\beta_{m+1}=1$ then we simply set $W_{m+1}=W_m$ and the above inequality still holds for obvious reason. Iterate this process again and eventually we obtain an irreducible cycle $W_k$ (where $k = \min\{d,n+r-2\}$) such that
\[\md W_k > \frac{2\beta_1 \beta_2 \cdots \beta_{m-1} \beta_{m+1}\cdots \beta_k}{c\cdot \deg X}.\]
But we always have $\md W \le 1$ for any subvarieties $Z\subseteq X$, hence
\[c>\frac{2}{\deg X}\prod_{i\ge 1, i\neq m} \beta_i\]
(recall that $\beta_i=1$ when $i>k$), a contradiction. Therefore, $(X,cD)$ is lc at every $x\in X$.
\end{proof}

Our remaining task is to show that a general complete intersection of given degree is $m$-strongly P-regular for suitable $m$ so that we can apply Lemma \ref{lem:lct bound}. First we deal with the P-regularity condition.

\begin{lem} \label{lem:general cpi P-regular}
Let $r\ge 1$, $n\ge 2r+3$ and $d_1,\cdots,d_r\ge 2$ be integers. Then a general complete intersection $X\subseteq \bP^{n+r}$ of hypersurfaces of degrees $d_1,\cdots,d_r$ is P-regular.
\end{lem}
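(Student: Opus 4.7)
I would use a standard incidence-variety dimension count. With $\cH = \prod_{i=1}^r H^0(\bP^{n+r}, \cO(d_i))$, define
\[
\cI := \{((f_i), x)\in\cH\times\bP^{n+r}: x\in V(f_i),\ V(f_i)\text{ smooth at }x,\ \text{P-regularity fails at }x\}.
\]
Smoothness of a general complete intersection is classical via Bertini, so it suffices to show $\dim\cI<\dim\cH$; then the first projection $\pi_1:\cI\to\cH$ misses a Zariski open, proving the claim. By $\mathrm{PGL}_{n+r+1}$-equivariance of the second projection $\pi_2:\cI\to\bP^{n+r}$, all fibers have the same dimension, so the estimate reduces to: for a fixed point $x$, the codimension of ``P-regularity fails at $x$'' inside the $(\dim\cH-r)$-dimensional subspace of smooth tuples through $x$ is at least $n+1$.

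After fixing affine coordinates $z_1,\dots,z_{n+r}$ centered at $x$ and normalizing $q_{i,1}=z_i$ via the stabilizer of $x$, failure of P-regularity means: there exists a linear form $h\notin\mathrm{span}(z_1,\dots,z_r)$ such that the restricted forms $\bar q_{r+1},\dots,\bar q_k$ on $\Sigma_x\cap V(h)\cong\bA^{n-1}$ do not form a regular sequence. Equivalently, viewing things on $\bP^{n-1}$, either (A) $V(\bar q_{r+1},\dots,\bar q_k)\subseteq\bP^{n-1}$ has codimension strictly less than $k-r$, or (B) it has an irreducible component contained in some hyperplane. Since the restriction map from the Taylor data $(q_{i,j})_{j\ge 2}$ to tuples of forms of degrees $e_{r+1},\dots,e_k$ on $\bP^{n-1}$ is surjective, the codimension bound reduces to estimating the codimension of $(\mathrm{A})\cup(\mathrm{B})$ in this form parameter space.

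For (A), the standard stratification by an irreducible excess component $Z\subseteq\bP^{n-1}$ yields a codimension lower bound of $\sum_j h^0(Z,\cO_Z(e_j))-\dim\mathrm{Hilb}(Z)$, whose worst case (linear $Z$ of maximal dimension) easily exceeds $n+1$ once $n\ge 2r+3$. For (B), the dominant case is a line component, giving the codimension bound
\[
\sum_{j=r+1}^k(e_j+1)-2(n-2),
\]
which exceeds $n$ when $n\ge 2r+3$ and the degree profile is non-degenerate (e.g.\ some $e_j\ge 3$). In the residual all-quadric case $d_i=2$ for all $i$ the line estimate degenerates, and one instead uses that failure forces $h^2\in\mathrm{span}(\bar q_{r+1},\dots,\bar q_k)$ for some linear $h$, a condition of codimension at least $\binom{n+1}{2}-r-(n-1)$ by a rank computation in the span; this again exceeds $n$ precisely when $n\ge 2r+3$. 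Combining the three sub-bounds yields $\dim\cI\le \dim\cH-1<\dim\cH$, as required.

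The main obstacle is the uniform case analysis across all admissible profiles $(d_1,\dots,d_r)$: the ``worst'' sub-bound switches between the contains-a-line estimate (dominant when the $e_j$ are varied) and the $h^2\in\mathrm{span}$ estimate (dominant when many $e_j=2$), and the hypothesis $n\ge 2r+3$ is precisely the threshold at which all of these sub-bounds simultaneously exceed $n$. The bookkeeping, while slightly tedious, is purely numerical and follows the spirit of Pukhlikov's regularity arguments in \cite[\S 3]{Puk-book} and \cite{Puk-cpi}.
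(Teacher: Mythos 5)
Your overall framework is the same as the paper's: an incidence/parameter count showing that failure of P-regularity at a fixed point $x$ imposes at least $n+1$ conditions on the Taylor coefficients (whose restriction to forms on the projectivized tangent space is indeed surjective), so that the bad locus cannot dominate the parameter space. However, the execution has genuine gaps. The most serious one is that you never treat positive-dimensional components of a hyperplane section of $W=(p_1=\cdots=0)$ that are \emph{not} lines: in case (B) you simply assert that ``the dominant case is a line component,'' but a $1$-dimensional component lying in a hyperplane can be any degenerate curve (a conic, a plane curve of high degree, etc.), and bounding the conditions it imposes against the family it moves in is exactly the hard part. The paper devotes the bulk of its argument to this: for a \emph{fixed} hyperplane $\Sigma$ it runs a sequential regular-sequence argument (so the component $B$ is determined by the earlier forms and needs no Hilbert-scheme parameter count), estimates conditions via products of linear forms on the span $\left\langle B\right\rangle$ together with the Grassmannian of possible spans, obtains $\ge 2n$ conditions, and only then subtracts the $(n-1)$-dimensional family of hyperplanes. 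Your case (A) does not supply this either: the bound $\sum_j h^0(Z,\cO_Z(e_j))-\dim\mathrm{Hilb}(Z)$ is not a valid lower bound (the restriction maps need not be surjective, so $h^0(Z,\cO_Z(e_j))$ overestimates the number of imposed conditions; the correct substitute is the linear-projection bound $\binom{\dim Z+e_j}{e_j}$ of the paper's Lemma \ref{lem:hyp contain X}), and the claim that the worst case is a linear $Z$ of maximal dimension is unproven — avoiding any stratification by $Z$ is precisely why the sequential argument is used.

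The second gap is the low-degree regime. Your line estimate $\sum_{j}(e_j+1)-2(n-2)$ is negative whenever the total degree $d=\sum d_i$ is small compared to $n$ (e.g.\ a single cubic hypersurface with $n\gg0$), not only in the all-quadric case, so the claim that it ``exceeds $n$ when $n\ge 2r+3$ and some $e_j\ge3$'' is false as stated. The paper avoids this by first padding the sequence $q_1,\cdots,q_d$ with auxiliary terms so that one may assume $d\ge n+r-2$ (an initial segment of a regular sequence is regular, so proving the padded statement suffices), after which the count $\sum_i\frac{a_i(a_i+1)}{2}-(n+r-2)\ge n+1$ goes through for $n\ge2r+3$. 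Finally, your fallback for the all-quadric case is not correct: the failure of regularity means some component of the intersection of the restricted quadrics has excess dimension or lies in a hyperplane $V(h)$, and neither condition forces $h^2\in\mathrm{span}(\bar q_{r+1},\cdots,\bar q_k)$ (that would force the whole intersection into $V(h)$, a much more special situation). So both the line-degeneration fix and the non-line component case need to be reworked along the lines of the paper's argument.
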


\begin{proof}
This should be well known to experts and is essentially a direct consequence of the work of Pukhlikov \cite[\S 4]{Puk-cpi}. For reader's convenience we sketch the proof. Let $\bP=\bP^{n+r}$. Let $U\rightarrow B\subseteq \prod_{i=1}^r \bP H^0(\bP,\cO_{\bP}(d_i))$ be the universal family of smooth complete intersections of given type and $U\rightarrow\bP$ be the natural projection. It suffices to show that for a smooth complete intersection containing $x$ to violate the P-regularity at $x$ imposes at least $n+1$ conditions on the coefficients of $q_1,\cdots,q_d$ for any $x\in \bP$, since then the locus of irregular points has codimension at least $n+1$ in $U$ and cannot dominate $B$. For this we may fix the coefficients of $q_1,\cdots,q_r$ and further assume that $d\ge n+r-2$ by introducing auxiliary terms in the sequence $q_1,\cdots,q_d$.

Let $p_1,\cdots,p_{n-2}$ be the sequence obtained by restricting $q_{r+1},\cdots,q_{n+r-2}$ to $T_x X=(q_1=\cdots=q_r=0)\cap T_x \bP$ and let $m_i=\deg p_i\ge 2$. Let $\Pi=\bP(T_x X)$. Then the P-regularity of $X$ at $x$ is equivalent to saying that every hyperplane section of $W=(p_1=\cdots=p_{n-2}=0)\subseteq\Pi$ is zero dimensional. There are now two cases to consider. If all the $p_i$ vanish on a line, it would require at least
\begin{equation} \label{eq:contain a line}
    \sum_{i=1}^{n-2} (m_i+1) - 2(n-2) = \sum_{i=1}^r \frac{a_i(a_i+1)}{2} - (n+r-2)
\end{equation}
conditions on the coefficients of $p_i$ (where $a_i$ is the largest subscript $j$ such that $q_{i,j}$ appears in $q_1,\cdots,q_{n+r-2}$). As $\sum_{i=1}^r a_i=n+r-2$, an elementary argument shows that if $n\ge 2r+3$ then this number is $\ge n+1$. On the other hand, for a fixed hyperplane $\Sigma \subseteq \Pi$ we claim that for $W\cap \Sigma$ to contain a component of positive dimension that is not a line would require at least $2n$ conditions on the coefficients of $p_i$. As the hyperplane $\Sigma$ varies in an $(n-1)$-dimensional family, this will prove the statement we want.

Let $p'_i=p_i|_\Sigma$ and let $1\le \ell \le n-2$. To verify the claim, it suffices to consider the case where $p'_1,\cdots,p'_{\ell-1}$ form a regular sequence and $p'_\ell|_B=0$ for some component $B$ of $(p'_1=\cdots=p'_{\ell-1}=0)\subseteq \Sigma$ whose linear span $\left\langle B \right\rangle$ is different from a line. If $m_\ell=2$ then $\ell\le r$ and as $\dim B=n-\ell-1$ we see that (see e.g. Lemma \ref{lem:hyp contain X}) this already imposes at least
\begin{equation} \label{eq:m_l=2}
    \binom{n-\ell+1}{2}\ge \binom{n-r+1}{2}\ge 2n
\end{equation}
conditions on $p'_\ell$ (given that $n\ge 2r+3$). Similarly if $m_\ell =3$ and $\ell\le r+1$, then the vanishing of $p'_\ell$ on $B$ imposes at least
\begin{equation} \label{eq:m_l=3}
\binom{n-\ell+2}{3}\ge \binom{n-r+1}{3}\ge 2n
\end{equation}
conditions on $p'_\ell$. Thus we may assume that either $m_\ell\ge 4$ or $\ell\ge r+2$; in particular, this implies $\sum_{i=1}^\ell m_i\ge 2\ell+2$. Since any nonzero product $\prod_{j=1}^{m_i} \ell_j$ (where each $\ell_j$ is a linear form on $\left\langle B \right\rangle$) does not vanish on $B$ and the set of such products has dimension $m_i(n-2-b)+1$ (where $b=\codim_\Sigma \left\langle B \right\rangle$) in the space of degree $m_i$ homogeneous forms on $\left\langle B \right\rangle$, we deduce that our assumption on $p'_1,\cdots,p'_\ell$ imposes at least
\begin{equation} \label{eq:m_l>2}
    \left(\sum_{i=1}^\ell m_i\right)(n-2-b)+\ell-\dim \mathrm{Gr}(b,n-1) \ge (2\ell+2)(n-2-b)+\ell-b(n-1-b)
\end{equation}
conditions on their coefficients. Since $\codim B\le \ell-1$ and $\left\langle B \right\rangle$ is not a line, we have $b\le \ell-1$ and $b\le n-4$ when $\ell=n-2$. Therefore, it is straightforward to check that the right hand side of \eqref{eq:m_l>2}, as a quadratic function in $b$, in minimized at $b=\ell-1$ when $\ell\le n-3$ or at $b=n-4$ when $\ell=n-2$. It then follows from another elementary argument that the right hand side of \eqref{eq:m_l>2} is $\ge 2n$.
\end{proof}

For later applications we also need the following slightly more general version.

\begin{lem} \label{lem:cone cpi P-regular}
Let $n,r\ge 1$, $0\le s<r$ and $d_1,\cdots,d_r\ge 2$ be integers such that
\[n\ge 2r+3+\max\left\{2\sum_{i=1}^s d_i , \frac{1}{2}\sum_{i=1}^s d_i(d_i+1)\right\}.\]
Let $y\in\bP^{n+r}$, let $Q_1,\cdots,Q_s$ be general hypersurfaces of degrees $d_1,\cdots,d_s$ with a cone singularity at $y$ $($i.e. $\mult_y Q_i=d_i)$ and let $Q_{s+1},\cdots,Q_r$ be general hypersurfaces of degrees $d_{s+1},\cdots,d_r$. Then the complete intersection $X=Q_1\cap\cdots\cap Q_r\subseteq \bP^{n+r}$ is P-regular.
\end{lem}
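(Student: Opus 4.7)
The plan is to adapt the dimension-counting proof of Lemma \ref{lem:general cpi P-regular} to the cone-constrained parameter space. First observe that since $Q_{s+1},\dots,Q_r$ are general and $r-s\ge 1$, the point $y$ generically does not lie on $Q_{s+1}\cap\cdots\cap Q_r$, hence generically $y\notin X$ and $P$-regularity at $y$ need not be checked. For a candidate point $x\in X$ with $x\neq y$, I choose homogeneous coordinates on $\bP^{n+r}$ with $y=[0{:}\cdots{:}0{:}1]$ and affine coordinates $(z_1,\dots,z_{n+r})$ at $x$ such that $z_{n+r}$ points along the line $\overline{xy}$. The cone condition on $Q_i$ (for $i\le s$) then translates to the statement that the Taylor expansion of $Q_i$ at $x$ is a polynomial in $z_1,\dots,z_{n+r-1}$ only, equivalently each truncated piece $q_{i,j}$ with $i\le s$ is forced into the subspace $\bC[z_1,\dots,z_{n+r-1}]_j$ of the full degree-$j$ homogeneous space.

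Next, let $B$ denote the cone-constrained parameter space and $U\to B$ the associated universal family. As in the proof of Lemma \ref{lem:general cpi P-regular}, it suffices to show that the locus $Z\subseteq U$ of pairs $(X,x)$ such that $X$ fails $P$-regularity at $x$ has codimension at least $n+1$ in $U$; this prevents $Z$ from dominating $B$. Fix $x\neq y$ and retrace the two-case argument there: (a) when all the $p_i$ (the restrictions of $q_{r+1},\dots,q_{n+r-2}$ to $T_xX$) vanish on a line, invoking the estimate \eqref{eq:contain a line}; and (b) when, for some hyperplane $\Sigma\subseteq\bP(T_xX)$, the intersection $W\cap \Sigma$ contains a positive-dimensional component that is not a line, invoking \eqref{eq:m_l=2}, \eqref{eq:m_l=3}, \eqref{eq:m_l>2}. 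Under the cone-at-$y$ condition, certain coefficients of the $q_{i,j}$ with $i\le s$ are forced to vanish, so some of the counted conditions may become redundant and the codimension estimates may decrease accordingly.

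The main obstacle, and the heart of the argument, is to bound this redundancy precisely. For each $i\le s$ and each $j$, the monomials of degree $j$ in $z_1,\dots,z_{n+r}$ that involve $z_{n+r}$ are precisely the coefficients of $q_{i,j}$ killed by the cone constraint. A direct bookkeeping of how many of these coefficients appear in the codimension counts of each sub-case of Lemma \ref{lem:general cpi P-regular} yields an upper bound on the loss: in sub-case (a), which is dominated by the higher-degree contributions from $q_{i,2},\dots,q_{i,d_i}$, the total loss does not exceed $\tfrac{1}{2}\sum_{i=1}^{s} d_i(d_i+1)$; in sub-case (b), which is dominated by the linear and quadratic contributions from each $Q_i$ with $i\le s$, the total loss does not exceed $2\sum_{i=1}^{s} d_i$. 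With the strengthened hypothesis on $n$ in the statement, the adjusted estimate \eqref{eq:contain a line} remains $\ge n+1$ while the adjusted estimates \eqref{eq:m_l=2}, \eqref{eq:m_l=3}, \eqref{eq:m_l>2} remain $\ge 2n$, so the codimension of $Z$ in $U$ is still at least $n+1$.

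Thus $Z$ cannot dominate $B$, and a generic tuple in $B$ defines a $P$-regular complete intersection, completing the proof. The hardest part will be the explicit bookkeeping in the previous paragraph, since one needs to track which restricted coefficients actually contribute to each of the counts in \eqref{eq:contain a line}--\eqref{eq:m_l>2} and verify that the worst-case loss is bounded as claimed; this is essentially an elementary but careful monomial count, paralleling the case analysis in Lemma \ref{lem:general cpi P-regular}.
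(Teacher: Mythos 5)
Your outline does follow the paper's strategy (pass to the cone-constrained parameter space, note that $y\notin X$ for general members, fix $x\neq y$, and redo the dimension count of Lemma \ref{lem:general cpi P-regular} with the constraint that the terms coming from $Q_1,\dots,Q_s$ omit the distinguished variable), but the decisive step --- the adjusted counts --- is only asserted, and the two numerical loss bounds you commit to are attached to the wrong cases. In the ``common line'' case the paper does not subtract a loss from \eqref{eq:contain a line}: when the line passes through the vertex $w$, each cone-constrained term contributes only a single condition (a cone with vertex $w$ vanishing at one point of such a line vanishes on all of it), so those contributions are essentially discarded and the count is run on the remaining equations, giving $\sum_{i=s+1}^{r}\tfrac12 a_i(a_i+1)-(r-s)\ge 2\bigl(n+r-2-\sum_{i\le s}d_i\bigr)-r\ge n+1$; this is precisely where the $2\sum_{i\le s}d_i$ part of the hypothesis enters, via $a_i\le d_i$ for $i\le s$. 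Your claim that the estimate \eqref{eq:contain a line} ``remains $\ge n+1$'' after subtracting a loss of $\tfrac12\sum_{i\le s}d_i(d_i+1)$ does not follow from anything you state: Lemma \ref{lem:general cpi P-regular} only gives \eqref{eq:contain a line} $\ge n+1$, not $\ge n+1$ plus the loss, so you would have to either redo the elementary minimization with the larger $n$, or use the compensating gain that lines through $w$ form an $(n-2)$-dimensional family rather than a $2(n-2)$-dimensional one --- neither of which appears in your proposal.

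Conversely, in the positive-dimensional (non-line) case your bound $2\sum_{i\le s}d_i$ on the loss is false in general. There the cone constraint forces the product-of-linear-forms family used in \eqref{eq:m_l>2} to consist of products of forms vanishing at $w$, costing $m_i$ conditions for \emph{every} restricted term $p_i$ of every degree $2\le m_i\le d_j$ coming from a cone hypersurface $Q_j$, $j\le s$; the total loss is $\sum' m_i$, which can be as large as $\sum_{i\le s}\bigl(\tfrac12 d_i(d_i+1)-1\bigr)$ and already exceeds $2\sum_{i\le s}d_i$ once some $d_i\ge 4$. This is exactly the case where the paper invokes the $\tfrac12\sum_{i\le s}d_i(d_i+1)$ part of the hypothesis (its estimate $\ge 3n-5-\tfrac12\sum_{i\le s}d_i(d_i+1)\ge 2n$). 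You also omit the dichotomy the paper makes explicit: if the direction toward $y$ does not lie in $T_xX$, the restricted terms are unconstrained and the original count applies verbatim; the constrained counts are only needed when $w\in\bP(T_xX)$, and within them one must still check the sub-cases where $w$ does or does not lie on $\Sigma$ or on $\langle B\rangle$. The final statement could likely still be salvaged with your accounting, since the maximum in the hypothesis dominates both quantities, but as written the heart of the lemma is both unproved and, where quantified, incorrectly quantified.
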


\begin{proof}
As before we assume $d=\sum_{i=1}^r d_i \ge n+r-2$. Since $s<r$, a general such complete intersection is smooth and does not contain $y$. For each $x\neq y\in \bP^{n+r}$, we may choose affine linear coordinates $z_*=(z_1,\cdots,z_{n+r})$ with origin at $x$ such that a hypersurface has a cone singularity at $y$ if and only if its equation only involves the variables $z_2,\cdots,z_{n+r}$. After a further change of variable, we may also assume that $T_x X$ (throughout the proof we use the same notations as in the proof of Lemma \ref{lem:general cpi P-regular}) is defined by the vanishing of some of the $z_i$'s. As before, it suffices to show that violation of P-regularity condition imposes at least $n+1$ conditions on the coefficients of the $p_i$. If $z_1$ vanishes on $T_x X$, then the restricted expression $p_i$ can be an arbitrary degree $m_i$ homogeneous polynomial and the same calculation in Lemma \ref{lem:general cpi P-regular} applies. If $z_1$ does not vanish on $T_x X$, the only restriction is that when $p_i$ comes from some $Q_j$ with $j\le s$, it can only vary among the equations of (arbitrary) degree $m_i$ hypersurfaces with a cone singularity at $w=[1:0:\cdots:0]\in \Pi$.

The proof then proceeds as before. For instance (this is the most complicated case), if $\Sigma\subseteq\Pi$ is a hyperplane containing $w$ and $B\subseteq \Sigma$ is a positive dimensional subvariety whose linear span $\left\langle B \right\rangle$ also contains $w$ but is different from a line, then requiring $p'_1,\cdots,p'_\ell$ ($m_\ell\ge 3$) to vanish on $B$ imposes at least 
\[\left(\sum_{i=1}^\ell m_i\right)(n-2-b)+\ell-\dim \mathrm{Gr}(b,n-2)- \sum\nolimits' m_i \ge 3n-5-\sum_{i=1}^s \frac{1}{2}d_i(d_i+1)\ge 2n\]
conditions on the coefficients of $p'_1,\cdots,p'_\ell$ (where $b$ is the codimension of $\left\langle B \right\rangle$ in $\Sigma$ and $\sum'$ sums over all $1\le i\le \ell$ such that $p_i$ comes from some $Q_j$ with $1\le j\le s$; clearly $\sum' m_i\le \sum_{i=1}^s \frac{1}{2}d_i(d_i+1)$). On the other hand, requiring all $p_i$ to contain a line passing through $w$ imposes at least $m_i+1$ (resp. $1$) on coefficients of $p_i$ if $p_i$ comes from the equation of $Q_j$ for some $j>s$ (resp. $j\le s$), and we get a total of at least
\begin{align*}
    \sum_{i=s+1}^r \frac{1}{2}a_i (a_i+1) - (r-s)  & \ge \sum_{i=s+1}^r \frac{1}{2}a_i (a_i+1) - r \\
    & \ge 2\sum_{i=s+1}^r a_i -r \ge 2(n+r-2-\sum_{i=1}^s d_i) - r \ge n+1
\end{align*}
conditions (to see the second inequality, note that $\sum_{i=s+1}^r a_i \ge n+r-2-\sum_{i=1}^s d_i\ge 3r$ and the quadratic form $\sum a_i (a_i+1)$ is minimized when the differences between the $a_i$'s are at most $1$, hence we may assume $a_i\ge 3$ for all $s+1\le i\le r$). The remaining cases can be treated in a similar and easier fashion as in Lemma \ref{lem:general cpi P-regular}.
\end{proof}

Recall that a smooth P-regular complete intersection $X\subseteq \bP^{n+r}$ of dimension $n\ge 2r+3$ is automatically $r$-strongly P-regular, we obtain the following immediate corollary in the spirit of \cite{Puk-cpi}.

\begin{cor} \label{cor:CY cpi}
Let $X\subseteq\bP^{n+r}$ be a smooth Calabi-Yau complete intersection of dimension $n$ by hypersurfaces of degrees $d_1\le\cdots\le d_r$. Assume that $n\ge 2r+3$, $d_r\ge 12$ and $X$ is P-regular. Then $\lct(X;|H|_\bQ) = 1$ where $H$ is the hyperplane class.
\end{cor}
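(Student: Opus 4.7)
My plan is to apply Lemma \ref{lem:lct bound} with the choice $m=r+1$. The hypotheses that $X$ is P-regular and $n\ge 2r+3$ ensure, via the remark preceding Lemma \ref{lem:general cpi P-regular}, that $X$ is already $r$-strongly P-regular, which is precisely the $(m-1)$-strong regularity required for $m=r+1$. The Calabi-Yau condition $\sum d_i=n+r+1$ forces $d=n+r+1$, so $k=\min\{d,n+r-2\}=d-3$, i.e.\ exactly the last three $q_{u,v}$ in the ordering are dropped from $q_1,\ldots,q_k$.

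I then compute $\prod_{\ell=1}^{k}\beta_\ell$ by grouping the slopes by hypersurface. Let $j_0(i)$ be the largest $j$ with $q_{i,j}$ appearing in $q_1,\ldots,q_k$. The slopes attached to the positions of $q_{i,1},\ldots,q_{i,j_0(i)}$ telescope to
\[
\frac{2}{1}\cdot\frac{3}{2}\cdots\frac{j_0(i)}{j_0(i)-1}\cdot 1=j_0(i),
\]
hence $\prod_{\ell=1}^k\beta_\ell=\prod_i j_0(i)$. Writing $a_i:=d_i-j_0(i)$, we have $a_i\ge 0$ and $\sum a_i=3$; the ordering rule (later positions have larger $v$, ties broken by larger $u$) forces $(a_r,a_{r-1},a_{r-2})\in\{(3,0,0),(2,1,0),(1,1,1)\}$. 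A direct check shows that $\prod_i j_0(i)/\prod_i d_i=\prod_i(1-a_i/d_i)$ is minimized in the case $(3,0,0)$ (which occurs iff $d_{r-1}\le d_r-2$) and equals $(d_r-3)/d_r$ there.

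Taking $m=r+1$ so that $q_m=q_{1,2}$, one has $\beta_m=\frac{3}{2}$ provided $q_{1,3}$ lies in $q_1,\ldots,q_k$; this holds whenever $d_1\ge 3$ since the position $2r+1$ of $q_{1,3}$ satisfies $2r+1\le n+r-2=k$. Lemma \ref{lem:lct bound} then yields
\[
\lct(X;|H|_\bQ)\ge\min\left\{1,\frac{2}{\deg X}\prod_{\ell\ne r+1}\beta_\ell\right\}=\min\left\{1,\frac{4}{3}\cdot\frac{\prod_i j_0(i)}{\prod_i d_i}\right\}\ge\min\left\{1,\frac{4(d_r-3)}{3d_r}\right\},
\]
and this lower bound equals $1$ iff $d_r\ge 12$, which is exactly our hypothesis. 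Combined with the trivial bound $\lct(X;|H|_\bQ)\le 1$, this yields the desired equality.

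The main technical obstacle is the case analysis for the telescoping product, establishing that $(3,0,0)$ is the worst configuration and that $d_r=12$ is indeed the precise threshold. The corner case $d_1=2$ (where $\beta_{r+1}=1$ so the choice $m=r+1$ becomes vacuous) would require either a separate argument exploiting the quadric factor of $X$ or a slightly finer strong-P-regularity input than the one coming for free from the Lefschetz theorem.
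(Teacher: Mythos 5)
Your argument follows the paper's route (apply Lemma \ref{lem:lct bound} with $m$ at the first quadratic position, get $(m-1)=r$-strong P-regularity for free from the Lefschetz theorem, and compute the telescoping product of slopes, where the Calabi--Yau condition $d=n+r+1$ means exactly three top-degree factors are dropped), and your numerics are correct: the worst case gives $\prod_i\beta_i\ge\frac{d_r-3}{d_r}\deg X$, and $\frac{4}{3}\cdot\frac{d_r-3}{d_r}\ge 1$ precisely when $d_r\ge 12$.

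However, the corner case $d_1=2$ that you flag but do not resolve is a genuine gap, not a removable technicality of your write-up: the hypotheses of the corollary allow $d_1=2$, and this is exactly the case the paper needs later (Corollary \ref{cor:codim 2 CY} is invoked with a quadric as one of the defining hypersurfaces, and the K-unstable examples in Theorem \ref{thm:main} use $e=2$). With your choice $q_m=q_{1,2}$, one has $\beta_{r+1}=1$ when $d_1=2$ and Lemma \ref{lem:lct bound} cannot be applied at $m=r+1$; and your suggested alternative of ``a finer strong-P-regularity input'' would amount to taking $m$ at the position of the quadratic term of the degree-$d_r$ equation \emph{in the given ordering}, which can sit as late as position $2r$, so you would need $(2r-1)$-strong P-regularity, which is \emph{not} what the Lefschetz argument provides. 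The paper's fix costs nothing: the tie-breaking rule among terms $q_{i,j}$ of equal degree is a pure convention in the setup of Section \ref{sec:lct-hyp} (the slopes, hypertangent systems, and Lemma \ref{lem:lct bound} are insensitive to it, and $r$-strong P-regularity only involves the linear terms $q_1,\dots,q_r$, which are unaffected), so one simply rearranges so that the first quadratic term $q_{r+1}$ comes from the degree-$d_r$ equation; then $\beta_{r+1}=\frac{3}{2}>1$ unconditionally because $d_r\ge 12\ge 3$ and $q_{r,3}$ lies within the first $k=n+r-2$ terms, and the rest of your computation goes through verbatim, with the same factor $\frac{2}{\beta_{r+1}}=\frac{4}{3}$.
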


\begin{proof}
Since $X$ is CY, $d=\sum_{i=1}^r d_i = n+r+1$, hence by the definition of $\beta_i$, the product $\prod_{i\ge1} \beta_i = \beta_1\beta_2\cdots\beta_{n+r-2}$ is obtained by removing three smallest factors (and then adding $r$ factors of $1$) from the following expression
\[\frac{2}{1}\cdot \frac{3}{2}\cdot \ldots \cdot \frac{d_1}{d_1-1}\cdot \frac{2}{1}\cdot \ldots \cdot \frac{d_2}{d_2-1}\cdot \ldots \cdot \frac{2}{1}\cdot \ldots \cdot \frac{d_r}{d_r-1},\]
which equals $\deg X = d_1 d_2\cdots d_r$. As $d_r\ge 12$, we get
\begin{equation} \label{eq:prod of beta}
    \prod_{i\ge1} \beta_i \ge \frac{d_r-3}{d_r-2}\cdot \frac{d_r-2}{d_r-1}\cdot \frac{d_r-1}{d_r} \cdot \deg X \ge \frac{3}{4} \deg X.
\end{equation}
By rearranging the sequence $q_1,\cdots,q_d$ we may assume that the first quadratic term $q_{r+1}$ comes from the degree $d_r$ equation and hence $\beta_{r+1}=\frac{3}{2}>1$. As $X$ is $r$-strongly P-regular under our assumptions, we have
\[\lct(X;|H|_\bQ)\ge \min\left\{1,\frac{2}{\beta_{r+1}\deg X}\prod_{i\ge1} \beta_i\right\} = \min\left\{1,\frac{4}{3\deg X}\prod_{i\ge1} \beta_i\right\}\ge 1\]
by Lemma \ref{lem:lct bound} and \eqref{eq:prod of beta}.
\end{proof}

For the construction of K-unstable Fano varieties, we will need the special case of codimension $2$ complete intersections where one of the defining hypersurfaces has a cone singularity.

\begin{cor} \label{cor:codim 2 CY}
Let $e\ge 2$, $n\ge 8+ \max\{2e,\frac{1}{2}e(e+1)\}$ be integers and let $f(x_0,\cdots,x_n)$ and $g(x_0,\cdots,x_{n+1})$ be homogeneous polynomials of degrees $e$ and $n+2-e$, respectively. Assume that the coefficients of $f$ and $g$ are general and let $X=(f=g=0)\subseteq\bP^{n+1}$, then $\lct(X;|H|_\bQ) = 1$ where $H$ is the hyperplane class.
\end{cor}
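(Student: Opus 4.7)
My plan is to reduce to Lemma \ref{lem:lct bound}, following the strategy of Corollary \ref{cor:CY cpi}, but with a judicious relabeling of the two defining hypersurfaces so that the argument goes through even when $e = 2$.

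The upper bound $\lct(X;|H|_\bQ) \le 1$ is immediate: for a general hyperplane section $D \in |H|$, the divisor $D$ is smooth and irreducible, so $(X,D)$ is plt and $\lct(X,D) = 1$. For the lower bound I would first apply Lemma \ref{lem:cone cpi P-regular} with $r = 2$, $s = 1$, $d_1 = e$, $d_2 = n+2-e$, whose numerical hypothesis is $n \ge 7 + \max\{2e, \tfrac{1}{2}e(e+1)\}$ and is therefore implied by the hypothesis of the corollary. This yields P-regularity of $X$ at every point. Since $n \ge 2r + 3 = 7$, the Lefschetz-type remark following Definition \ref{defn:m-strong reg} then upgrades this to $r$-strong, i.e.\ $2$-strong, P-regularity.

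Next I would relabel the defining equations, putting $F_1 = (g = 0)$ (so $d_1 = n+2-e$) and $F_2 = (f = 0)$ (so $d_2 = e$). This is permissible because $X$ is independent of the labeling, and the inequality $d_1 \ge d_2$ holds since $n \ge 2e - 2$ under the hypothesis. With this labeling, at every $x \in X$ the third term of Pukhlikov's sequence is $q_3 = q_{1,2}$, the quadratic part of $g$ at $x$; since $d_1 \ge 12 > 2$ we read off $\beta_3 = 3/2$. A routine telescoping of the formula $\beta_\ell = (v+1)/v$ over all valid indices, exactly analogous to the calculation in the proof of Corollary \ref{cor:CY cpi}, yields
\[
\prod_{i \ge 1} \beta_i \;=\; e(n-e).
\]
This relabeling is the main technical point: without it, the top-degree piece of $f$ would land at position $3$ when $e = 2$, forcing $\beta_3 = 1$ and rendering Lemma \ref{lem:lct bound} useless at $m=3$.

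Finally I would apply Lemma \ref{lem:lct bound} with $m = 3$, for which the required $2$-strong P-regularity has already been secured, to obtain
\[
\lct(X;|H|_\bQ) \;\ge\; \min\!\left\{1,\ \frac{2}{\deg X}\prod_{i\neq 3}\beta_i\right\} \;=\; \min\!\left\{1,\ \frac{4(n-e)}{3(n+2-e)}\right\}.
\]
Since $n \ge 8 + 2e \ge e + 6$, the inequality $4(n-e) \ge 3(n+2-e)$ holds and this minimum equals $1$. The hard part is not any single step but the cumulative bookkeeping of the $\beta_\ell$, together with the verification that the various numerical hypotheses line up; both become mechanical once the relabeling is in place, and indeed the stronger numerical hypothesis of the corollary (compared to the bare $n \ge e+6$ needed at the end) is entirely consumed by the cone-singularity input to Lemma \ref{lem:cone cpi P-regular}.
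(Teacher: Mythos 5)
Your argument is correct and is essentially the paper's: the paper simply applies Lemma \ref{lem:cone cpi P-regular} and then cites Corollary \ref{cor:CY cpi} wholesale, whose proof already contains your key rearrangement placing the first quadratic term on the degree-$(n+2-e)$ equation so that $\beta_{r+1}=\frac{3}{2}$, so you have in effect inlined that corollary. Only a harmless off-by-one to note: $\dim X=n-1$, so the numerical hypothesis of Lemma \ref{lem:cone cpi P-regular} is exactly $n\ge 8+\max\{2e,\frac{1}{2}e(e+1)\}$ (no slack), and the slope product is $e(n-1-e)$ rather than $e(n-e)$, which still gives $\frac{4(n-1-e)}{3(n+2-e)}\ge 1$ because $n\ge 10+e$.
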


\begin{proof}
By Lemma \ref{lem:cone cpi P-regular}, $X$ is P-regular. By assumption, $n+2-e\ge 12$ and $X$ is Calabi-Yau. Hence the result follows directly from Corollary \ref{cor:CY cpi}.
\end{proof}

When the dimension $n$ is not too small, Pukhlikov essentially proves that a general hypersurface of degree $d\ge n+1$ is $2$-strongly P-regular. Combining this with Lemma \ref{lem:lct bound} we obtain:

\begin{cor} \label{cor:n+1/d}
Let $n\ge 5$ and let $X\subseteq\bP^{n+1}$ be a general hypersurface of degree $d\ge n+1$, then $\lct(X;|H|_\bQ)\ge \frac{n+1}{d}$.
\end{cor}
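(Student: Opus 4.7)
The plan is to apply Lemma \ref{lem:lct bound} with the choice $m=3$. For a hypersurface ($r=1$), we have $e_i=i$ for $1\le i\le d$ and $k=\min\{d,n-1\}=n-1$, since $d\ge n+1$. Unpacking the definitions, the slopes therefore satisfy
\[\beta_i=\frac{i+1}{i}\ \text{for}\ 1\le i\le n-2,\qquad \beta_i=1\ \text{for}\ i\ge n-1.\]
In particular $\beta_3=4/3>1$, which is the condition $\beta_m>1$ needed to invoke the lemma; this is precisely where $n\ge 5$ enters, via $k-1\ge 3$.

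The main technical input is that a general hypersurface $X\subseteq\bP^{n+1}$ of degree $d\ge n+1$ is $2$-strongly P-regular. This is essentially contained in Pukhlikov's study of hypertangent linear systems (see \cite[\S 3]{Puk-book}); concretely, one must show that for every $x\in X$ and every linear form $h$ not proportional to the tangent form $q_1$, the codimension-three section $(h=q_1=q_2=0)\cap X$ (with $q_2$ the quadratic part of the defining equation centered at $x$) is irreducible and reduced. I would prove this by dimension counting in the universal family, in parallel with Lemma \ref{lem:general cpi P-regular} above: since $\dim X=n\ge 5$, the section has dimension $n-3\ge 2$, so once the failure locus is shown to have codimension larger than $\dim|\cO_{\bP^{n+1}}(d)|$, the required irreducibility/reducedness follows for a general $X$ by a Bertini-type argument.

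Granted this, Lemma \ref{lem:lct bound} with $m=3$ applies, and the relevant product is evaluated by telescoping: $\prod_{i=1}^{n-2}(i+1)/i=n-1$, and since $\beta_i=1$ for $i\ge n-1$, removing $\beta_3=4/3$ gives
\[\prod_{i\ge 1,\,i\ne 3}\beta_i=\frac{n-1}{4/3}=\frac{3(n-1)}{4}.\]
Hence the lemma yields
\[\lct(X;|H|_\bQ)\ge\min\left\{1,\ \frac{2}{d}\cdot\frac{3(n-1)}{4}\right\}=\min\left\{1,\ \frac{3(n-1)}{2d}\right\}.\]
Finally, $3(n-1)/2\ge n+1$ iff $n\ge 5$, and $d\ge n+1$ forces $(n+1)/d\le 1$, so $\lct(X;|H|_\bQ)\ge (n+1)/d$ as claimed.

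The main obstacle is the $2$-strong P-regularity generic statement. The excerpt notes that P-regularity alone implies $r$-strong P-regularity only when $n\ge 2r+3$, which for hypersurfaces ($r=1$) buys just $1$-strong P-regularity via Lefschetz; strengthening to $2$-strong P-regularity requires an additional control on the irreducibility of the quadric section cut out by $q_2$. Once the $2$-strong P-regularity is in place, the rest of the argument is a transparent slope computation and the optimality of the choice $m=3$ (taking $m=2$ would instead require $n\ge 7$, while higher $m$ would exceed the available regularity).
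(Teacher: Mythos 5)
The skeleton of your argument --- invoking Lemma \ref{lem:lct bound} with $m=3$, the computation $k=n-1$, $\beta_i=\frac{i+1}{i}$ for $i\le n-2$ and $\beta_i=1$ afterwards, and the resulting bound $\min\{1,\tfrac{3(n-1)}{2d}\}\ge\tfrac{n+1}{d}$ for $n\ge 5$ --- is exactly the paper's route and is correct. The gap is in the input you feed into the lemma: you assert that a general hypersurface of degree $d\ge n+1$ is $2$-strongly P-regular for every $n\ge 5$, to be proved ``by dimension counting in the universal family, in parallel with Lemma \ref{lem:general cpi P-regular}.'' This is not a routine step: the paper's own generic strong-regularity result (Lemma \ref{lem:m-reg}) is only proved for $n\gg 0$, and in fact $2$-strong P-regularity \emph{fails} in part of your range. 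On a general hypersurface, points $x$ where the quadratic term $q_2|_{\Sigma_x}$ has rank at most $4$ exist whenever the expected codimension $\binom{n-3}{2}$ of that rank locus is at most $n$, i.e.\ for $n=5,6,7$; at such a point one may choose the linear form $h$ so that $q_2$ restricted to $(h=q_1=0)$ has rank at most $2$, hence splits into two hyperplanes, and then $Z=(h=q_1=q_2=0)\cap X$ is visibly reducible (or non-reduced). So the statement you plan to prove is false for $n=5,6,7$, and no dimension count can rescue it; the definition quantifies over \emph{all} $h$, so a single degenerate tangent point ruins irreducibility.

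The paper avoids this by weakening the hypothesis: it introduces the condition $(I_2)$, requiring only that $Z$ be equidimensional with every irreducible component $Z_i$ satisfying $\frac{\mult_x}{\deg}(Z_i)=\frac{\mult_x}{\deg}(Z)$, observes that the proof of Lemma \ref{lem:lct bound} uses nothing more than this (one only needs the component $W_2$ picked in the exclusion step to differ from each component of $Z_2$ in the multiplicity-degree comparison, not that $Z_2$ be irreducible), and then quotes Pukhlikov's conditions (R1.2)--(R1.3) and \cite[Proposition 4]{Puk-product} to guarantee $(I_2)$ for a general hypersurface of degree $d\ge n+1$ once $n\ge 5$. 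To make your argument work you must either substitute this weaker condition (and check, as the paper does, that the lemma's proof goes through verbatim), or restrict to $n\ge 8$ and supply an honest quantitative count in the style of Lemmas \ref{lem:sing in codim c}--\ref{lem:m-reg}; as written, the key generic-regularity claim is both unproved and, for $n=5,6,7$, untrue.
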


\begin{proof}
Consider the following variant of the $m$-strongly P-regular condition on complete intersections (we use the same notation as in Definition \ref{defn:m-strong reg}):
    \begin{enumerate}
        \item[$(I_m)$] For any linear form $h=h(z_*)$ not in the span of $q_1,\cdots,q_r$, the algebraic set $Z=(h=q_1=\cdots=q_m=0)\cap X$ is equidimensional and for any irreducible component $Z_i$ of $Z$, we have \[\md(Z)=\md(Z_i).\]
    \end{enumerate}
When $m=2$ and $X$ is a hypersurface, this condition is implied by the conditions (R1.2)-(R1.3) of \cite[\S 2.1]{Puk-product}, thus by Lemma \ref{lem:general cpi P-regular} and \cite[Proposition 4]{Puk-product}, a general hypersurface of degree $d\ge n+1$ is P-regular and satisfies the condition $(I_2)$ when $n\ge 5$ (although \cite[Proposition 4]{Puk-product} is only stated for degree $n+1$ hypersurfaces, the same proof works in general). It is also not hard to see that the proof of Lemma \ref{lem:lct bound} applies without change to P-regular complete intersections that satisfy $(I_{m-1})$. In particular, taking $m=3$ in Lemma \ref{lem:lct bound}, we have $\lct(X;|H|_\bQ)\ge \frac{3(n-1)}{2d} \ge \frac{n+1}{d}$ as desired.
\end{proof}

Again, the case when $X$ is a Calabi-Yau hypersurface is of particular importance in our construction of weakly exceptional singularities and strictly K-semistable Fano variaties with largest alpha invariant.

\begin{cor} \label{cor:CY hypersurf}
Let $X\subseteq \bP^{n-1}$ be a general hypersurface of degree $n$ where $n=4$ or $n\ge 7$, then 
\[\lct(X;|H|_\bQ)\ge \frac{n-1}{n}.\]
\end{cor}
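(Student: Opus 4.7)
The proof naturally splits into two cases according to the value of $n$.

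For $n\ge 7$, the plan is a direct reduction to Corollary~\ref{cor:n+1/d}: the hypersurface $X\subseteq\bP^{n-1}$ has dimension $\nu:=n-2\ge 5$ and degree $d=n\ge \nu+1$, so that corollary applies (with $\nu$ playing the role of its ``$n$'') and yields
\[
\lct(X;|H|_\bQ) \;\ge\; \frac{\nu+1}{d} \;=\; \frac{n-1}{n}.
\]

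For $n=4$, $X\subseteq\bP^3$ is a general smooth quartic K3 surface, and I need $\lct(X;|H|_\bQ)\ge 3/4$. The hypertangent technology behind Lemma~\ref{lem:lct bound} collapses here, since $k=\min\{d,n+r-2\}$ equals $1$ for a surface in $\bP^3$ and the lemma gives only the trivial bound $\lct\ge \tfrac{1}{2}$; a hands-on argument is needed. Using that $\Pic(X)=\bZ\cdot H$ for a very general quartic K3, I would write any effective $\bQ$-divisor $D\sim_\bQ H$ as $D=\sum_i c_iC_i$ with each $C_i$ an integral curve in $|a_iH|$ and $\sum_i c_ia_i=1$. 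By linearity of $\ord_F$, the pair $(X,\tfrac{3}{4}D)$ will be lc as soon as $\lct(X,C)\ge \tfrac{3}{4a}$ holds for every integer $a\ge 1$ and every integral curve $C\in|aH|$, since then for every prime divisor $F$ over $X$,
\[
\ord_F\!\bigl(\tfrac{3}{4}D\bigr) \;=\; \tfrac{3}{4}\sum_i c_i\,\ord_F(C_i) \;\le\; \sum_i c_ia_i\,A_X(F) \;=\; A_X(F).
\]

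The heart of the argument is the case $a=1$, where $C\in|H|$ is an integral hyperplane section: the bound $\lct(X,C)\ge \tfrac{3}{4}$ is equivalent to saying that the plane-curve singularities of $C$ are of type $A_k$ with $k\le 3$ (the tacnode $A_3$ has $\lct=\tfrac{3}{4}$ exactly, while $A_4$ drops to $\tfrac{7}{10}<\tfrac{3}{4}$). The bound $\mult_x C\le 2$ follows from a codimension count in the spirit of Lemma~\ref{lem:general cpi P-regular}: the condition $q_2|_{T_xX}=0$ imposes $3$ conditions on $x\in X$ and so is vacuous on a generic quartic $X$ (of dimension $2$). Stratifying the $2$-dimensional family $\{(x,T_xX):x\in X\}$ of tangent-plane sections by $A_k$-singularity type, each step $A_{j-1}\to A_j$ corresponds to a codimension-one jet condition on the local equation of $X$ at $x$, so the $A_k$-stratum has dimension $3-k$ and is empty for $k\ge 4$ on a general $X$. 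The cases $a\ge 2$ are handled by combining the genus constraint $p_a(C)=2a^2+1$---which bounds the $\delta$-invariants and hence the admissible singularity types of integral $C\in|aH|$---with direct $\lct$ computations for the resulting plane-curve singularities. The main technical obstacle is a rigorous jet-transversality verification for the $A_k$-stratification in the $a=1$ case, which must be carried out by hand in terms of the coefficients of the defining quartic, without access to the inductive regularity framework of Lemma~\ref{lem:general cpi P-regular}.
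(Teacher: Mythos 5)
For $n\ge 7$ your argument is exactly the paper's: $X$ has dimension $n-2\ge 5$ and degree $n\ge (n-2)+1$, so Corollary \ref{cor:n+1/d} applies and gives $\lct(X;|H|_\bQ)\ge \frac{n-1}{n}$. That part is correct and identical in route to the paper.

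The $n=4$ case is where the proposal has genuine gaps; the paper does not argue by hand here but simply quotes \cite[Theorem 1.2 and Lemma 3.2]{ACS-conj-of-tian}. Your convexity step (from $\lct(X,C_i)\ge\frac{3}{4a_i}$ for integral $C_i\in|a_iH|$ to lc-ness of $(X,\tfrac34 D)$) is fine, but the inputs it needs are not established. First, the decomposition $D=\sum_i c_iC_i$ with $C_i\in|a_iH|$ uses $\Pic(X)=\bZ\cdot H$, which by Noether--Lefschetz holds only for a \emph{very general} quartic, whereas the statement (and its later use in Corollary \ref{cor:weakly exceptional}) concerns a general one; passing from very general to general would need a semicontinuity argument for $\lct(\,\cdot\,;|H|_\bQ)$ in the family of quartics that you do not supply. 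Second, the crucial $a=1$ claim --- that every tangent hyperplane section of a general quartic has only $A_k$ singularities with $k\le 3$, ruling out the singularities worse than $A_3$ (such as $A_4,\dots,A_6$, $D_4$, $D_5$, $E_6$) that are permitted by the genus bound $\delta\le 3$ and all have $\lct<\tfrac34$ --- is exactly the step you defer as ``a rigorous jet-transversality verification \dots carried out by hand''; as you acknowledge, this is the heart of the matter and it is missing. Third, for $a\ge 2$ you need the uniform bound $\lct(X,C)\ge\frac{3}{4a}$ for \emph{every} integral $C\in|aH|$, and ``genus constraint plus direct lct computations'' is only a plan: one must control all planar germs with $\delta\le 2a^2+1$, including multibranch germs of smooth branches with high pairwise tangency, whose $\lct$ can be close to $1/\mult_x C$, so the crude multiplicity bound coming from $\delta\ge\binom{m}{2}$ does not suffice and a genuinely quantitative case analysis is required. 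With these two key inputs unproved, the $n=4$ case remains open in your write-up; citing \cite{ACS-conj-of-tian}, as the paper does, is what closes it.
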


\begin{proof}
When $n=4$, the result follows from \cite[Theorem 1.2 and Lemma 3.2]{ACS-conj-of-tian}. If $n\ge 7$, this is a special case of Corollary \ref{cor:n+1/d}.
\end{proof}

\begin{cor} \label{cor:weakly exceptional}
Let $f=f(x_1,\cdots,x_n)$ be a general homogeneous polynomials of degree $n$ where $n=4$ or $n\ge 7$, then the singularity $0\in (f(x_1,\cdots,x_n)+x_{n+1}^{n+1}=0)\subseteq \bA^{n+1}$ is weakly exceptional.
\end{cor}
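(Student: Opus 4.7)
The plan is to realize the singularity $0 \in Y := (f + x_{n+1}^{n+1} = 0) \subseteq \bA^{n+1}$ as the vertex of an affine orbifold cone, identify its natural Koll\'ar component, and invoke Theorem \ref{thm:alpha and weakly exceptional}.

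First I would observe that $Y$ is stable under the $\bG_m$-action on $\bA^{n+1}$ with weights $(n+1,\ldots,n+1,n)$. Setting $V := \bP^{n-1}$, $X := (f=0) \subset V$ (smooth by generality of $f$), and $M := \tfrac{n}{n+1}X - (n-1)H$ where $H$ is the hyperplane class, one checks that $\Delta_M = \tfrac{n}{n+1}X$, $M \sim_\bQ \tfrac{1}{n+1}H$ is ample, and $nM \sim_\bQ -(K_V + \Delta_M)$. A direct computation of the graded pieces $H^0(V, \lfloor mM\rfloor)$ then confirms $Y \cong C_a(V,M)$. Since $(V,\Delta_M) = (\bP^{n-1}, \tfrac{n}{n+1}X)$ is log Fano (the coefficient of $X$ being strictly less than $1$ and $X$ smooth), the standard orbifold-cone criterion recalled in Section \ref{sec:prelim} shows that $Y$ is klt and that the weighted blow-up of $0 \in Y$ with weights $(n+1,\ldots,n+1,n)$ extracts a Koll\'ar component $(F,\Delta_F) \cong (V,\Delta_M)$.

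By Theorem \ref{thm:alpha and weakly exceptional}, it remains to establish $\alpha(F,\Delta_F) = \alpha(\bP^{n-1}, \tfrac{n}{n+1}X) \geq 1$; equivalently, $(\bP^{n-1}, \tfrac{n}{n+1}(X+D'))$ is lc for every effective $D' \sim_\bQ H$. When $X$ is not a component of $D'$, I would exploit the identity
\[
    \tfrac{n}{n+1}(X + D') \;=\; \tfrac{1}{n+1}\cdot D' \;+\; \tfrac{n}{n+1}\cdot\bigl(X + \tfrac{n-1}{n}D'\bigr),
\]
which expresses the boundary as a convex combination (weights $\tfrac{1}{n+1}+\tfrac{n}{n+1}=1$) of two boundaries that are individually lc: the pair $(\bP^{n-1}, D')$ is lc since $\alpha(\bP^{n-1}) = \tfrac{1}{n}$, while $(\bP^{n-1}, X + \tfrac{n-1}{n}D')$ is lc near $X$ by inversion of adjunction together with Corollary \ref{cor:CY hypersurf} (which gives $(X, \tfrac{n-1}{n}D'|_X)$ lc) and lc away from $X$ by reduction to the first summand. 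Since log discrepancies are linear in the boundary, the convex combination is lc. For general $D'$, write $D' = cX + D''$ with $X$ not a component of $D''$ and $c \in [0, \tfrac{1}{n}]$; the desired assertion then follows by interpolating between the previous case (at $c = 0$) and the trivially lc pair $(\bP^{n-1}, X)$ (obtained at $c = \tfrac{1}{n}$), using convexity of the lc locus in the coefficients of $X$ and the renormalized divisor $D''/(1-cn)$.

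The hard part will be the orbifold-cone identification and Koll\'ar-component extraction in the first step, which requires a careful weighted blow-up or graded-ring computation. Once this is in place, the $\alpha$-invariant bound is a formal consequence of Corollary \ref{cor:CY hypersurf} and the convex decomposition.
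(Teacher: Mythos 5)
Your proposal is correct and follows essentially the same route as the paper: the same weighted blow-up with weights $(n+1,\dots,n+1,n)$ yielding the Koll\'ar component $(\bP^{n-1},\frac{n}{n+1}S)$, reduction via Theorem \ref{thm:alpha and weakly exceptional} to $\alpha(\bP^{n-1},\frac{n}{n+1}S)\ge 1$, and the identical convex decomposition $\frac{n}{n+1}(S+D)=\frac{1}{n+1}D+\frac{n}{n+1}(S+\frac{n-1}{n}D)$ combined with Corollary \ref{cor:CY hypersurf}, inversion of adjunction, and $\lct(\bP^{n-1};|H|_\bQ)=1$. The only cosmetic differences are that you verify the Koll\'ar component through the orbifold-cone formalism (the paper cites Kudryavtsev's example directly) and you handle divisors containing $S$ by interpolation in the coefficient $c$ rather than by first reducing to irreducible $D$; both variants are valid.
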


\begin{proof}
Denote the given singularity by $(0\in X)$. The weighted blow-up of $\bA^{n+1}$ with weights $(n+1,\cdots,n+1,n)$ induces a plt blow-up $\pi:(E\subseteq Y)\rightarrow(0\in X)$ with Koll\'ar component $E$. The corresponding log Fano pair $(E,\Diff_E(0))$ is isomorphic to (c.f. \cite[Example 2.4]{K-plt-blowup})
\[\left( (f(x_1,\cdots,x_n)+x_{n+1}=0)\subseteq \bP(1,\cdots,1,n), \frac{n}{n+1}(x_{n+1}=0)\right) = \left( \bP^{n-1},\frac{n}{n+1}S \right)\]
where $S\subseteq\bP^{n-1}$ is the hypersurface defined by $f(x_1,\cdots,x_n)$. By Theorem \ref{thm:alpha and weakly exceptional}, our statement is equivalent to
\begin{equation} \label{eq:alpha>=1}
    \alpha\left( \bP^{n-1},\frac{n}{n+1}S \right)\ge 1.
\end{equation}
To see this, let $D\subseteq \bP^{n-1}$ be an effective divisor such that $D\sim_\bQ H \sim_\bQ -\frac{n+1}{n}(K_{\bP^{n-1}}+\frac{n}{n+1}S)$, we need to show that $(\bP^{n-1},\frac{n}{n+1}(S+D))$ is lc. We may assume that $D$ is irreducible since being lc is preserved under convex linear combinations. If $D$ is supported on $S$ then $D=\frac{1}{n}S$ and the result is clear. If $D$ is not supported on $S$ then by Corollary \ref{cor:CY hypersurf}, the pair $(S,\frac{n-1}{n}D|_{S})$ is lc, hence by inversion of adjunction, $(\bP^{n-1},S+\frac{n-1}{n}D)$ is lc. Since $\lct(\bP^{n-1};|H|_\bQ)=1$, the pair $(\bP^{n-1},D)$ is also lc. By taking convex linear combinations we see that $(\bP^{n-1},\frac{n}{n+1}(S+D))$ is lc as well since
\[\frac{n}{n+1}(S+D)=\frac{n}{n+1}\left( S+\frac{n-1}{n}D \right) + \frac{1}{n+1} D. \]
This finishes the proof.
\end{proof}

\begin{rem} \label{rem:gap}
It is not true that $\alpha\left( \bP^{n-1},\frac{n}{n+1}S \right)\ge 1$ for every smooth hypersurface $S\subseteq \bP^{n-1}$ of degree $n$. For example, let $S$ be a hypersurface with an Eckardt point at $x=[1:0:\cdots:0]$, e.g. $S=(x_2 g(x_1,\cdots,x_n)+h(x_3,\cdots,x_n)=0)$ (where $x_1,\cdots,x_n$ are the homogeneous coordinates of $\bP^{n-1}$) and let $D=(x_2=0)$, then it is not hard to see that $(\bP^{n-1},\frac{n}{n+1}(S+D))$ is not lc by considering the weighted blow-up at $x$ with weights $(w_2,\cdots,w_n)=(n,1,\cdots,1)$. In particular, the examples of \cite[Example 2.4]{K-plt-blowup} and \cite[Example 4.19]{P-plt-blowup} (which are claimed to be weakly exceptional there) are actually not weakly exceptional.
\end{rem}

Our next goal is to show that a general complete intersection (of given codimension) is $m$-strongly P-regular when $n\gg0$ so that we can apply Lemma \ref{lem:lct bound} to a larger range of complete intersections. This intuitively clear result is not needed in the rest of the paper but might be of independent interest. Readers who are mainly interested in the construction of strictly K-semistable and K-unstable Fano varieties with suitably large alpha invariants may skip this part and proceed to the next section.

The proof of $m$-strong P-regularity is done in several steps, spreading from Lemma \ref{lem:hyp contain X} to \ref{lem:m-reg}. The idea is to introduce a stronger condition by requiring the complete intersection $(h=q_1=\cdots=q_m=q_{1,d_1}=\cdots=q_{r,d_r}=0)$ (recall that each $q_i$ or $q_{i,j}$ is a homogeneous term of the local expression of one of the defining equations of $X$) to be normal (i.e. smooth in codimension $1$). We then show that the number of conditions imposed by the failure of this property at a given point $x\in \bP^{n+r}$ is at least quadratic in $n$, thus since the point $x$ only varies in an $(n+r)$-dimensional family, a general complete intersection will satisfy the strong P-regularity as we want.

\begin{lem} \label{lem:hyp contain X}
Let $X\subseteq \bP^n$ be a quasi-projective variety of dimension $m$. Let
\[\cS=\bP H^0(\bP^n,\cO_{\bP^n}(d))\]
be the space of all hypersurface of degree $d$ and $\cS_X$ the subset of hypersurfaces containing $X$, then
\[\codim_\cS \cS_X\ge\binom{m+d}{d}.\]
\end{lem}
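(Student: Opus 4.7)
The plan is to exhibit $\binom{m+d}{d}$ independent linear conditions on degree $d$ forms that are implied by containing $X$, by restricting to the $d$-th order infinitesimal neighborhood of a single smooth point. Concretely, I would first choose a smooth point $x\in X$, which exists since $X$ is a quasi-projective variety over $\bC$ whose smooth locus is a nonempty open subset. After fixing a hyperplane $H_0\subseteq\bP^n$ avoiding $x$, the line bundle $\cO_{\bP^n}(d)$ is trivialized on the affine chart $\bP^n\setminus H_0\cong\bA^n$, and $H^0(\bP^n,\cO(d))$ is identified with the space of polynomials of degree $\le d$ in the $n$ affine coordinates centered at $x$.

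The key observation is that under this identification the composite restriction
\[
\rho\colon H^0(\bP^n,\cO(d))\longrightarrow \cO_{\bP^n,x}/\fm_{\bP^n,x}^{d+1}\longrightarrow \cO_{X,x}/\fm_{X,x}^{d+1}
\]
is surjective: the first map is an isomorphism since both sides have dimension $\binom{n+d}{d}$ and every monomial of degree $\le d$ in the chosen affine coordinates lifts tautologically to a global section, while the second map is a quotient. Since $X$ is smooth of dimension $m$ at $x$, we have $\widehat{\cO_{X,x}}\cong\bC[[y_1,\ldots,y_m]]$, so $\dim_\bC\cO_{X,x}/\fm_{X,x}^{d+1}=\binom{m+d}{d}$.

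Any form $f$ vanishing on $X$ lies in the kernel of $\rho$, so writing $I(X)_d\subseteq H^0(\bP^n,\cO(d))$ for the subspace of forms vanishing on $X$, the map $\rho$ factors through a surjection $H^0(\bP^n,\cO(d))/I(X)_d\twoheadrightarrow \cO_{X,x}/\fm_{X,x}^{d+1}$. Taking dimensions yields
\[
\codim_\cS\cS_X \;=\; \dim_\bC\bigl(H^0(\bP^n,\cO(d))/I(X)_d\bigr) \;\ge\; \binom{m+d}{d},
\]
which is the desired inequality. The argument is entirely elementary and there is no substantive obstacle; the only point requiring minor care is ensuring that $X$ does admit a smooth point so that the local ring $\cO_{X,x}$ has the expected regular form, which is standard for quasi-projective varieties over $\bC$.
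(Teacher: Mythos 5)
Your proof is correct, and it takes a genuinely different route from the paper's. You bound $\dim_\bC\bigl(H^0(\bP^n,\cO_{\bP^n}(d))/I(X)_d\bigr)$ from below by showing that evaluation of $d$-jets at a single smooth point $x\in X$ is surjective onto $\cO_{X,x}/\fm_{X,x}^{d+1}$, whose dimension is $\binom{m+d}{d}$ because $\cO_{X,x}$ is regular of dimension $m$; since every form containing $X$ lies in the kernel, the codimension bound follows. The paper argues globally instead: it takes a general linear projection $\phi:\bP^n\dashrightarrow\bP^m$ whose restriction to $X$ is dominant and pulls back the complete linear system $|\cO_{\bP^m}(d)|$ to a linear subsystem $\cN\subseteq\cS$ of projective dimension $\binom{m+d}{d}-1$; no member of $\cN$ can contain $X$ (its image would have to contain the dense set $\phi(X)$), and two disjoint linear subspaces of the projective space $\cS$ satisfy $\codim_\cS\cS_X\ge\dim\cN+1$. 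Both arguments are short and give exactly the same bound. Yours is local and only needs a smooth point of dimension $m$ on $X$ -- the point you rightly flag, which is harmless since ``variety'' here is reduced and irreducible (and the lemma is applied in the paper to irreducible components), and which could even be avoided by replacing $\binom{m+d}{d}$ with the Hilbert--Samuel function of $\cO_{X,x}$; the paper's is a one-line projective-geometry argument with no local algebra. One small reading convention in your final step: if $I(X)_d=0$ then $\cS_X=\emptyset$ and the asserted equality $\codim_\cS\cS_X=\dim(H^0/I(X)_d)$ should be interpreted with the usual convention for the empty set, where the inequality holds trivially.
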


\begin{proof}
Clearly $\cS_X$ is a linear subspace of $\cS$. Let $\phi:\bP^n\dashrightarrow \bP^m$ be a general linear projection whose restriction to $X$ has dense image, and let $\cN=\phi^{-1} |\bP^m,\cO_{\bP^m}(d)|\subseteq \cS$, then by construction none of the hypersurfaces in $\cN$ contain $X$, thus $\cN\cap \cS_X=\emptyset$ and we have $\codim_\cS \cS_X\ge \dim\cN+1= \binom{m+d}{d}$.
\end{proof}

\begin{lem} \label{lem:cpi codim at most c}
Let $c\le q$ and $d_1\le \cdots \le d_q$ be positive integers. Let $X\subseteq \bA^n_{x_1,\cdots,x_n}$ be a quasi-projective variety $($i.e. it's locally closed in $\bA^n)$ of dimension $m\ge c$ and let $f_1,\cdots,f_q$ be regular functions on $X$. Let $\cT$ be the set of $g_i\in k[x_1,\cdots,x_n]_{\le d_i}$ $(i=1,\cdots,q)$ such that $Z=Z(f_1+g_1,\cdots,f_q+g_q)$ has codimension at most $c$ in $X$, then
\[\codim_\cS \cT\ge \sum_{i=1}^{q-c} \binom{m-c+d_i}{d_i}\]
where $\cS=\prod_{i=1}^q k[x_1,\cdots,x_n]_{\le d_i}$.
\end{lem}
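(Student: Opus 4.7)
The plan is to induct on $q\ge c$; the base case $q=c$ is trivial since the sum on the right-hand side is empty. For the inductive step (so $q\ge c+1$), I would consider the projection $\pi\colon\cS\to\cS':=\prod_{i=1}^{q-1}k[x_1,\ldots,x_n]_{\le d_i}$ that forgets $g_q$, and for each $(g_1,\ldots,g_{q-1})\in\cS'$ set $W=Z(f_1+g_1,\ldots,f_{q-1}+g_{q-1})\subseteq X$.

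The key observation is that the fiber of $\cT$ over $(g_1,\ldots,g_{q-1})$ depends on $\codim_X W$. When $\codim_X W>c$, the inclusion $Z\subseteq W$ forces $\codim_X Z>c$, so the fiber is empty. When $\codim_X W<c$, every $g_q$ works, since $Z=W\cap Z(f_q+g_q)$ has $\codim_X Z\le \codim_X W+1\le c$, giving a full fiber. When $\codim_X W=c$, the fiber consists of those $g_q$ for which $f_q+g_q$ vanishes identically on some codimension-$c$ component $W_j$ of $W$; applying Lemma \ref{lem:hyp contain X} to the projective closure $\overline{W_j}\subseteq\bP^n$ (identifying $k[x_1,\ldots,x_n]_{\le d_q}$ with $H^0(\bP^n,\cO_{\bP^n}(d_q))$ via homogenization), the set of such $g_q$ for a fixed $W_j$ has codimension at least $\binom{m-c+d_q}{d_q}$ in $k[x_1,\ldots,x_n]_{\le d_q}$, and the same bound survives for their finite union.

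Letting $\cT_A,\cT_B\subseteq\cS'$ denote the loci where $\codim_X W<c$ and $\codim_X W=c$, respectively, this analysis yields
\[\codim_\cS\cT\ge\min\Bigl(\codim_{\cS'}\cT_A,\ \codim_{\cS'}\cT_B+\binom{m-c+d_q}{d_q}\Bigr).\]
Noting that $\cT_A$ is contained in the analogous locus for the parameters $(q-1,c-1)$ and $\cT_B$ in the one for $(q-1,c)$, the inductive hypothesis supplies
\[\codim_{\cS'}\cT_A\ge\sum_{i=1}^{q-c}\binom{m-c+1+d_i}{d_i},\qquad \codim_{\cS'}\cT_B\ge\sum_{i=1}^{q-c-1}\binom{m-c+d_i}{d_i}.\]

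To conclude, the first estimate dominates the target $\sum_{i=1}^{q-c}\binom{m-c+d_i}{d_i}$ termwise because $\binom{m-c+1+d_i}{d_i}\ge\binom{m-c+d_i}{d_i}$; the second reduces to $\binom{m-c+d_q}{d_q}\ge\binom{m-c+d_{q-c}}{d_{q-c}}$, which follows from the ordering $d_{q-c}\le d_q$ together with the monotonicity of $\binom{m-c+d}{d}$ in $d$ (valid once $m\ge c$). This is the one place the hypothesis $d_1\le\cdots\le d_q$ is used. The main (mild) obstacle is the bookkeeping: verifying that both inductive instances meet the standing hypothesis (both $m\ge c-1$ and $m\ge c$ follow from $m\ge c$), handling the degenerate extreme $c=0$ where $\cT_A=\emptyset$, and confirming that in the codimension-$c$ case one really needs $f_q+g_q$ to vanish on a single component of $W$, so that the union-over-components argument only costs finitely many copies of the same codimension bound.
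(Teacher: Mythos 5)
Your argument is correct, and it is a genuinely different induction scheme from the paper's. The paper inducts on $c$: the strata where $Z$ has codimension strictly less than $c$ are absorbed by that induction, and on the exact-codimension-$c$ stratum it covers $\cT_c$ by the sets $\cT_{c,\sigma}$ indexed by $c$-element subsets $\sigma\subseteq\{1,\dots,q\}$, fixes $(g_j)_{j\in\sigma}$ with $Z_\sigma$ of codimension $c$, and then imposes all $q-c$ remaining vanishing conditions simultaneously via Lemma \ref{lem:hyp contain X}, so the full sum $\sum_{i=1}^{q-c}\binom{m-c+d_i}{d_i}$ appears in one shot (the ordering $d_1\le\cdots\le d_q$ enters only through minimizing over $\sigma$). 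You instead induct on $q$, peel off the last equation, stratify by $\codim_X W$, and invoke Lemma \ref{lem:hyp contain X} only once per step, pushing the rest into the recursion; the price is the extra bookkeeping you already flagged: extending the statement to $c=0$ (needed for the $\cT_A$ branch when $c=1$) and the two binomial comparisons $\binom{m-c+1+d_i}{d_i}\ge\binom{m-c+d_i}{d_i}$ and $\binom{m-c+d_q}{d_q}\ge\binom{m-c+d_{q-c}}{d_{q-c}}$, the latter being where the degree ordering is used. One small remark: over the stratum $\codim_X W<c$ your claim that every $g_q$ works can fail for quasi-projective $X$ (the further intersection may be empty, hence of larger codimension), but since you only use it to concede a full fiber, this does not affect the lower bound $\codim_\cS\cT\ge\codim_{\cS'}\cT_A$ on that piece, and your characterization of the fiber over $\cT_B$ (vanishing of $f_q+g_q$ on a top-dimensional component of $W$, costing $\binom{m-c+d_q}{d_q}$ conditions by Lemma \ref{lem:hyp contain X}, finitely many components) is exactly right. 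Both proofs ultimately rest on the same key input, Lemma \ref{lem:hyp contain X}; yours trades the paper's covering over $\binom{q}{c}$ index sets and auxiliary induction on $c$ for a single-equation recursion with slightly heavier numerics.
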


\begin{proof}
Let $\cT_j \subseteq \cS$ be the set of $(g_i)_{1\le i \le q}$ such that $Z$ has codimension exactly $j$. By induction on $c$ it suffices to show that
\begin{equation} \label{eq:codim T_c}
    \codim_\cS \cT_c\ge \sum_{i=1}^{q-c} \binom{m-c+d_i}{d_i}.
\end{equation}
Let $\sigma=(i_1,\cdots,i_c)\subseteq\{1,2,\cdots,q\}$ be an index set and let $\cT_{c,\sigma}\subseteq \cS$ be the set of $(g_i)$ such that $Z_\sigma=\cap_{i\in \sigma} Z(f_i+g_i)$ has codimension $c$ in $X$, then clearly
\begin{equation} \label{eq:T=union}
    \cT_c\subseteq \bigcup_{|\sigma|=c} \cT_{c,\sigma}.
\end{equation}
Fix an index set $\sigma=(i_1,\cdots,i_c)$ and a subsequence $(g_j)_{j\in \sigma}$ such that $Z_\sigma=\cap_{i\in \sigma} Z(f_i+g_i)$ has codimension $c$ in $X$, then $(g_1,\cdots,g_q)\in \cT_c$ if and only if $f_i+g_i$ vanishes on some irreducible component of $Z_\sigma$ for each $i\not\in \sigma$. Then set of such $(g_i)_{i\not\in\sigma}$ is either empty or (after translating by an element in this set) isomorphic to the set of $g_i$ that vanishes on some irreducible component of $Z_\sigma$. As $Z_\sigma$ is a complete intersection of dimension $m-c$, this imposes at least $\binom{m-c+d_i}{d_i}$ conditions on $g_i$ by Lemma \ref{lem:hyp contain X} (we identify $k[x_1,\cdots,x_n]_{\le d}$ with $H^0(\bP^n,\cO_{\bP^n}(d))$). Since this holds for every $(g_j)_{j\in \sigma}$, we get
\[\codim_\cS (\cT_{c,\sigma}\cap \cT_c)\ge \sum_{i\not\in\sigma} \binom{m-c+d_i}{d_i}.\]
Combining this with \eqref{eq:T=union} and $d_1\le\cdots\le d_q$, we obtain the desired inequality \eqref{eq:codim T_c}.
\end{proof}


\begin{lem} \label{lem:sing in codim c}
Given integers $m,d,c\ge 1$, then for any integer $n\ge m+c$ and any smooth quasi-projective variety $X\subseteq \bP^n$ of codimension $m$ we have $\codim_\cS(\cS\backslash\cT)\ge p(n)$ where
\[p(n) = \min\left\{\binom{n-m+d}{d},\left(\frac{n-m-1}{2}-c\right)\binom{n-m-c+d-1}{d-1}\right\}\]
and $\cS=\bP H^0(\bP^n,\cO_{\bP^n}(d))$ is the space of all hypersurface of degree $d$ and $\cT$ is the subset of degree $d$ hypersurfaces $D$ such that $X\cap D\subsetneqq X$ is smooth in codimension $c$.
\end{lem}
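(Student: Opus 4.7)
First I would decompose $\cS\setminus\cT=\cA\cup\cB$, where $\cA=\{D\in\cS : X\subseteq D\}$ and $\cB=\{D\in\cS : X\not\subseteq D \text{ and } X\cap D\text{ is not smooth in codimension }c\}$. For $\cA$, Lemma~\ref{lem:hyp contain X} applied to the $(n{-}m)$-dimensional smooth variety $X\subseteq\bP^n$ immediately yields $\codim_{\cS}\cA\geq\binom{n-m+d}{d}$, which already accounts for the first term of $p(n)$. The remaining task is to prove $\codim_{\cS}\cB\geq\bigl(\tfrac{n-m-1}{2}-c\bigr)\binom{n-m-c+d-1}{d-1}$.

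For $\cB$, set $k_{0}:=n-m-c-1$ and note that for every $D\in\cB$ the singular locus $\mathrm{Sing}(X\cap D)$ contains an irreducible component $Z_{D}$ with $\dim Z_{D}\geq k_{0}$. My plan is an iterated generic-slicing incidence correspondence. Set $r:=\lceil\tfrac{n-m-1}{2}\rceil-c$ and choose a general flag of linear subspaces $\bP^{n}\supset H_{1}\supset\cdots\supset H_{r}$ with $\codim_{\bP^{n}}H_{i}=i$. By Bertini, each iterated slice $X_{i}:=X\cap H_{1}\cap\cdots\cap H_{i}$ is smooth of dimension $n-m-i$, and since $i\leq r\leq k_{0}$ a dimension count forces $Z_{D}\cap X_{i}\neq\emptyset$, supplying a singular point $y_{i}$ of $X_{i}\cap D$ lying on $Z_{D}$. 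At each $y_{i}$, the conditions $f(y_{i})=0$ together with the vanishing of $df|_{y_{i}}$ modulo the conormal of $X_{i}$ — unpacked through restriction of $f$ to a generic codimension-$c$ linear slice through $y_{i}$ inside $H_{1}\cap\cdots\cap H_{i}$ — impose a codimension of at least $\binom{k_{0}+d}{d-1}=\binom{n-m-c+d-1}{d-1}$ on $\cS$. Summing the $r$ contributions yields the stated bound.

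The main obstacle is to verify that these $r$ packets of linear conditions are genuinely independent on $H^{0}(\bP^{n},\cO(d))$, and that each really contributes the full factor $\binom{n-m-c+d-1}{d-1}$ rather than something smaller. This should follow from a Koszul/Kodaira vanishing argument: the restriction maps $H^{0}(\bP^{n},\cO(d))\to H^{0}(X_{i},\cO_{X_{i}}(d))$ are surjective (by the Koszul resolution of $H_{1}\cap\cdots\cap H_{i}$ together with Kodaira vanishing on projective space), and with the $H_{i}$ chosen generically one can pick the $y_{i}$ in general position relative to the previously imposed constraints. The numerical threshold $r\leq\tfrac{n-m-1}{2}-c$ is exactly what ensures that after each slicing step enough room is left inside $X$ to accommodate this general-position requirement. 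Combining this with the estimate for $\cA$ gives $\codim_{\cS}(\cS\setminus\cT)\geq p(n)$.
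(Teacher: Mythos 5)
Your treatment of $\cA=\{D : X\subseteq D\}$ via Lemma~\ref{lem:hyp contain X} matches the paper and is fine, but the estimate for $\cB$ has a genuine gap: the claim that the singularity conditions at a single point $y_i$ impose $\binom{n-m-c+d-1}{d-1}$ conditions on $\cS$ is not correct. The requirement ``$f(y_i)=0$ and $df|_{y_i}\equiv 0$ modulo the conormal of $X_i$'' is a first-order jet condition at one point, so it imposes at most $\dim X_i+1$ linear conditions on the coefficients of $f$; moreover $y_i$ is not fixed but moves with $D$ in the $(\dim X_i)$-dimensional variety $X_i$, so after the standard incidence-variety projection the locus of $D$ for which $X_i\cap D$ is singular somewhere is just a discriminant, of codimension roughly $1$ in $\cS$. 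Summing over your $r\approx \frac{n-m-1}{2}-c$ slices therefore yields a codimension bound that is only linear in $n$, whereas the second term of $p(n)$ grows like $C n^{d}$. No choice of general flags or Koszul/Kodaira surjectivity of restriction maps can repair this, because the deficiency is in the per-point count itself, not in the independence of the packets.

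The paper's proof gets the large count from a different mechanism: the expensive conditions are that polynomials vanish on \emph{positive-dimensional} subvarieties, not jet conditions at points. Working in local coordinates where $\rd x_{m+1},\dots,\rd x_n$ trivialize $\Omega_X^1$, one writes $f=f_0+\sum_{i=1}^{q}x_{m+i}g_i$ with $q=\lfloor\frac{n-m}{2}\rfloor$ and $g_i$ depending only on $x_{n-q+1},\dots,x_n$; then $\rd f=\sum_{i=1}^q (f_i+g_i)\rd x_{m+i}+\cdots$ with the $f_i$ independent of the $g_i$, so $\mathrm{Sing}(X\cap D)\subseteq Z(f_1+g_1,\dots,f_q+g_q)$. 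If $X\cap D$ is singular in codimension $c$, this system has codimension at most $c$ in $X$, and Lemma~\ref{lem:cpi codim at most c} (which rests on Lemma~\ref{lem:hyp contain X} applied to an $(n-m-c)$-dimensional complete intersection component) shows this imposes at least $(q-c)\binom{n-m-c+d-1}{d-1}$ conditions on $(g_1,\dots,g_q)$. If you want to salvage a slicing-style argument, you would have to make your conditions at each stage say that $f$ (or a derived polynomial) vanishes on an entire component of dimension $n-m-c$, which is essentially what Lemmas~\ref{lem:hyp contain X} and \ref{lem:cpi codim at most c} already encode.
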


Note that $p(n)$ grows like $C n^d$ for some constant $C>0$ when $n\gg 0$.

\begin{proof}
Let $x\in X$. Since $X$ is smooth, we can find linear affine coordinates $x_1,\cdots,x_n$ at $x$ such that $\rd x_i$ ($i=m+1,\cdots,n$) generate $\Omega_X^1$ in a neighbourhood of $x$. We may cover $X$ by finitely many open subset with this property and it suffices to prove the lemma for each of these. Thus we may assume $X\subseteq\bA^n$ is locally closed with coordinates $x_1,\cdots,x_n$ chosen as above. Note that up to projectivization, $\cS$ can be identified with $k[x_1,\cdots,x_n]_{\le d}$, the space of degree at most $d$ polynomials. Also note that we may assume $n\gg0$ in proving the statement. By Lemma \ref{lem:hyp contain X}, the set of hypersurfaces $D$ in $\cS$ that contains $X$ has codimension at least $\binom{n-m+d}{d}\ge p(n)$. Thus it suffices to find $p(t)$ such that requiring $D\cap X$ to be singular in codimension $c$ imposes at least $p(n)$ conditions on the defining equation of $D$.

Now let $q=\lfloor \frac{n-m}{2} \rfloor$ and consider 
\[f=f_0+\sum_{i=1}^q x_{m+i}g_i \in k[x_1,\cdots,x_n]_{\le d}\]
where $f_0 \in k[x_1,\cdots,x_n]_{\le d}$ and $g_i \in k[x_{n-q+1},\cdots,x_n]_{\le d-1}$. If we can show that requiring $Z(f)\cap X$ to be singular in codimension $c$ imposes at least $p(n)$ conditions on the $(q+1)$-tuple $(f_0,g_1,\cdots,g_q)$ for some degree $d$ polynomial $p$, then we are done as then for each choice of $g_1,\cdots,g_q$ we would need the same number of $\ge p(n)$ conditions on $f_0$. For this it suffices to show that for each $f_0$, requiring $Z(f)\cap X$ to be singular in codimension $c$ imposes at least $p(n)$ conditions on $(g_1,\cdots,g_q)$. 

By the Jacobian criterion $Z(f)\cap X$ is singular at $x\in X$ if and only if $\rd f(x)=0\in \Omega^1_X \otimes k(x)$. Since $\rd x_i$ ($i=m+1,\cdots,n$) is a global basis of $\Omega_X^1$, we may rewrite $\rd f=\rd f_0 + \sum_{i=1}^q (g_i \rd x_{m+i} + x_{m+i}\rd g_i)$ as a linear combinations of $\rd x_i$ ($i=m+1,\cdots,n$):
\[\rd f = \sum_{i=1}^q (f_i+g_i) \rd x_{m+i} + \sum_{j=m+q+1}^n h_j \rd x_j\]
where $f_i$ and $h_j$ are regular functions on $X$ and as $\frac{\partial g_i}{\partial x_j}=0$ when $1\le i\le q$ and $1\le j \le m+q$, the $f_i$'s do not depend on $g_i$ (they only depend on $\rd f_0$ and the linear relatioins among $\rd x_i$). It then follows that the singular locus of $Z(f)\cap X$ is contained in the zero locus of $f_i+g_i$ ($i=1,\cdots,q$), hence $Z(f_1+g_1,\cdots,f_q+g_q)\subseteq X$ would have codimension at most $c$ if $Z(f)\cap X$ is singular in codimension $c$. By Lemma \ref{lem:cpi codim at most c}, this imposes at least $(q-c)\binom{n-m-c+d-1}{d-1}\ge \left(\frac{n-m-1}{2}-c\right)\binom{n-m-c+d-1}{d-1}$ conditions on $(g_1,\cdots,g_q)$. 
\end{proof}

\begin{lem} \label{lem:m-reg}
Given integers $m,r\ge 1$ and let $n\gg0$. Then for any $2\le d_1\le \cdots \le d_r$, a general complete intersection $X\subseteq \bP^{n+r}$ by hypersurfaces of degrees $d_1,\cdots,d_r$ is $m$-strongly P-regular.
\end{lem}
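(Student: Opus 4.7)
Following the strategy sketched preceding the statement, I plan to fix a point $x\in\bP^{n+r}$ and a linear form $h = h(z_*)$ not in the span of the linear parts $q_1,\ldots,q_r$, and to bound, uniformly in $(x,h)$, the codimension in the parameter space $B=\prod_{i=1}^r\bP H^0(\bP^{n+r},\cO(d_i))$ of complete intersections $X\ni x$ for which $Z := X\cap (h=q_1=\cdots=q_m=0)$ fails to be geometrically integral. Since pairs $(x,h)$ vary in a family of dimension at most $2(n+r)$, if I can show this codimension is $\Omega(n^2)$ then, combined with the codimension $\ge n+1$ bound from Lemma \ref{lem:general cpi P-regular} for P-regularity, the union of bad loci cannot dominate $B$ for $n\gg 0$, giving the claim. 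To obtain the quadratic codimension bound I will introduce the auxiliary condition that the complete intersection
\[
W := \{h = q_1 = \cdots = q_m = q_{1,d_1} = \cdots = q_{r,d_r} = 0\} \subseteq \bP^{n+r}
\]
is normal, and prove (i) that this implies $Z$ is geometrically integral, and (ii) that failure of normality of $W$ cuts out a subset of codimension $\Omega(n^2)$ in $B$.

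For step (i), I construct a flat projective family $\cZ\to\bA^1_t$ with $\cZ_1=Z$, $\cZ_0=W$, and $\cZ_t\cong Z$ for $t\neq 0$. Deform each $f_i = \sum_j q_{i,j}$ to $f_i^{(t)} := \sum_j t^{d_i-j}q_{i,j}$ (so $f_i^{(1)}=f_i$, $f_i^{(0)}=q_{i,d_i}$) and set $\cZ_t := \{f_1^{(t)} = \cdots = f_r^{(t)} = 0\} \cap (h=q_1=\cdots=q_m=0)$; the $\bG_m$-action $[x_0:x_*]\mapsto[tx_0:x_*]$ on $\bP^{n+r}$ fixes $x$ and preserves all the cones $h=0$ and $q_j=0$ (since these equations depend only on $x_*$), identifying $\cZ_t$ with $\cZ_1 = Z$ for $t\neq 0$. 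The degrees of the defining equations are constant in $t$, so for the general $X$ the family $\cZ$ is flat (both $Z$ and $W$ are complete intersections of the expected codimension, hence have the same Hilbert polynomial) and projective, hence proper. Now if $W$ is normal, then being a complete intersection it is $S_2$ automatically, so combined with $R_1$ from normality it is reduced; moreover, since $\dim W = n-m-1\ge 1$, Lefschetz's hyperplane connectedness theorem implies $W$ is connected, hence (normal plus connected) irreducible. Thus $W$ is geometrically integral, and by openness of geometric integrality in flat proper families the nearby fibers $\cZ_t$ are geometrically integral; by the $\bG_m$-isomorphism, so is $Z = \cZ_1$.

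For step (ii), normality of $W$ is equivalent to smoothness in codimension $1$. Since all defining equations $h,q_j,q_{i,d_i}$ depend only on $x_*$, $W$ is the cone with vertex $x$ over a complete intersection $W_{\mathrm{base}}\subseteq\bP^{n+r-1}$ cut by the same equations, and $W$ is smooth in codimension $1$ iff $W_{\mathrm{base}}$ is (the vertex has codimension $\dim W\ge 2$ in $W$ for $n\gg 0$). I build $W_{\mathrm{base}}$ as an iterated complete intersection: first cut $\bP^{n+r-1}$ with the linear forms coming from $h,q_1,\ldots,q_r$ (yielding a smooth linear subspace), then successively intersect with each of the nonlinear equations $q_{r+1},\ldots,q_m,q_{1,d_1},\ldots,q_{r,d_r}$. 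At each such step (a hypersurface of degree $\ge 2$), Lemma \ref{lem:sing in codim c} (with $c=1$ and $d=$ the degree of the cut) applied to the current smooth ambient variety bounds the codimension in the coefficient space by $\Omega(n^d)=\Omega(n^2)$. The main obstacle is executing this iteration carefully: at each stage Lemma \ref{lem:sing in codim c} requires the previous intersection to be smooth, which I will arrange by first imposing additional Bertini-type genericity conditions (with tractable codimension contributions); one must also handle potential coincidences between the sequences $\{q_1,\ldots,q_m\}$ and $\{q_{1,d_1},\ldots,q_{r,d_r}\}$ (occurring when $m$ is comparable to $r\cdot\min_i d_i$) via careful bookkeeping of which equations give independent cuts. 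Once these technical points are settled, summing the codimension contributions (even from a single top-degree cone $q_{i,d_i}$ suffices) yields the desired $\Omega(n^2)$ bound.
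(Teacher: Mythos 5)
Your overall plan coincides with the paper's: reduce integrality of $Z$ to integrality of the top-degree complete intersection $W$, and show that failure of normality of $W$ imposes far more conditions on the coefficients than the dimension of the family of pairs $(x,h)$, using Lemma \ref{lem:sing in codim c}. Your step (i) (degenerating $Z$ to $W$ via $f_i^{(t)}$ and invoking openness of geometric integrality in flat proper families) is a workable variant of the paper's observation that $W$ is the hyperplane section $Z\cap H_\infty$ at infinity; just note that flatness of your family is not automatic: it amounts to saying that, after discarding those $f_i$ all of whose graded pieces lie in $\{q_1,\dots,q_m\}$ (these are redundant for every $t$, since the sequence is sorted by degree), both $Z$ and $W$ are proper intersections of the remaining forms of one and the same degree sequence, and this codimension statement itself has to be established by the same condition count as P-regularity.

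The genuine gap is in step (ii). Lemma \ref{lem:sing in codim c} needs a smooth ambient variety, and you propose to guarantee that each intermediate intersection is smooth by ``Bertini-type genericity conditions with tractable codimension contributions''. This cannot be arranged: for a fixed pair $(x,h)$, the locus of coefficients for which, say, the first nonlinear cut $W_1=\Sigma\cap(p_1=0)$ acquires a singular point is a discriminant, of codimension one only; since $(x,h)$ varies in a family of dimension roughly $2(n+r)$, a general $X$ will always possess points $x$ and hyperplanes $h$ for which the intermediate intersections are singular (already for a quadratic $p_1$, taking $h$ tangent to the dual quadric does this). So full smoothness of the intermediate $W_i$ is not a condition you can impose on $X$, and your induction does not close as stated. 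The fix, which is what the paper does, is to apply Lemma \ref{lem:sing in codim c} to the \emph{smooth locus} of $W_{i-1}$ (the lemma only requires a smooth quasi-projective ambient) and to demand at the $i$-th step only that $W_i$ be smooth in codimension $m+1-i$: the uncontrolled singularities of $W_i$ lying over $\mathrm{Sing}(W_{i-1})$ then automatically have codimension at least $m+2-i$ in $W_i$, so the weakened statement propagates, and after $\ell\le m$ steps $W$ is still smooth in codimension one, hence normal, and connected by the complete-intersection connectedness theorem, hence integral. Some decreasing-codimension device of this kind is indispensable; without it the quadratic codimension bound you want for failure of normality of $W$ is out of reach.
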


\begin{proof}
By Lemma \ref{lem:general cpi P-regular}, a general such complete intersection is P-regular, hence we only need to verify the irreducibility and reducedness of $Z=(h=q_1=\cdots=q_m=0)\cap X$ in Definition \ref{defn:m-strong reg}. As noted before, we may assume $m>r$ (otherwise by Lefschetz hyperplane theorem it suffices to take $n\ge 2r+3$). As in the proof of P-regularity, it suffices to show that for any $x\in\bP^{n+r}$ and any choice of the linear terms $h,q_1,\cdots,q_r$, the failure of the irreducibility or reducedness of $Z$ imposes at least $2n$ conditions on the coefficients of the remaining $q_i$. Let $p_1,\cdots,p_\ell$ ($\ell\le m$) be the sequence obtained by removing terms that appear more than once from the sequence $q_{r+1},\cdots,q_m,q_{1,d_1},\cdots,q_{r,d_r}$ and then restricting to the linear subspace $\Sigma=(h=q_1=\cdots=q_r=0)=(h=0)\cap \bP(T_x X)\cong\bP^{n-2}$. Note that $W=(p_1=\cdots=p_\ell=0)\cap \Sigma$ can be identified with $Z\cap H_\infty$ where $H_\infty$ is the hyperplane at infinity and we may assume that $W$ has codimension $\ell$ by a similar proof of P-regularity, hence if $W$ is irreducible and reduced, then the same holds for $Z$. Therefore, it suffices to show that for $W$ to be reducible or non-reduced imposes at least $2n$ conditions on the coefficients of $p_1,\cdots,p_\ell$. Let $m_i=\deg p_i\ge 2$. 

Assume from now on $n\gg 0$. Apply Lemma \ref{lem:sing in codim c} to $W_0=\Sigma$ we see that requiring $W_1=W_0\cap (p_1=0)$ to be singular in codimension $m$ imposes more than $2n$ conditions on $p_1$ (the number of conditions is at least a polynomial of degree $2$ in $n$). Fix a choice of $p_1$ such that $W_1$ is smooth in codimension $m$. By Lemma \ref{lem:sing in codim c} applied to the smooth locus of $W_1$, we again need more than $2n$ conditions on $p_2$ for $W_2=W_1\cap (p_2=0)$ to be singular in codimension $m-1$. Continue in this way, we see that requiring $W=W_i$ ($1\le i\le \ell$) to be singular in codimension $m+1-i\ge 1$ imposes more than $2n$ conditions on the coefficients of $p_1,\cdots,p_i$. Since each $W_i$ is already a complete intersection, they are also normal if they are smooth in codimension $1$; an easy induction using \cite[Corollary 7.9]{Hartshorne} then shows that all the $W_i$ are connected, normal and therefore integral. In particular, $W=W_\ell$ is integral. In other words, asking $W$ to be reducible or non-reduced imposed more than $2n$ conditions on coefficients of $p_1,\cdots,p_\ell$ and we are done.
\end{proof}

Combining this with Lemma \ref{lem:lct bound} in the hypersurface case, we can now give the proof of Theorem \ref{thm:lct-general-hyp}.

\begin{proof}[Proof of Theorem \ref{thm:lct-general-hyp}]
When $d\le n$, the result follows from \cite[Theorem 1.3]{C-hypersurface-lct}. If $d\ge n+1$, then choose sufficiently large integer $m$ such that $\frac{2(m+1)}{(2-\epsilon)(m+2)} > 1$, by Lemma \ref{lem:m-reg} a general hypersurface $X\subseteq\bP^{n+1}$ of degree $d$ is $m$-strongly P-regular when $n\gg 0$, thus by Lemma \ref{lem:lct bound} we have
\[\lct(X;|H|_\bQ)\ge\min\left\{1,\frac{m+1}{m+2}\cdot \frac{2(n-1)}{d}\right\}\ge \min\left\{1,\frac{2(m+1)}{(2-\epsilon)(m+2)}\cdot \frac{n-1}{n}\right\}\ge 1\]
and the result follows since we always have $\lct(X;|H|_\bQ)\le 1$.
\end{proof}

\section{Main constructions}\label{sec:construction}

\subsection{Strictly K-semistable example}

In this section, we construct singular Fano varieties $X$ of dimension $n\gg 1$ that are strictly K-semistable and $\alpha(X)=\frac{n}{n+1}$.

By Theorem \ref{thm:fujalpha}, every such Fano variety has at least one weakly exceptional singularity $x$ whose corresponding (unique) Koll\'ar component has log discrepancy $n$. One such weakly exceptional singularity is given by Corollary \ref{cor:weakly exceptional}, with corresponding Koll\'ar component $F\cong \bP^{n-1}$ and different $\Delta_F=(1-\frac{1}{n+1})S$ where $S=(f(x_1,\cdots,x_n)=0)$ is a general smooth hypersurface of degree $n$ in $\bP^{n-1}$. We first construct the K-polystable degeneration as in Corollary \ref{cor:polystable degeneration}. Let $M:=\frac{n}{n+1}S-(n-1)H$ be a $\bQ$-divisor on $\bP^{n-1}$ where $H$ is a hyperplane in $\bP^{n-1}$. Let $X_0$ be the projective orbifold cone $C_p(\bP^{n-1},M)$.

\begin{lem} \label{lem:X_0}
Let $n=4$ or $n\ge 7$. Then the projective orbifold cone $X_0$ is a K-polystable Fano variety. Moreover, $X_0$ is isomorphic to the hypersurface $(x_0 f(x_1,\cdots,x_n)+x_{n+1}^{n+1}=0)$ in $\bP^{n+1}$ and under this isomorphism, $\bP^{n-1}_\infty$ can be identified with the locus $(x_0=x_{n+1}=0)$.
\end{lem}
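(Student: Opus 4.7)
The plan has two parts: establishing K-polystability of $X_0$ via the orbifold cone machinery of Proposition \ref{prop:kpscone}, and then identifying $X_0$ with the hypersurface $Y := (x_0 f(x_1, \ldots, x_n) + x_{n+1}^{n+1} = 0) \subset \bP^{n+1}$ by direct comparison of coordinate rings.

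For K-polystability, a routine calculation using $S \sim nH$ shows that $M \sim_\bQ \frac{1}{n+1}H$ is ample, that $\{M\} = \frac{n}{n+1}S$ so $\Delta_M = \frac{n}{n+1}S$, and that $-K_V - \Delta_M = \frac{n}{n+1}H \sim_\bQ nM$; this places us in the setup of Proposition \ref{prop:kpscone} with $V = \bP^{n-1}$, $\Delta = \frac{n}{n+1}S$, and $r = n$. The log Fano pair $(\bP^{n-1}, \frac{n}{n+1}S)$ satisfies $\alpha \geq 1 > \frac{n-1}{n}$ by Corollary \ref{cor:weakly exceptional}, hence is K-stable (in particular K-polystable) by Tian's criterion (Theorem \ref{thm:tian}). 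Proposition \ref{prop:kpscone}(2) then yields K-polystability of $X_0 = C_p(V, M)$, which is $\bQ$-Fano since the boundary coefficient is $1 - r/n = 0$.

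For the isomorphism $Y \cong X_0$, the key observation is that $Y$ carries a natural $\bG_m$-action of weights $(n+1, 0, \ldots, 0, 1)$ on $(x_0, x_1, \ldots, x_n, x_{n+1})$, under which the defining equation is semi-invariant. Its fixed locus on $Y$ is the vertex $p = [1:0:\cdots:0]$, whose analytic germ recovers the weakly exceptional singularity of Corollary \ref{cor:weakly exceptional}, together with the divisor $V_\infty := Y \cap (x_0 = x_{n+1} = 0) \cong \bP^{n-1}$, onto which the flow retracts as $t \to 0$. A direct adjunction computation shows $V_\infty \sim_\bQ \frac{1}{n+1} H|_Y$, hence $-K_Y \sim_\bQ (n+1)V_\infty$, and the different divisor of $V_\infty$ in $Y$ equals $\frac{n}{n+1}S$, matching $\Delta_M$.

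To promote this structural match to a scheme-theoretic isomorphism, compare the coordinate rings with respect to the identification $\cO_Y(1) \leftrightarrow \cO_{X_0}(n+1)$. The $(n+2)$-dimensional space $H^0(Y, \cO_Y(1))$ decomposes into $\bG_m$-eigenlines of weights $n+1, 0, \ldots, 0, 1$ spanned by $x_0, x_1, \ldots, x_{n+1}$, while the degree $n+1$ piece of $R' := \bigoplus_{m, i \geq 0} H^0(\bP^{n-1}, \cO(\lfloor mM \rfloor)) \cdot s^i$ splits as $\bC \cdot s^{n+1}$ (weight $n+1$), the $n$-dimensional subspace $f \cdot H^0(\bP^{n-1}, \cO(H))$ (weight $0$), and $\bC \cdot fs$ (weight $1$). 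Sending $x_0 \mapsto s^{n+1}$, $x_i \mapsto f w_i$ for $1 \leq i \leq n$, and $x_{n+1} \mapsto \zeta fs$ with $\zeta^{n+1} = -1$ extends to a surjection $\bC[x_0, \ldots, x_{n+1}]/(x_0 f + x_{n+1}^{n+1}) \twoheadrightarrow R'^{(n+1)}$, which is an isomorphism by a Krull dimension count between two integral domains of dimension $n+1$. The main obstacle is the bookkeeping with the integer floors $\lfloor mM \rfloor$ and the choice of trivialization of $\cO(nH - S) \cong \cO$ via multiplication by $f$; this trivialization identifies $u^{n+1}$ with $-f^{n+1}$ for a suitable generator $u$ of $H^0(\bP^{n-1}, \cO(nH - S))$, making the defining relation transparent.
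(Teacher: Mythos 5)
Your first half (K-polystability) is exactly the paper's argument: $-K_{\bP^{n-1}}-\frac{n}{n+1}S\sim_\bQ nM$, K-stability of $(\bP^{n-1},\frac{n}{n+1}S)$ from Corollary \ref{cor:weakly exceptional} plus Tian's criterion, and then Proposition \ref{prop:kpscone}. The identification of the graded pieces of $R'$ in total degree $n+1$ (one line of $s$-weight $n+1$, one line of weight $1$, an $n$-dimensional piece of weight $0$) and the sign bookkeeping via the trivialization of $\cO(S-nH)$ also match what the paper does when it chooses the sections $s_0=s^{n+1}$, $s_{n+1}=z^{n-1}s$, $s_i=y_iz^n$.

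The genuine gap is the word ``surjection.'' Your map $\bC[x_0,\dots,x_{n+1}]/(x_0f+x_{n+1}^{n+1})\to R'^{(n+1)}$ is surjective if and only if the $(n+1)$-st Veronese of $R'$ is generated in degree one by these $n+2$ elements, i.e.\ if and only if the linear system they span is very ample and gives a projectively normal embedding of $X_0$. That is precisely the nontrivial content of the ``Moreover'' part of the lemma, and you assert it with no argument; the Krull dimension count only rules out a kernel once surjectivity is known, so it cannot substitute for it. (If instead $R'^{(n+1)}$ means the subring generated by the degree-$(n+1)$ piece, surjectivity is tautological but then you have only identified $Y$ with $\Proj$ of that subring, and you still owe the argument that this $\Proj$ equals $X_0$ --- the same issue in disguise.) To close the gap you must either verify generation directly, by writing each graded piece $H^0(\bP^{n-1},\cO(\lfloor mM\rfloor))\cdot s^{i}$ as monomials in $s^{n+1}$, $z^{n-1}s$, $y_jz^n$ (a floor-function computation in $m\bmod (n+1)$), or argue geometrically as the paper does: check base point freeness of the span of $s_0,\dots,s_{n+1}$ (so one gets a morphism $\phi:X_0\to\bP^{n+1}$ landing in the hypersurface), note $\phi$ is finite since $L$ is ample, compute $(L^n)=(n+1)^n(M^{n-1})=n+1=\deg X_0'$ to get birationality, check the hypersurface is normal, and invoke Zariski's main theorem. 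Also a small slip: both rings have Krull dimension $n+2$, not $n+1$ (they are cones over $(n+1)$-dimensional projective varieties); this does not affect the logic of the injectivity step.
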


\begin{proof}
Since $\alpha(\bP^{n-1},(1-\frac{1}{n+1})S)\geq 1$ for a general smooth hypersurface $S\subset\bP^{n-1}$ of degree $n$ by Corollary \ref{cor:weakly exceptional}, we know that $(\bP^{n-1},(1-\frac{1}{n+1})S)$ is K-stable. Hence the K-polystability of $X_0=C_p(\bP^{n-1},M)$ follows from Proposition \ref{prop:kpscone} and the fact that $-K_{\bP^{n-1}}-(1-\frac{1}{n+1})S\sim_{\bQ}n M$.
By definition, we have
\[
X_0=\Proj\bigoplus_{m=0}^{\infty}\bigoplus_{i=0}^{\infty} H^0(\bP^{n-1},\cO_{\bP^{n-1}}(\lfloor m M\rfloor))\cdot s^i.
\]
Denote by $R_{m,i}:=H^0(\bP^{n-1},\cO_{\bP^{n-1}}(\lfloor mM\rfloor))\cdot s^i$ and $R:=\oplus_{m=0}^\infty\oplus_{i=0}^\infty R_{m,i}$.
Let $L$ be the coherent sheaf $\cO(n+1)$ on $X_0$ with respect to the grading $R_{m,i}\mapsto m+i$. 
Then $L\cong \cO_{X_0}((n+1)\bP_{\infty}^{n-1})$; since $(n+1)M$ is Cartier on $\bP^{n-1}$, we also see that $L$ is an ample line bundle on $X_0$. We will choose sections $s_0,s_1,\cdots,s_{n+1}$ of $L$ to obtain the desired embedding from $X_0$ to $\bP^{n+1}$. 

To begin with, let us choose $s_0:=1\cdot s^{n+1}\in R_{0,n+1}$. We also fix a generator $z$ of $H^0(\bP^{n-1},\cO_{\bP^{n-1}}(S-nH))$. Since $\lfloor nM\rfloor=(n-1)(S-nH)$, we choose $s_{n+1}:=z^{n-1}\cdot s\in R_{n,1}$.
Let us choose sections $y_1,\cdots,y_n\in H^0(\bP^{n-1},\cO_{\bP^{n-1}}(H))$ that correspond to projective coordinates $x_1,\cdots,x_n$ of $\bP^{n-1}$. Then $y_i z^n$ is a section of $\cO_{\bP^{n-1}}(nS-(n^2-1)H)=\cO_{\bP^{n-1}}(\lfloor(n+1)M\rfloor)$. For each $1\leq i\leq n$, we choose $s_i:=y_i z^n\cdot 1\in R_{n+1,0}$. 

To prove that $[s_0:\cdots:s_{n+1}]$ induces the embedding from $X_0$ to $\bP^{n+1}$, we first show that the linear system generated by $\{s_0,\cdots,s_{n+1}\}$ is base point free. Let $I\subset R$ be the graded ideal generated by $\{s_0,\cdots,s_{n+1}\}$. Thus $s\in \sqrt{I}$ since $s_{0}=s^{n+1}\in I$. Thus $R_{m,i}=R_{m,0}\cdot s^i\subset \sqrt{I}$ whenever $i>0$. If $m>0$ and $i=0$, then any element in $R_{(n+1)m,0}$ is a polynomial in $s_1,\cdots,s_n$. Thus $R_{(n+1)m,0}\subset I$ which implies $R_{m,0}\subset \sqrt{I}$ for any $m>0$. As a result, the base point freeness of the linear system generated by $\{s_0,\cdots,s_{n+1}\}$ follows from $R_+=\sqrt{I}$. Let us denote the induced morphism by $\phi: X_0\to \bP^{n+1}$. 

Let $X_0'$ be the hypersurface in $\bP^{n+1}$ defined by $(x_0 f(x_1,\cdots,x_n)+x_{n+1}^{n+1}=0)$.
We will show that the image $\phi(X_0)$ is contained in $X_0'$, i.e. $s_0 f(s_1,\cdots,s_n)+s_{n+1}^{n+1}=0$. It is clear that 
\[
s_{n+1}^{n+1}=z^{n^2-1}\cdot s^{n+1},\quad s_0 f(s_1,\cdots,s_n)= f(y_1,\cdots,y_n)z^{n^2}\cdot s^{n+1}.
\]
Both terms are considered inside $R_{n(n+1),n}=H^0(\bP^{n-1},\cO_{\bP^{n-1}}(n(n+1)M))\cdot s^{n+1}$.
Meanwhile, $z^{n^2-1}$ is identified with a section of $\cO_{\bP^{n-1}}((n+1)\lfloor nM \rfloor)$ where $n(n+1)M=(n+1)\lfloor nM \rfloor+S$. Hence we may assume that $-f(y_1,\cdots,y_n)z$ is the canonical section of $\cO_{\bP^{n-1}}(S)$ using which the product structure of $R$ is defined. Therefore, 
\[
s_{n+1}^{n+1}=z^{n^2-1}(-f(y_1,\cdots,y_n)z)\cdot s^{n+1}=-s_0 f(s_1,\cdots,s_n).
\]

So far we have constructed a morphism $\phi:X_0\to X_0'$. It remains to show that $\phi$ is an isomorphism. Since $L$ is ample, $\phi$ is finite. We also know that $(\cO_{X_0'}(1)^n)=n+1$ and 
\[
(L^n)=(n+1)^n(\bP^{n-1}_\infty)^n=(n+1)^n(M^{n-1})=n+1, 
\]
hence $\phi$ is birational. It is easy to check that the singular locus of $X_0'$ is the point $[1:0:\cdots:0]$ union $(x_0=x_{n+1}=f=0)$, hence $X_0'$ is normal. Thus the Zariski main theorem implies that $\phi$ is an isomorphism. It is clear from the construction that $\phi^*(x_0=x_{n+1}=0)=\bP^{n-1}_\infty$ as $\phi^*(x_0=0)=(s^{n+1}=0)=(n+1)\bP^{n-1}_\infty$. We finish the proof.
\end{proof}

The strictly K-semistable examples as referred to in Theorem \ref{thm:main} are now obtained by deforming the above K-polystable examples.

\begin{prop} \label{prop:strict K-ss}
Let $n=4$ or $n\ge 7$. Let $f(x_1,\cdots,x_n)$ and $g(x_1,\cdots,x_{n+1})$ be general homogeneous polynomials of degree $n$ and $n+1$ respectively. Let $X\subseteq\bP^{n+1}$ be the hypersurface defined by the equation $(x_0f+g=0)$. Then $\alpha(X)=\frac{n}{n+1}$ and $X$ is strictly K-semistable. In addition, $X$ has a unique singular point at $[1:0:\cdots:0]$.
\end{prop}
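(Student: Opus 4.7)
The plan is to realize $X$ as the general fiber of a special test configuration with central fiber $X_0$ (from Lemma \ref{lem:X_0}) and vanishing generalized Futaki invariant, use this to deduce that $X$ is strictly K-semistable, and then establish $\alpha(X)=n/(n+1)$ by combining the weakly exceptional structure at the singular point with a Pukhlikov-type hypertangent bound at smooth points. Uniqueness of the singular point will follow from a direct Jacobian dimension count.

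To build the test configuration I would write $g=\sum_{k=0}^{n+1}h_k(x_1,\ldots,x_n)\,x_{n+1}^k$ with $h_k$ homogeneous of degree $n+1-k$, and define $\mathcal{X}\subseteq\bP^{n+1}\times\bA^1$ by $x_0 f+\sum_{k=0}^{n+1}t^{n+1-k}h_k x_{n+1}^k=0$, polarized by $\mathcal{L}=\mathcal{O}_{\bP^{n+1}\times\bA^1}(1)|_{\mathcal{X}}$. The $\bG_m$-action $\mu\cdot([x_0{:}\cdots{:}x_{n+1}],t)=([\mu^{n+1}x_0{:}x_1{:}\cdots{:}x_n{:}\mu x_{n+1}],\mu t)$ preserves $\mathcal{X}$ by a direct check, and $(\mathcal{X};\mathcal{L})$ is a special test configuration of $X\cong\mathcal{X}_1$ whose central fiber $\mathcal{X}_0=(x_0 f+h_{n+1}x_{n+1}^{n+1}=0)$ is isomorphic to $X_0$ through the embedding in Lemma \ref{lem:X_0} (since $h_{n+1}\ne 0$ for generic $g$). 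Because the induced $\bG_m$-action on $X_0$ is the natural action from the projective orbifold cone structure and $X_0$ is K-polystable by Lemma \ref{lem:X_0}, the Donaldson-Futaki invariant of this one-parameter subgroup vanishes, so $\mathrm{Fut}(\mathcal{X};\mathcal{L})=0$. For generic $g$ the test configuration is non-product ($X$ has finite automorphism group while $X_0$ carries an extra $\bG_m$-action), forcing $X$ to be non-K-polystable; openness of K-semistability in families \cite{BL18b} applied along $\mathcal{X}\to\bA^1$ then forces K-semistability of $X$, and hence $X$ is strictly K-semistable. In particular $X$ is not K-stable, so Theorem \ref{thm:tian} yields $\alpha(X)\le n/(n+1)$.

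For the reverse inequality $\alpha(X)\ge n/(n+1)$, I would show $\lct_x(X;|H|_\bQ)\ge n/(n+1)$ at every $x\in X$. Near $p=[1{:}0{:}\cdots{:}0]$, the chart $\{x_0=1\}$ gives $X$ the local equation $f(x_1,\ldots,x_n)+g(x_1,\ldots,x_{n+1})=0$, and the initial form under the weighted blowup with weights $(n+1,\ldots,n+1,n)$ is a nonzero scalar multiple of $f+x_{n+1}^{n+1}$ for generic $g$; so by Corollary \ref{cor:weakly exceptional} the singularity at $p$ is weakly exceptional with Koll\'ar component $E\cong\bP^{n-1}$, different $\Delta_E=\tfrac{n}{n+1}S$, and $\alpha(E,\Delta_E)\ge 1$ by Corollary \ref{cor:CY hypersurf}. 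For any effective $D\sim_\bQ-K_X$, the pseudoeffective threshold gives $\mathrm{ord}_E(D)\le\tau(E)=n+1$; setting $c=n/\mathrm{ord}_E(D)\ge n/(n+1)$, the class computations $-E|_E\sim_\bQ\tfrac{1}{n+1}H_E$ and $-K_E-\Delta_E\sim_\bQ\tfrac{n}{n+1}H_E$ give $c\tilde D|_E\sim_\bQ-K_E-\Delta_E$, so $(E,\Delta_E+c\tilde D|_E)$ is lc by $\alpha(E,\Delta_E)\ge 1$, and inversion of adjunction on the plt blowup yields log canonicity of $(X,cD)$ at $p$, i.e., $\lct_p(X;D)\ge c\ge n/(n+1)$. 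At each smooth point $x\ne p$ I apply Lemma \ref{lem:lct bound} locally with $m=2$: the hypertangent slopes of a degree $n+1$ hypersurface are $\beta_i=(i+1)/i$ for $1\le i\le n-2$ and $\beta_{n-1}=1$, so $\prod_{i\ge 1}\beta_i=n-1$ and $\tfrac{2}{\deg X}\prod_{i\ne 2}\beta_i=\tfrac{4(n-1)}{3(n+1)}\ge\tfrac{n}{n+1}$ for $n\ge 4$. The required P-regularity and $1$-strong P-regularity at every smooth point of $X$ hold for generic $(f,g)$ by a dimension count modeled on Lemmas \ref{lem:general cpi P-regular}--\ref{lem:cone cpi P-regular}.

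Finally, $X$ has a unique singular point at $p$: the Jacobian system $f=0$, $\partial_{x_i}(x_0 f+g)=0$ for $1\le i\le n$, $\partial_{x_{n+1}}g=0$, $x_0 f+g=0$ is satisfied at $p$ automatically, and for generic $(f,g)$ a direct dimension count in $\bP^{n+1}$ shows no other common zero exists, with the behaviour over the hyperplane $\{x_0=0\}$ (where the cone-type condition $\partial_{x_{n+1}}g=0$ enters) separately controlled by genericity of $g$. The main technical obstacle I anticipate is the smooth-point part of the lower bound on $\alpha$: the numerical bound from Lemma \ref{lem:lct bound} meets $n/(n+1)$ exactly at $n=4$, so verifying P-regularity and $1$-strong P-regularity for the specialized family $\{x_0 f+g=0\}$ (which has a prescribed weakly exceptional singularity at $p$ and is thus not a fully general hypersurface in $\bP^{n+1}$) requires a genericity argument tailored to this constrained parameter space rather than a direct appeal to Lemma \ref{lem:general cpi P-regular}.
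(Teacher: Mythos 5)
Your argument for strict K-semistability coincides with the paper's: the same $\bG_m$-degeneration of $X$ to $X_0$, vanishing of the Futaki invariant via K-polystability of $X_0$ (Lemma \ref{lem:X_0}), K-semistability of $X$ by openness, non-isomorphism $X\not\cong X_0$, and $\alpha(X)\le\frac{n}{n+1}$ from Tian's criterion. Your treatment of the singular point $p$ (the weighted blowup of $X$ itself, whose exceptional divisor and different are unchanged by the higher-weight terms of $g$, together with $\tau(E)=n+1$ and adjunction as in Lemma \ref{lem:lct at wexc sing}) is also sound; it is essentially the paper's computation performed on $X$ instead of on $X_0$.

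The genuine gap is the smooth-locus half of $\alpha(X)\ge\frac{n}{n+1}$, which you flag but leave open, and which is not a routine adaptation. Lemma \ref{lem:lct bound} and the genericity results feeding into it are established for smooth complete intersections that are general in their \emph{full} coefficient space: Lemma \ref{lem:general cpi P-regular} needs $n\ge 2r+3$, Corollary \ref{cor:n+1/d} needs $n\ge 5$ and the condition $(I_2)$ from \cite{Puk-product}, and all of these are dimension counts over unconstrained coefficients. Your $X=(x_0f+g=0)$ lies in a thin subfamily (no monomials divisible by $x_0^2$, and the $x_0$-linear part independent of $x_{n+1}$), is singular at $p$, and the hypertangent argument needs P-regularity and $1$-strong P-regularity at \emph{every} smooth point, including points of $(x_0=0)$ and points whose tangent-hyperplane sections pass through $p$ (the irreducibility and degree of $\Bs(\Lambda_{L,1})$ are global statements that see the singular point). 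Neither Lemma \ref{lem:general cpi P-regular} nor Lemma \ref{lem:cone cpi P-regular} (which handles a different constraint and needs much larger $n$) covers this, so the chain of multiplicity-over-degree estimates is unsupported; and for $n=4$, which the proposition includes, even the unconstrained machinery is unavailable (everything cited requires $n\ge5$), while your numerical margin $\frac{4(n-1)}{3(n+1)}=\frac{n}{n+1}$ is exactly zero there. The paper avoids all of this by a different device: it degenerates $D$ to a $\bG_m$-invariant $D_0$ on $X_0$, so every non-lc center either contains the vertex or lies in $V=(x_0=x_{n+1}=0)$, handles those via the Koll\'ar component and adjunction along $V$, and transfers back to $X$ by lower semicontinuity of lct; in the residual case (large coefficient of $V$ in $D_0$) the smooth locus is controlled not by hypertangent divisors but by Pukhlikov's multiplicity bound $\mult_Z D\le 1$ \cite{Puk-mult-bound} combined with \cite[Theorem 1.6]{Z-cpi} for isolated non-lc points. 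To complete your route you would need either to import those two inputs as the paper does, or to actually carry out the constrained-family regularity count and, separately, settle $n=4$.
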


\begin{proof}
Since $f$ and $g$ are general, $X$ has a unique isolated singularity at $x=[1:0:\cdots:0]$ and we may assume that the coefficient of $x_{n+1}^{n+1}$ is nonzero in $g$. Using the one parameter subgroup $[x_0:x_1:\cdots:x_n:x_{n+1}]\mapsto [x_0:t^{n+1} x_1:\cdots:t^{n+1} x_n:t^n x_{n+1}]$ ($t\in \bC^*$) of $\Aut(\bP^{n+1})$, it is not hard to see that $X$ specially degenerates to the K-polystable Fano variety $X_0$ as in Lemma \ref{lem:X_0}. Denote by $(\cX;\cL)$ this special test configuration of $X$.
Since $X_0$ is K-polystable, we know that $\Fut(\cX;\cL)=0$ (it can be viewed as the Futaki invariant of the induced $\bG_m$-action on $X_0$) and $X$ is K-semistable by \cite[Corollary 4]{BL18a}. 
As $X$ is not isomorphic to $X_0$, 
we conclude that $X$ is strictly K-semistable.

We next show that $\alpha(X)\ge \frac{n}{n+1}$. It then follows from \cite[Theorem 1.4]{OS-alpha} and the above consideration that $\alpha(X)=\frac{n}{n+1}$ and $X$ is strictly K-semistable. Let $V=(x_0=x_{n+1}=0)\subseteq X_0$ and let $D\sim_\bQ -K_X\sim H$ be an effective $\bQ$-divisor on $X$ (where $H$ is the hyperplane class). Assume that $D$ degenerates to $D_0\subseteq X_0$ under the above special degeneration of $X$ to $X_0$. Write $D_0=tV+D'$ where $V\not\subseteq \Supp(D')$. We now separate into two cases.

Assume first that $t\le 1$. We claim that $(X_0,\frac{n}{n+1}D_0)$ is lc and it then follows from inversion of adjunction (or the lower semicontinuity of lct) that $(X,\frac{n}{n+1}D)$ is also lc. As $D_0$ is $\bG_m$-invariant by construction, it suffices to check that $(X_0,\frac{n}{n+1}D_0)$ is lc at $x$ and along $V$ (any non-lc center is $\bG_m$-invariant and therefore either contains $x$ or is contained in $V$). Let $\pi:Y_0\rightarrow X_0$ be the weighted blowup as above that extracts the Koll\'ar component $F$ over $x$. Note that $-K_X\sim H$ and the linear system $|\pi^*H-(n+1)F|$ is base point free (its members include the strict transform of $(x_i=0)$ for $i=1,\cdots,n$), hence $\epsilon(F)\ge n+1$; on the other hand, since $(\pi^*H-(n+1)F)^n=0$, we also have $\epsilon(F)\le \tau(F)\le n+1$ by the inequality
\[\left((\pi^*H-(n+1)F)^{n-1}\cdot (\pi^*H-\tau(F)F)\right)\ge 0,\]
hence $\epsilon(F)=\tau(F)=n+1$. As $A_{X_0}(F)=n$, we see that $a(F;X_0,\frac{n}{n+1}D_0)\ge -1$, hence by the following Lemma \ref{lem:lct at wexc sing}, $(X_0,\frac{n}{n+1}D_0)$ is lc at $x$. Similarly, as $K_{X_0}+V+\frac{n}{n+1}D_0\sim_\bQ 0$, $K_V+\Delta_V+\frac{n}{n+1}D'|_V =  (K_{X_0}+V+\frac{n}{n+1}D')|_V$ is anti-nef (where $\Delta_V$ is the different), hence since $(V,\Delta_V)\cong (F,\Delta_F)$, the pair $(X_0,V+\frac{n}{n+1}D')$ is lc along $V$ by the exact same proof of Lemma \ref{lem:lct at wexc sing}. Therefore as $t\le 1$, $(X_0,\frac{n}{n+1}D_0)$ is also lc along $V$, proving our claim.

On the other hand, if $t>1$, then $D'\sim_\bQ cH$ for some $c\le \frac{n}{n+1}$ and by the argument above, $(X_0,D')$ is lc at $x$. Note that $x\not\in V$, it follows that $(X_0,D_0)$ is also lc at $x$ and by inversion of adjunction $(X,D)$ is lc at $x$ as well. If $Z$ is a positive dimensional non-lc center of $(X,D)$, then $Z$ is contained in the smooth locus of $X$ and we have $\mult_Z D>1$ by \cite[3.14.1]{Kol-sing-of-pairs}. But by \cite[Proposition 5]{Puk-mult-bound}, $\mult_Z D\le 1$, a contradiction. Thus the non-lc locus of $(X,D)$ is a union of isolated smooth points. By \cite[Theorem 1.6]{Z-cpi} (with $\Delta=L=0$), we immediately obtain $\lct(X;D)\ge \frac{n}{n+1}$. Therefore, in both cases $(X,\frac{n}{n+1}D)$ is lc. As $D$ is arbitrary, our proof is complete.
\end{proof}

The following lemma is used in the above proof.

\begin{lem} \label{lem:lct at wexc sing}
Let $(X,D)$ be pair and $x\in X$ a weakly exceptional singularity. Let $\pi:Y\rightarrow X$ be the plt blowup that extracts the unique Koll\'ar component $F$ over $x$. Assume that $a(F;X,D)\ge -1$, then $(X,D)$ is lc at $x$.
\end{lem}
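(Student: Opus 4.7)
The plan is to reduce log canonicity of $(X,D)$ at $x$ to log canonicity on the Koll\'ar component $F$ via inversion of adjunction, and then invoke the bound $\alpha(F,\Delta_F) \ge 1$ supplied by Theorem~\ref{thm:alpha and weakly exceptional} (which holds precisely because $x$ is weakly exceptional). Writing $a := a(F; X, D)$, so that $\pi^*(K_X + D) = K_Y + \pi_*^{-1}D + (-a)F$, the pair $(X, D)$ is lc at $x$ if and only if $(Y, \pi_*^{-1}D + (-a)F)$ is lc in a neighborhood of $F$. Since $F$ is $\bQ$-Cartier as a Koll\'ar component and $-a \le 1$ by hypothesis, discrepancies of divisors over $Y$ with respect to $(Y, (-a)F + \pi_*^{-1}D)$ dominate those with respect to $(Y, F + \pi_*^{-1}D)$; hence it suffices to prove the stronger statement that $(Y, F + \pi_*^{-1}D)$ is lc near $F$. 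By the lc version of inversion of adjunction, this reduces to showing $(F, \Delta_F + \pi_*^{-1}D|_F)$ is lc; note that $\pi_*^{-1}D = \pi^*(K_X+D) - K_Y + aF$ is $\bQ$-Cartier (since $(Y,F)$ plt forces $Y$ to be $\bQ$-Gorenstein), so the restriction is well-defined.

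To identify the class of $\pi_*^{-1}D|_F$, restrict the defining identity to $F$: the left hand side vanishes because $F$ is contracted to a point, and combining this with $K_F + \Delta_F = (K_Y + F)|_F = A_X(F)\cdot F|_F$ one finds
\[
\pi_*^{-1}D|_F \;\sim_\bQ\; -(K_F + \Delta_F) + (a+1)\,F|_F \;=\; c\cdot(-(K_F + \Delta_F)), \qquad c := \frac{A_X(F) - 1 - a}{A_X(F)}.
\]
The inequality $c \le 1$ is simply the hypothesis $a \ge -1$, while $c \ge 0$ is forced by effectiveness of $\pi_*^{-1}D|_F$ together with ampleness of $-(K_F + \Delta_F) = A_X(F)(-F|_F)$.

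Finally, the bound $\alpha(F, \Delta_F) \ge 1$ concludes the argument. When $c = 0$, $\pi_*^{-1}D|_F$ is an effective $\bQ$-divisor $\bQ$-linearly trivial on the projective variety $F$, hence zero, and $(F, \Delta_F)$ is klt. When $c > 0$, setting $B := c^{-1}\pi_*^{-1}D|_F$ yields an effective $\bQ$-divisor with $B \sim_\bQ -(K_F + \Delta_F)$, so $(F, \Delta_F + B)$ is lc by the $\alpha$-invariant bound; since $c \le 1$, the smaller pair $(F, \Delta_F + cB) = (F, \Delta_F + \pi_*^{-1}D|_F)$ is lc as well. The main technical input is the lc version of inversion of adjunction (due to Kawakita); beyond that, the argument is a direct log discrepancy calculation on $F$ combined with the weak exceptionality bound.
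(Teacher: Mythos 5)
Your proposal is correct and follows essentially the same route as the paper: pull back crepantly to the plt blowup, use that $\alpha(F,\Delta_F)\ge 1$ (from weak exceptionality) together with the fact that $\Gamma|_F$ is $\bQ$-proportional to $-(K_F+\Delta_F)$ with coefficient at most $1$ (the paper phrases this as $K_F+\Delta_F+\Gamma|_F$ being anti-nef) to get $(F,\Delta_F+\Gamma|_F)$ lc, and then conclude by inversion of adjunction and the comparison with the smaller boundary $(-a)F\le F$. Your version just makes the constant $c$ and the $\bQ$-Cartier bookkeeping explicit; no substantive difference.
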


\begin{proof}
By assumption, we have $K_Y+\Gamma+\lambda F=\pi^*(K_X+D)$ where $\lambda=-a(F;X,D)\le 1$ and $\Gamma$ is the strict transform of $D$. Let $\Delta_F$ be the different on $F$. Since $x\in X$ is weakly exceptional we have $\alpha(F,\Delta_F)\ge 1$. Therefore as $K_F+\Delta_F+\Gamma|_F=(K_Y+\Gamma+F)|_F\sim_\bQ (1-\lambda)F|_F$ is anti-nef, we see that $(F,\Delta_F+\Gamma|_F)$ is lc and then by inversion of adjunction $(Y,\Gamma+F)$ is also lc along $F$. Hence $(X,D)$ is lc at $x$.
\end{proof}

\subsection{K-unstable example}

Modifying the previous strictly K-semistable example, we can also construct, for each fix integer $e\ge 2$, K-unstable Fano varieties $Y$ of dimension $n\gg 0$ such that $\alpha(Y)=1-\frac{1}{n+2-e}$.

Let $e\ge 2$ and let $f(x_0,\cdots,x_n)$, $g(x_0,\cdots,x_{n+1})$ and $h(x_0,\cdots,x_n)$ be general homogeneous polynomials of degrees $e$, $n+2-e$ and $n+1-e$ respectively. Let $Y\subseteq\bP^{n+2}$ be the complete intersection defined by the equation $(f=g+x_{n+2}h=0)$. We now show that $Y$ is K-unstable with alpha invariant $\alpha(Y)=1-\frac{1}{n+2-e}$ in several steps. Let $x=[0:\cdots:0:1]\in \bP^{n+2}$.

\begin{lem} \label{lem:codim 2 wexc}
Assume that $n\ge 11+e$. Then with the above notations, $x\in Y$ is a weakly exceptional singularity.
\end{lem}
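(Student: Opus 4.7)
The plan is to mimic the construction of Corollary \ref{cor:weakly exceptional}: extract a Koll\'ar component $F$ over $x \in Y$ from a carefully chosen weighted blowup, identify the associated log Fano pair $(F, \Delta_F)$ with an explicit model on a Fano hypersurface, and verify $\alpha(F,\Delta_F) \geq 1$ so that weak exceptionality follows from Theorem \ref{thm:alpha and weakly exceptional}.

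In the affine chart $x_{n+2}=1$ with coordinates $z_i=x_i/x_{n+2}$, the germ $(x\in Y)$ is defined by the vanishing of $f(z_0,\ldots,z_n)$ and $g(z_0,\ldots,z_{n+1})+h(z_0,\ldots,z_n)$. Setting $a=n+2-e$ and $b=n+1-e$ so that $a-b=\gcd(a,b)=1$, I would take the weighted blowup of $\bA^{n+2}$ assigning weight $a$ to each of $z_0,\ldots,z_n$ and weight $b$ to $z_{n+1}$. A direct check shows $f$ is quasi-homogeneous of weighted degree $ea$, while the monomials of $g+h$ of lowest weighted degree (equal to $ab$) are $h(z_0,\ldots,z_n)$ and $c\cdot z_{n+1}^a$, where $c\neq 0$ is the coefficient of $x_{n+1}^{n+2-e}$ in $g$, which is nonzero by genericity. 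Intersecting the strict transform of $Y$ with the exceptional divisor $E=\bP(a,\ldots,a,b)$ therefore produces a prime divisor $F$ cut out in $E$ by $f(y_0,\ldots,y_n)=0$ and $y_{n+1}^a+h(y_0,\ldots,y_n)=0$ (after rescaling $c$).

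Next I would use the isomorphism $\bP(a,\ldots,a,b)\cong \bP(1,\ldots,1,b)$ given by $[y_0:\cdots:y_{n+1}]\mapsto[y_0:\cdots:y_n:y_{n+1}^a]$ (which is well-defined since $\gcd(a,b)=1$) to convert the second defining equation into a linear relation in the last coordinate; eliminating it identifies $F$ with the smooth degree-$e$ Fano hypersurface $V:=\{f=0\}\subseteq \bP^n$. Under this identification the divisor $(y_{n+1}=0)|_F$ pulls back to the smooth Calabi-Yau complete intersection $S:=\{f=h=0\}\subseteq \bP^n$ viewed as a divisor on $V$, and the same cyclic-quotient computation as in Corollary \ref{cor:weakly exceptional} (cf.\ \cite[Example 2.4]{K-plt-blowup}) gives $\Delta_F=\tfrac{a-1}{a}S=\tfrac{b}{a}S$; in particular $(\tilde Y,F)$ is plt and $F$ is a genuine Koll\'ar component over $x\in Y$.

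To establish $\alpha(V,\tfrac{b}{a}S)\geq 1$, I would adapt the argument of Corollary \ref{cor:weakly exceptional} almost verbatim. Given an effective $D\sim_\bQ H|_V$, which by convexity may be assumed irreducible, either $D=\tfrac{1}{b}S$ (in which case $(V,\tfrac{b}{a}(S+D))=(V,S)$ is plt hence lc), or $D|_S\sim_\bQ H|_S$ is a well-defined effective divisor on $S$. In the latter case, Lemma \ref{lem:large index} applied to $V$ (of index $b=n+1-e\geq 2$) gives $\lct(V;|H|_\bQ)=1$, and Corollary \ref{cor:CY cpi} applied to $S$ gives $\lct(S;|H|_\bQ)=1$: the P-regularity of $S$ follows from Lemma \ref{lem:cone cpi P-regular} with $s=0$, and the crucial bound $d_r=n+1-e\geq 12$ is precisely the hypothesis $n\geq 11+e$. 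Inversion of adjunction then yields that $(V,S+\tfrac{b-1}{b}D)$ is lc, and the convex combination
\[\tfrac{b}{a}(S+D)=\tfrac{b}{a}\bigl(S+\tfrac{b-1}{b}D\bigr)+\tfrac{1}{a}D\]
(whose coefficients $\tfrac{b}{a}$ and $\tfrac{1}{a}$ sum to $1$ since $a=b+1$) shows $(V,\tfrac{b}{a}(S+D))$ is lc. The main obstacle in the plan is the explicit identification of $(F,\Delta_F)$ with $(V,\tfrac{b}{a}S)$, which requires a careful local analysis of the weighted blowup charts to pin down both the variety structure of $F$ and the precise coefficient $\tfrac{a-1}{a}$ of the different; once this is settled, the lct estimates and the inversion-of-adjunction computation are direct adaptations of the hypersurface case.
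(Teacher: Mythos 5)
Your proposal is correct and follows essentially the same route as the paper: the same weighted blowup with weights $(n+2-e,\ldots,n+2-e,n+1-e)$, the same identification of $(F,\Delta_F)$ with $\bigl((f=0)\subseteq\bP^n,\ \tfrac{n+1-e}{n+2-e}S\bigr)$ where $S=(f=h=0)$, and the same lct inputs (P-regularity of the general Calabi--Yau complete intersection $S$ together with Corollary \ref{cor:CY cpi}, $\lct=1$ on the ambient Fano hypersurface, inversion of adjunction, and finally Theorem \ref{thm:alpha and weakly exceptional}). The only cosmetic differences are that the paper invokes Cheltsov's theorem for $\lct(F;|H|_\bQ)=1$ where you use the internal Lemma \ref{lem:large index}, and the paper concludes $(F,S+D)$ is lc directly rather than passing through your convex combination with coefficient $\tfrac{b-1}{b}$.
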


\begin{proof}
In local coordinates, $x\in Y$ is given by the equation $(f=g+h=0)\subseteq \bA^{n+2}_{x_0,\cdots,x_{n+1}}$. Let $\pi:W\rightarrow Y$ be the weighted blowup at $x$ with weights $(w_0,\cdots,w_{n+1})=(n+2-e,\cdots,n+2-e,n+1-e)$ and let $F$ be the exceptional divisor. Also let $\Delta_F$ be the different. Since $g$ is general, we may assume that the coefficient of $x_{n+1}^{n+2-e}$ is nonzero in $g$. We then have \[F=\left((f=h+x_{n+1}=0)\subseteq \bP(1^{n+1},n+1-e)\right)\cong \left((f=0)\subseteq\bP^n\right)\]
and $\Delta_F=(1-\frac{1}{n+2-e})S$ where $S=(f=h=0)\subseteq \bP^n$ and it suffices to prove that $\alpha(F,\Delta_F)\ge 1$. To see this, let $D\sim_\bQ -(K_F+\Delta_F)\sim_\bQ (1-\frac{1}{n+2-e})H$ be an effective divisor on $F$ (where $H$ is the hyperplane class). Similar to the proof of Corollary \ref{cor:weakly exceptional}, we may assume that $D$ is irreducible and not supported on $S$. Note that $S$ is a Calabi-Yau complete intersection of codimension $2$, hence by Lemma \ref{lem:general cpi P-regular}, Corollary \ref{cor:CY cpi} and our assumption that $n\ge 11+e$, we have $(S,D|_S)$ is lc. As $(F,D)$ is lc by \cite[Theorem 1.3]{C-hypersurface-lct}, we see that $(F,S+D)$ is also lc by inversion of adjunction. It follows that $(F,\Delta_F+D)$ is lc as well. Since $D$ is arbitrary, we obtain $\alpha(F,\Delta_F)\ge 1$.
\end{proof}

\begin{prop} \label{prop:k-unstable}
Notation as above. Then $Y$ is not K-semistable and $\alpha(Y)=1-\frac{1}{n+2-e}$ when $n\ge 10+e^2$.
\end{prop}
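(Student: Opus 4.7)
The proof naturally splits into the K-instability claim and the $\alpha$-invariant computation; throughout, $F$ denotes the Koll\'ar component at $x$ extracted by the weighted blowup in Lemma \ref{lem:codim 2 wexc}.

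For K-instability, I would apply Theorem \ref{thm:beta-criterion} by showing $\beta(F)<0$. From the weighted blowup one gets $A_Y(F)=n+1-e$ via adjunction, and Bezout gives $(-K_Y)^n=e(n+2-e)$. To compute $\vol(-K_Y-tF)$, I would use the one-parameter subgroup of $\Aut(\bP^{n+2})$ acting with weights $(n+2-e,\ldots,n+2-e,n+1-e,0)$ on $(x_0,\ldots,x_{n+2})$ to specially degenerate $Y$ to the orbifold cone
\[Y_0:=(f=0,\;c\,x_{n+1}^{n+2-e}+x_{n+2}h=0)\subseteq\bP^{n+2},\]
where $c\neq 0$ is the coefficient of $x_{n+1}^{n+2-e}$ in $g$, nonzero by generality. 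This $Y_0$ is isomorphic to $C_p(F,\tfrac{1}{n+2-e}H|_F)$, and under the degeneration $\ord_F$ on $Y$ and $\ord_{F_0}$ at the vertex of $Y_0$ induce the same bigraded filtration (up to associated graded) on the anti-canonical section ring, so their volumes agree. Via the cone decomposition $H^0(Y_0,mH)=\bigoplus_{i=0}^{m(n+2-e)}H^0(F,\lfloor i/(n+2-e)\rfloor H|_F)$ (with $\ord_{F_0}$ reading off $i$) and $\dim H^0(F,jH|_F)\sim ej^{n-1}/(n-1)!$, an asymptotic summation gives $\vol(-K_Y-tF)=\tfrac{e}{(n+2-e)^{n-1}}\bigl[(n+2-e)^n-t^n\bigr]$ for $t\in[0,n+2-e]$. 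Integrating yields $\beta(F)=\tfrac{e(n+2-e)(1-e)}{n+1}<0$ whenever $e\geq 2$.

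For $\alpha(Y)$, the upper bound $\alpha(Y)\leq\tfrac{n+1-e}{n+2-e}$ is immediate from $\lct(Y;(x_0=0)\cap Y)\leq A_Y(F)/\ord_F(x_0)$. For the matching lower bound I would mirror the proof of Proposition \ref{prop:strict K-ss}. Take any effective $D\sim_\bQ-K_Y$ with $\bG_m$-limit $D_0=tV_\infty+D'$ on $Y_0$, where $V_\infty$ is the infinity section and $V_\infty\not\subseteq\Supp D'$. If $t\leq 1$, I would show that $(Y_0,\tfrac{n+1-e}{n+2-e}D_0)$ is lc: at $x$ by Lemma \ref{lem:lct at wexc sing} (using $\ord_{F_0}(D_0)\leq\tau(F_0)=n+2-e$ to force $A_{Y_0}(F_0)-\tfrac{n+1-e}{n+2-e}\ord_{F_0}(D_0)\geq 0$), and along $V_\infty$ by inversion of adjunction using $(V_\infty,\Delta_{V_\infty})\cong(F,\Delta_F)$ and $\alpha(F,\Delta_F)\geq 1$ from Lemma \ref{lem:codim 2 wexc}; lower semicontinuity of lct then transfers this to $Y$. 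If $t>1$, then $D'\sim_\bQ cH$ with $c\leq\tfrac{n+1-e}{n+2-e}$, so the same weakly-exceptional estimate gives $(Y_0,D')$ lc at $x$, hence $(Y_0,D_0)$ lc at $x$ (since $x\notin V_\infty$), hence $(Y,D)$ lc at $x$ by semicontinuity; positive-dimensional non-lc centers of $(Y,\tfrac{n+1-e}{n+2-e}D)$ in the smooth locus are excluded by a multiplicity bound $\mult_Z D\leq 1$ (the codimension-$2$ analog of \cite[Proposition 5]{Puk-mult-bound}), and isolated smooth non-lc points are ruled out via \cite[Theorem 1.6]{Z-cpi}-type local lct estimates, in direct parallel with the template of Proposition \ref{prop:strict K-ss}.

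The principal technical obstacle lies in extending the hypersurface multiplicity/local-lct results \cite[Proposition 5]{Puk-mult-bound} and \cite[Theorem 1.6]{Z-cpi} to the codimension-$2$ complete intersection $Y$; for $n\gg 0$ this should follow from the hypertangent divisor technique of Section \ref{sec:lct-hyp} applied at smooth points of $Y$, together with the regularity of $Y$ away from $x$ guaranteed by Lemma \ref{lem:cone cpi P-regular}.
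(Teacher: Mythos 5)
Your first half (K-instability) is essentially correct: $A_Y(F)=n+1-e$, $(-K_Y)^n=e(n+2-e)$, and your volume formula agrees with the paper's, so $\beta(F)<0$ for $e\ge 2$. The paper gets the volume more directly: since $|\pi^*(-K_Y)-(n+2-e)F|$ is base point free and $\left((\pi^*(-K_Y)-(n+2-e)F)^n\right)=0$, one has $\epsilon(F)=\tau(F)=n+2-e$, so $\vol(-\pi^*K_Y-tF)=((-K_Y)^n)+(-t)^n(F^n)$ on $[0,n+2-e]$ is just a top intersection number; your detour through the degeneration to the orbifold cone and an asymptotic section count is workable but needs the (standard, yet unproved here) fact that the filtration volumes are preserved under the initial-term degeneration.

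The second half is where there is a genuine gap. You mirror the proof of Proposition \ref{prop:strict K-ss}, and the step that carried that proof in the $t>1$ case — excluding positive-dimensional non-lc centers via $\mult_Z D\le 1$ — is cited there from \cite[Proposition 5]{Puk-mult-bound}, a statement about \emph{hypersurfaces}. For the codimension-two complete intersection $Y$ (whose first defining equation $f$ is moreover a cone, so $Y$ is not a general complete intersection) no such bound is established in the paper or in your argument; you acknowledge this and say it ``should follow'' from Section \ref{sec:lct-hyp}, but that section proves lct bounds for ($m$-strongly) P-regular complete intersections, not multiplicity bounds along positive-dimensional subvarieties, so this is precisely the missing ingredient rather than a routine adaptation. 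The paper's actual proof is built to avoid this entirely: for any $D\sim_\bQ -K_Y$ not containing the hyperplane section $Z=(x_{n+2}=0)\cap Y=(f=g=0)\subseteq\bP^{n+1}$, it uses Lemma \ref{lem:cone cpi P-regular} and Corollary \ref{cor:codim 2 CY} (this is exactly where the hypothesis $n\ge 10+e^2$ enters) to get $\lct(Z;|H|_\bQ)=1$, hence $(Z,D|_Z)$ lc, hence by inversion of adjunction $(Y,Z+D)$ is lc near $Z$; since every positive-dimensional subvariety of $Y$ meets the ample divisor $Z$, the non-lc locus of $(Y,D)$ consists of finitely many points, which are handled by \cite[Theorem 1.6]{Z-cpi} on the smooth locus and by Lemmas \ref{lem:codim 2 wexc} and \ref{lem:lct at wexc sing} at $x$ (using $\ord_F(D)\le\tau(F)=n+2-e$, so $A_Y(F)-\alpha\,\ord_F(D)\ge 0$). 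Note also that your route never uses $n\ge 10+e^2$ beyond $n\ge 11+e$, which is a sign that the key input tied to that hypothesis (Corollary \ref{cor:codim 2 CY} applied to $Z$) is missing; the $\bG_m$-degeneration and the $t\le 1$ versus $t>1$ case analysis are unnecessary here and only reintroduce the difficulty the paper's argument was designed to bypass.
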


\begin{proof}
Let $\pi:W\rightarrow Y$ be the weighted blowup that extracts the Koll\'ar component $F$ over $x$ as in the previous proof. It is not hard to see that the linear system $|\pi^*(-K_Y)-(n+2-e)F|$ is base point free (consider strict transforms of the hyperplanes $(x_i=0)$ for $0\le i\le n$) and $(\pi^*H-(n+2-e)F)^n)=0$, hence $\epsilon(F)=\tau(F)=n+2-e$ by a similar argument as in Propisition \ref{prop:strict K-ss}. We then have $\vol_Y(-\pi^*K_Y-tF)=((-K_Y)^n)+(-t)^n(F^n)$ for all $0\le t\le \tau(F)$ and a straightforward computation yields
\begin{eqnarray*}
    \beta(F) & = & A_Y(F)-\frac{1}{((-K_Y)^n)}\int_0^{\tau(F)} \vol(-\pi^*K_Y-tF)    \mathrm{d}t \\
     & = & (n+1-e)-\frac{n}{n+1}\cdot (n+2-e) < 0
\end{eqnarray*}
as $e\ge 2$, hence $Y$ is not K-semistable by \cite[Corollary 1.5]{Fujita-valuative-criterion} or \cite[Theorem 3.7]{Li-minimize}.

Let $\alpha=1-\frac{1}{n+2-e}$ and assume that $n\ge 10+e^2>11+e$. We now show that $\alpha(Y)=\alpha$. Let $F$ be the Koll\'ar component over $x$. From the previous proof we have $\tau(F)=n+2-e$ and $A(F)=n+1-e$, hence $\alpha(Y)\le \alpha$. It remains to show that $(Y,\alpha D)$ is lc for any effective divisor $D\sim_\bQ -K_Y$. For this we may assume that $D$ doesn't contain the hyperplane section $Z=(x_{n+2}=0)\cap Y=(f=g=0)\subseteq \bP^{n+1}$ in its support (since $(Y,Z)$ is lc and being lc is preserved under convex linear combination). Since $f$ and $g$ are general and $n\ge 10+e^2$, by Lemma \ref{lem:cone cpi P-regular} and Corollary \ref{cor:codim 2 CY} we have $\lct(Z;|H|_\bQ)=1$ and in particular, $(Z,D|_Z)$ ic lc. By inversion of adjunction, $(Y,Z+D)$ is lc in a neighbourhood of $Z$ and hence the non-lc center of $(Y,D)$ is a finite union of isolated point. Therefore by \cite[Theorem 1.6]{Z-cpi} (with $\Delta=L=0$) we see that $(Y,\frac{n}{n+1}D)$ is lc over the smooth locus of $Y$. Note that $\alpha<\frac{n}{n+1}$, so it remains to check that $(Y,\alpha D)$ is lc at $x$. As $A_{Y,\alpha D}(Y)\ge A_Y(F)-\alpha\cdot \tau(F)=0$, this follows from Lemma \ref{lem:codim 2 wexc} and Lemma \ref{lem:lct at wexc sing} and we are done.
\end{proof}

\begin{rem}
Here we describe a K-polystable replacement of $Y$.
Assume $n\geq 20$.  Let $\pi:\cX\to C$ be a family of codimension two complete intersections of $\bP^{n+2}$ of degree $(e, n+2-e)$ over a smooth pointed curve $0\in C$, such that the special fiber $\cX_0\cong Y$ and $\pi$ is smooth over $C\setminus\{0\}$. Then by \cite[Theorem 1.3]{Z-cpi} we know that $\cX_t$ is a K-stable Fano manifold for any $t\in C\setminus\{0\}$. Thus by \cite{DS14, CDS15, Tia15}, after a finite base change $(0'\in C')\to (0\in C)$ if necessary, there exists a $\bQ$-Gorenstein flat projective family $\cX'_{C'}\to C'$ such that $\cX'_{0'}$ is a K-polystable $\bQ$-Fano variety and $\cX'_{C'}\setminus \cX'_{0'}\cong \cX_{C'}\setminus \cX_{0'}$ where $\cX_{C'}:=\cX\times_{C}C'$. Since $\cX_{0'}\cong 
Y$ is K-unstable, we know that $\cX'_{0'}$ is not isomorphic to $\cX_{0'}$. Hence Proposition \ref{prop:alpha sum>1} and Proposition \ref{prop:k-unstable} implies that $\alpha(\cX'_{0'})\leq 1-\alpha(\cX_{0'})=\frac{1}{n+2-e}$. In particular, when $e=2$ we obtain a K-polystable $\bQ$-Fano variety $\cX'_{0'}$ satisfying $\alpha(\cX'_{0'})\leq \frac{1}{n}$.
\end{rem}

\begin{proof}[Proof of Theorem \ref{thm:main}]
The existence of $X$ follows from Proposition \ref{prop:strict K-ss} and the existence of $Y$ follows from Proposition \ref{prop:k-unstable} with $e=2$. The fact that $X$ and $Y$ are both Gorenstein canonical and have a unique singular point follows from a straightforward computation.
\end{proof}

We conclude with the following question (c.f. \cite[Conjecture 1.6]{Jia17}). 

\begin{que}
Let $n=\dim X\geq 2$ be an integer. 
\begin{enumerate}
    \item Does there exist a K-semistable $\bQ$-Fano variety $X$ such that $\frac{1}{n+1}<\alpha(X)<\frac{1}{n}$?
    \item Does there exist a K-unstable $\bQ$-Fano variety $X$ such that $\frac{n-1}{n}<\alpha(X)<\frac{n}{n+1}$?
\end{enumerate}
\end{que}

\bibliography{ref}
\bibliographystyle{alpha}

\end{document}